\numberwithin{equation}{section}
\theoremstyle{definition}
\newtheorem{defn}{Definition}[section]
\newtheorem{theorem}{Theorem}[section]
\newtheorem{proposition}[theorem]{Proposition}
\newtheorem{lemma}[theorem]{Lemma}
\theoremstyle{definition}
{
\newtheorem{remark}[theorem]{Remark}

}}
\newcommand{\cal}{\mathcal}
\newcommand{\BB}{{\cal B}}
\newcommand{\CC}{{\cal C}}
\newcommand{\FF}{{\cal F}}
\newcommand{\GG}{{\cal G}}
\newcommand{\II}{{\cal I}}
\newcommand{\LL}{{\cal L}}
\newcommand{\RR}{{\cal R}}
\newcommand{\TT}{{\cal T}}
\newcommand{\UU}{{\cal U}}
\newcommand{\VV}{{\cal V}}
\newcommand{\fU}{{\mathfrak U}}
\newcommand{\Cc}{{\mathbb{C}}}
\newcommand{\Ee}{{\mathbb{E}}}
\newcommand{\Ii}{{\mathbb{I}}}
\newcommand{\Nn}{{\mathbb{N}}}
\newcommand{\Rr}{{\mathbb{R}}}
\newcommand{\Tt}{{\mathbb{T}}}
\newcommand{\Zz}{{\mathbb{Z}}}
\def\e{\mathrm{e}}
\def\diag{\operatorname{diag}}
\def\SL{\operatorname{SL}}
\def\SLZ{\SL(d,\Zz)}
\def\SLR{\SL(d,\Rr)}
\def\veck{{\text{$k$}}}
\def\vecv{{\text{$v$}}}
\def\vecw{{\text{$w$}}}
\def\vecx{{\text{$x$}}}
\def\vecalf{{\text{$\alpha$}}}
\def\vecomega{{\text{$\omega$}}}
\newcommand{\id}  {\operatorname{Id}}
\newcommand{\rot}{\operatorname{Rot}}
\newcommand{\te}[1]{\quad\text{#1}\quad}
\begin{document}

\title[]
{Renormalization of Gevrey vector fields with a Brjuno type arithmetical condition}

\author[J Lopes Dias]{Jo\~ao Lopes Dias}
\address{Departamento de Matem\'atica and CEMAPRE, ISEG\\ 
Universidade de Lisboa\\
Rua do Quelhas 6, 1200-781 Lisboa, Portugal}
\email{jldias@iseg.ulisboa.pt}

\author[Gaiv\~ao]{Jos\'e Pedro Gaiv\~ao}
\address{Departamento de Matem\'atica and CEMAPRE, ISEG\\
Universidade de Lisboa\\
Rua do Quelhas 6, 1200-781 Lisboa, Portugal}
\email{jpgaivao@iseg.ulisboa.pt}

\thanks{}
\date{\today}  

\begin{abstract}

We show that in the Gevrey topology, a $d$-torus flow close enough to linear with a unique rotation vector $\omega$ is linearizable as long as $\omega$ satisfies a Brjuno type diophantine condition.
The proof is based on the fast convergence under renormalization of the associated Gevrey vector field.
It requires a multidimensional continued fractions expansion of $\omega$, and the corresponding characterization of the Brjuno type vectors.
This demonstrates that renormalization methods deal very naturally with Gevrey regularity expressed in the decay of Fourier coefficients.
In particular, they provide linearization for frequencies beyond diophantine in non-analytic topologies. 
\end{abstract}
 
\maketitle


\section{Introduction}

The study of quasiperiodic motion yields a remarkable problem where dynamics, number theory and functional analysis meet intrisically.
It consists on the straightening of orbits, hoping that there are invariant sets which are essentially minimal translations with zero Lyapunov exponents.
It turns out that the existence and regularity of the corresponding coordinate change depends deeply on the arithmetical properties of the motion frequency.
This phenomenon relies on the subtile control of Fourier modes which are resonant with respect to the frequency, the so-called small divisors.

Flows on the torus provide one of the simplest but fundamental examples where to tackle small divisors problems.
The same ideas can be extended to more elaborated systems such as the Hamiltonian ones. 
The dimension plays also an important role in the type of results that can be obtained.
Indeed, the Poincar\'e transversal map of the equilibria-free two dimensional torus flow consists in a circle diffeomorphism whose theory was largely developed by Arnold~\cite{Arnold2}, Herman~\cite{Herman1979} and Yoccoz~\cite{Yoccoz2,Yoccoz4} in the real-analytic, smooth and finite regularity classes. 
On the other hand, in higher dimensions many questions remain unanswered besides the case of small perturbations around linear dynamics.
Those questions include the  optimality of the frequency conditions and non-perturbative results.

In this work we study vector fields on the $d$-torus $\Tt^d=\Rr^d/\Zz^d$, $d\geq2$, having Gevrey regularity.
The conjugacy class of a constant vector field $\omega$ depends on the arithmetical properties of $\omega$ and on the considered topology.
It is well-known that for real-analytic vector fields, if $\omega$ satisfies a Brjuno diophantine condition, the topological and real-analytic conjugacy classes coincide in some neighbourhood of $\omega$ (cf.~ \cite{R01}).
This property is known as rigidity, in the sense that the topology implies the geometry of the system.
Here we show that local rigidity also holds for Gevrey vector fields with rotation vector of a particular Brjuno type.
This shows that the diophantine condition is not optimal for Gevrey vector fields as in the smooth case, since the new class of vectors strictly contains all diophantines.

Functions with $s$-Gevrey regularity are, in a sense, an interpolation between real-analytic ($s=1$) and smooth ($s=\infty$) ones.
Their decay of Fourier coefficients behaves like $e^{-\rho|k|^{1/s}}$ where $\rho>0$. For $s=1$ this is the decay for analytic functions on a complex strip of width $\rho$.
A special feature for $s>1$ is that we can construct $s$-Gevrey bump functions.

An $s$-Brjuno vector is defined to be any $\omega\in\Rr^d$ such that
$$
\sum_{n\geq0}
\frac1{2^{n/s}}\max_{0<\|k\|\leq 2^n,k\in\Zz^d}\log\frac1{|k\cdot\omega|} <\infty.
$$
The classical Brjuno condition is given by $s=1$. 

\begin{theorem}\label{main thm I}
Let $s\geq1$.
If an $s$-Gevrey flow on $\Tt^d$ has a unique rotation $s$-Brjuno vector $\vecomega$ and it is $s$-Gevrey-close enough to linear, then it is $s$-Gevrey-conjugate to the torus translation $x\mapsto x+\omega t \bmod\Zz^d$, $t\geq0$.
\end{theorem}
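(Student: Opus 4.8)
The plan is to prove Theorem~\ref{main thm I} by a renormalization scheme adapted to the Gevrey category, following the well-known strategy for analytic vector fields but tracking the Gevrey exponent $s$ carefully. First I would set up the functional-analytic framework: fix a scale of Banach spaces $\mathcal A_\rho$ of $s$-Gevrey vector fields on $\Tt^d$, normed by the weighted sum (or sup) of Fourier coefficients $\sum_k |\hat X(k)| \, e^{\rho |k|^{1/s}}$, and verify the basic algebra/convolution estimates in this norm — namely that a loss of width $\delta$ in $\rho$ converts the $e^{-\rho|k|^{1/s}}$ decay into polynomial factors that are summable, so derivatives and compositions are controlled. The key difference from the analytic case is that $|k|^{1/s}$ is subadditive only up to a constant, $|k+l|^{1/s} \le |k|^{1/s} + |l|^{1/s}$ being replaced by a factor $2^{1-1/s}$ or similar; this must be absorbed into the width losses.

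Next I would construct one renormalization step. Using the multidimensional continued fraction expansion of $\omega$ referenced in the abstract, one obtains a sequence of matrices $T_n \in \mathrm{GL}(d,\Zz)$ (or $\mathrm{SL}(d,\Zz)$) such that the linear change $x \mapsto T_n x$ together with a time rescaling pushes the base frequency $\omega$ toward a fixed direction while expanding the "non-resonant" cone and contracting the resonant modes. The renormalization operator is: apply such a linear coordinate change, rescale time, truncate/eliminate the far modes by the change-of-variables (homological equation), and renormalize. The crucial quantitative input is that the small-divisor losses incurred at step $n$ — measured by $\max_{0<\|k\|\le 2^n} \log 1/|k\cdot\omega|$ — are, after the time rescaling by roughly $2^{n/s}$ coming from the Gevrey weight, exactly summable precisely because $\omega$ is $s$-Brjuno. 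So the $s$-Brjuno condition is what guarantees the product of error amplifications stays bounded.

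Then I would run the iteration: starting from $X_0$ a small $s$-Gevrey perturbation of $\omega$ on $\mathcal A_{\rho_0}$, show inductively that the renormalized vector fields $X_n$ remain defined on widths $\rho_n$ bounded below (the width losses being controlled by a convergent series once the time rescalings and the Brjuno sum are combined), and that the perturbation size $\varepsilon_n = \|X_n - \omega_n\|_{\rho_n}$ decays super-exponentially, $\varepsilon_{n+1} \le C_n \varepsilon_n^2$ with $\sum \log C_n < \infty$ — the standard quadratic Newton-type convergence, where the $C_n$ blow up only like the small-divisor factors. Here one must use uniqueness of the rotation vector to discard the "constant mode" obstruction: the homological equation for torus flows has a solvability condition on the average, and the hypothesis that $\omega$ is the \emph{unique} rotation vector (equivalently, that the vector field has no other invariant constant drift) is exactly what lets us absorb or kill that mode at each step, as in~\cite{R01}. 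Composing the (suitably rescaled) coordinate changes from all steps and showing the infinite composition converges in some $\mathcal A_{\rho_\infty}$, $\rho_\infty>0$, yields the $s$-Gevrey conjugacy to the linear flow $x \mapsto x + \omega t$.

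The main obstacle, as I see it, is the interplay between the time/space rescalings built into renormalization and the Gevrey weight $e^{\rho|k|^{1/s}}$: in the analytic case ($s=1$) the width $\rho$ and the frequency both scale homogeneously, but for $s>1$ the exponent $|k|^{1/s}$ does not transform covariantly under the integer linear maps $T_n$ — one has $\|T_n k\|$ comparable to $\|k\|$ times the operator norm, but raised to the $1/s$ power this distorts the weight in a way that must be shown not to destroy the Gevrey class. Getting a clean estimate $\|T_n k\|^{1/s} \ge \|k\|^{1/s} / (\text{controlled factor})$ uniformly, and checking that the accumulated distortion over all renormalization steps is summable (again tied to the $s$-Brjuno sum, since the matrix norms of $T_n$ are governed by the continued fraction partial quotients which are in turn controlled by $|k\cdot\omega|^{-1}$), is the technical heart of the argument and the place where the precise form of the $s$-Brjuno condition with the $2^{n/s}$ denominators is genuinely needed.
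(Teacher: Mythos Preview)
Your outline matches the paper's strategy closely: Fourier-weighted Gevrey norms $\|f\|=\sum_k|f_k|e^{\rho|k|^{1/s}}$, multidimensional continued fractions driving a sequence of renormalizations $\RR_n=\TT_n\circ\UU_n$, elimination of far-from-resonance modes via a homotopy method, and the $s$-Brjuno condition ensuring the cumulative width losses stay finite. A few points where your picture diverges from what actually happens are worth noting.

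First, your worry about subadditivity is unfounded: for $s\ge1$ the map $t\mapsto t^{1/s}$ is concave, so $|k+l|^{1/s}\le|k|^{1/s}+|l|^{1/s}$ holds exactly and the algebra property of the norm is immediate. Second, the widths $\rho_n$ do not merely stay bounded below, they tend to $+\infty$: the linear rescaling $T_n^{-1}$ contracts resonant modes by a factor $A_n$, and one finds $\rho_n=(\rho-\BB_n)/(\beta^{2n}A_1^{1/s}\cdots A_n^{1/s})$ with the denominator going to zero; this growth of $\rho_n$ is what replaces the ``analyticity improvement'' and is central to the mechanism. Third, the convergence is not driven by a quadratic estimate $\varepsilon_{n+1}\le C_n\varepsilon_n^2$; rather, the paper inserts an explicit width-cutoff factor $\phi_n^{-1}$ (coming from the inclusion $\FF'_\rho\hookrightarrow\FF'_{\rho-\log\phi}$) and \emph{chooses} $\phi_n$ large enough to force $\|X_n-\omega_n\|'_{\rho_n}<\varepsilon_n$, with the $s$-Brjuno sum guaranteeing $\sum\beta^{2i}A_1^{1/s}\cdots A_i^{1/s}\log\phi_i<\infty$.

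Finally, the step where your plan is most likely to run into trouble is the claim that the composed conjugacies converge in some fixed $\mathcal A_{\rho_\infty}$. Composition in the Fourier-weight spaces is awkward; the paper instead passes to the Marco--Sauzin spaces $\CC_{s,\rho}$ (sup-norm on derivatives weighted by $\rho^{s|\alpha|}/\alpha!^s$), where composition is well controlled, proves that $h_n=g_1\circ\cdots\circ g_n$ is Cauchy in $C^\ell$ for every $\ell$ with bounds $\|\partial^\alpha(h-\id)\|_{C^0}\le C'\alpha!^s\eta^{2s\ell}$, and concludes $s$-Gevrey regularity of $h$ from the uniformity in $\ell$ of the constants. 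This two-norm interplay ($\FF_\rho$ for the renormalization dynamics, $\CC_\rho$ for the conjugacy) is a genuine ingredient you are missing.
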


Notice that if a vector field is topologically conjugate to $\omega$, then its rotation vector is unique and equal to $\omega$. Therefore, local rigidity follows from the above theorem.

We show Theorem~\ref{main thm I} using a renormalization method, taking advantage of the multidimensional continued fraction expansion of a vector in the spirit of Lagarias~\cite{Lagarias94} and Cheung~\cite{C11} (cf.~\cite{C13}).
The renormalization acts on the space of Gevrey vector fields and convergence to a trivial limit set implies conjugacy to a constant vector.
Requiring a sufficiently fast convergence rate restricts the class of frequencies, thus determining the $s$-Brjuno condition $\omega$ using continued fractions (see section~\ref{section:Multidimensional continued fractions}).

The above theorem also holds for the related problems of existence of invariant tori in Hamiltonian systems near integrable on $T^*\Tt^d$, including lower dimensional tori, and quasiperiodic linear skew-product flows on $\Tt^d\times \SL(d,\Rr)$. The proofs, to be detailed in a separate publication, are adaptations of the renormalization constructed in this work for Gevrey vector fields as is done in~\cite{jld5,jld9,jld8,jld7,MR2679012,Kocic2007} for the real-analytic class.
Moreover, the equivalent results for the discrete time version of all these systems are also achievable using similar methods.

Carletti and Marmi~\cite{MR1765828} studied the Siegel center problem~\cite{MR0007044} of one-dimensional germs of diffeomorphisms for ultradifferentiable classes including Gevrey.
In particular, they find that the Brjuno condition is sufficient to obtain linearization in this context. 
Other results on quasiperiodic systems in the Gevrey topology and Diophantine frequencies have only been obtained by analytic approximation techniques and using KAM methods ~\cite{MR2835876,MR2684068,MR2104602,MR2684071,MR2525200, MR2317497,MR2257153}, similarly to what is usually done for the finite differentiability case~\cite{MR0380867}. 
It is however a cumbersome strategy, with some obvious limitations when confronted with direct methods.
As shown in this work, the renormalization approach is naturally constructed for the Gevrey case, giving simpler and stronger results as it is capable of dealing with some Liouville frequencies.
Moreover, since the rescaling in the renormalization iteratively increases $\rho$, it avoids a common limitation while working with Gevrey and ultradifferentiable regularities related to estimates for the composition of functions (which have the effect of decreasing $\rho$).

The work of Koch~\cite{Koch} initiated a rigorous construction of renormalization operators on the space of real-analytic vector fields and Hamiltonian functions (cf. \cite{MacKay}). It was later improved by Khanin, Lopes Dias and Marklof~\cite{jld5,jld9} in order to deal with diophantine frequencies (see also \cite{Koch-Kocic06}) by incorporating multidimensional continued fractions.
Renormalization consists on rescaling space and reparametrizing time. Zooming into a region in phase space requires an acceleration of the orbits in order to detect self-similarity, a fixed point (or other simple orbits) of the renormalization.
Such fixed points are vector fields and can be trivial or critical.
The former corresponds to the scope of KAM theory, namely the stability of persistence of invariant tori. The latter is related to invariant tori on the verge of breakup, i.e. at the boundary of the domain of attraction of the trivial points. Evidence of this is harder to obtain, and it is mostly through the help of computer-assisted methods (cf.~\cite{MacKayThesis,Koch2004}).

Standard notations are included in section~\ref{section:Preliminaries} and section~\ref{section:Multidimensional continued fractions} presents the multidimensional continued fractions scheme and the derivation of the set of $s$-Brjuno vectors.
Section~\ref{section:gevrey} is on $s$-Gevrey functions.
Sections~\ref{section:Coordinates} and~\ref{section:Renormalization} define the renormalization operator, and section~\ref{section:Differentiable rigidity} includes the construction of the conjugacy for vector fields which are attracted under renormalization to the orbit of the constant system.

\section{Preliminaries}\label{section:Preliminaries}

We set the notations $\Nn=\{1,2,\dots\}$ for the positive integers and $\Nn_0=\Nn\cup\{0\}$ for the non-negative integers. 
The $\ell_1$-norm on $\Cc^d$ is denoted by
$$
|v|:=\sum_{i=1}^d|v_i|.
$$ 
The canonical inner product between vectors $u,v\in\Cc^d$ is given by 
$$
u\cdot v:=\sum_iu_iv_i
$$ 
and it satisfies
$$
|u\cdot v| \leq |u|\,|v|.
$$

Given a fixed constant $\mu>0$ (whose choice will be motivated later in section~\ref{sec: Sufficient conditions} and it will be defined in~\eqref{eq defn mu}), define the norm
$$
\|v\|_*:=\max\{ \|\hat v\|,|v_d|\}
\te{and}
\|\hat v\|:=\mu \sum_{i=1}^{d-1}|v_i|,
$$
where we use the notations $v=(\hat v,v_d)\in\Cc^d$ with 
$$
\hat v=(v_1,\dots,v_{d-1})\in\Cc^{d-1}.
$$

The above also defines the corresponding norm of a matrix $A=(a_{i,j})$ as the operator norm
\begin{align*}
|A| &= \sup_{|v|=1}|Av|=\max_j\sum_i|a_{i,j}|.\\
\end{align*}
The transpose matrix of $A$ is denoted by $A^\top$ and its inverse (if it exists) is writtten as
$$
A^{-\top}:=(A^\top)^{-1}.
$$
In addition,
$
|A^\top|  \leq d\, |A| .
$

\section{Multidimensional continued fractions}
\label{section:Multidimensional continued fractions}

We introduce here a multidimensional continued fractions expansion of vectors in $\Rr^d$ and its main properties related to renormalization.

\subsection{A special orbit on homogeneous spaces}

Consider the homogeneous space $\Gamma\backslash G$ with $G=\SLR$ and $\Gamma=\SLZ$, the space of $d$-dimensional unimodular lattices.
On its fundamental domain $\FF\subset G$ consider the right action of the one-parameter subgroup
$$
E^t=\diag(\e^{-t}, \dots, \e^{-t}, \e^{(d-1)t}) \in G
$$
that generates the flow
\begin{equation}\label{def geod flow}
\Phi^t\colon \FF \to\FF,
\qquad
M\mapsto \Gamma M E^t,
\end{equation}
This flow is known to be ergodic~\cite{Bekka2000}. 
In the following we will be interested in the properties of one particular orbit.

The size of the shortest non-zero vector in a lattice $M\in \FF$ is given by 
\begin{equation}
\delta\colon  \FF\to\Rr^+,
\qquad
\delta(M)=\inf_{\veck\in\Zz^d\setminus \{0\}} \| k^\top M\|_*.
\end{equation}
Notice that $\delta(\Phi^t (M))=\delta(M E^t)$ and that, due to Minkowski's theorem, there is some universal constant $\delta_0\geq1$ depending only on $d$ and the norm such that
$$
\delta(M)\leq \delta_0, 
\quad
M\in G.
$$

In the following fix $\vecomega=(\vecalf,1)\in\Rr^d$.
As we will see, the forward orbit $\Phi^t(M_0)$, $t\geq0$, of the matrix 
\begin{equation}
M_0=
\begin{pmatrix}
I & \alpha \\
0 & 1
\end{pmatrix}
\end{equation}
will present us many arithmetical properties of the vector $\omega$.
We have,
$$
\delta(\Phi^t(M_0))=\inf_{k\in\Zz^d\setminus\{0\}}
\max\left\{
e^{-t}\|\hat k\|, e^{(d-1)t}|k\cdot\omega|
\right\}.
$$

Define the map
$$
W\colon \Rr^+_0 \to \Rr,
\qquad
W(t)=\log\frac1{\delta(\Phi^t(M_0))}.
$$
So, $W(0)=0$ because $\delta(M_0)=1$.
In addition, $W(t)\geq -\log\delta_0$.

Notice that the function $W$ can be written as
\begin{equation*}
W(t)=
\sup_{k\in\Zz^d\setminus\{0\}} 
\Delta_k(t),
\end{equation*}
where we have the continuous piecewise functions for each $k$,
\begin{equation}\label{def Deltak}
\Delta_k(t)=\min\left\{
t- \log\|\hat{k}\| , -(d-1)t+\log\frac{1}{|k\cdot\omega|}
\right\}.
\end{equation}
The function $W$ is continuous since $\{\Delta_k\}_{k}$ is equicontinuous.

We observe that $\Delta_k(t)\leq t$ for any $t$. Indeed, the only case that is not immediate from~\eqref{def Deltak} is $\Delta_{(0,k_d)}(t)=-(d-1)t-\log|k_d| \leq -(d-1)t\leq t$ because $k_d\not=0$.

Moreover, we can write
$$
W(t)=\sup_{q\in\Nn}\sup_{\|\hat{k}\|=q}\Delta_k(t) =
\sup_{q\in\Nn}\Delta_{p(q)}(t)
$$
where $p(q)\in\Zz^d\setminus\{0\}$ is chosen such that
$$
\|\hat{p}(q)\|=q
\te{and}
|p(q)\cdot\omega| = \min_{\|\hat{k}\|=q}|k\cdot\omega|.
$$

We have  
$$
\Delta_{p(q)}(t)=
\begin{cases}
t- \log q ,& 0\leq t\leq T(q) \\
-(d-1)t+\log\frac{1}{|p(q)\cdot\omega|}, & 
t\geq T(q),
\end{cases}
$$
with 
$$
T(q)=\frac1d\log\frac{q}{|p(q)\cdot\omega|}.
$$

Take the sequence $q_0=1$ and for $n\in\Nn$ 
$$
q_{n}=
\inf
\left\{\|\hat k\|>0\colon k\in\Zz^d\setminus\{0\}, |k\cdot\omega| < |p(q_{n-1})\cdot\omega|
\right \}.
$$
Thus, $W$ is a continuous piecewise affine function with slopes either equal to 1 or $-(d-1)$ given by
$$
W(t)=\Delta_{p_n}(t),
\qquad
\tau_n\leq t\leq \tau_{n+1} 
$$
where $p_n=p(q_n)$ and
\begin{equation}\label{def tau_n}
\tau_n=\frac1d\log\frac{\|\hat p_n\|}{|p_{n-1}\cdot\omega|}.
\end{equation}
The terms in the ordered  sequence $\tau_n$ of the local minimizers of $W$,
$$
\tau_0=0 < \tau_1 < \tau_2 < \dots,
$$
are called \textit{stopping times}. 
Their number can be either finite or infinite.
The local maximizers of $W$ are 
$$
T_n:=T(q_n)=\frac1d\log\frac{\|\hat p_n\|}{|p_n\cdot\omega|}.
$$

Notice that
\begin{equation}\label{eq W - tau_n}
W(\tau_n)=\tau_n-\log \|\hat p_n\|=\frac1d\log\frac{1}{\|\hat p_n\|^{d-1}|p_{n-1}\cdot\omega|}
\end{equation}
and
\begin{equation}\label{eq: tau n+1}
\tau_{n+1}-W(\tau_{n+1})=\tau_n-W(\tau_n) + d(\tau_{n+1}-T_n).
\end{equation}
In addition,
\begin{equation}\label{eq: fn W(t)}
W(t)=\begin{cases}
t-(\tau_n-W(\tau_n)),& \tau_n\leq t\leq T_n\\
t-d(t-T_n)-(\tau_n-W(\tau_n)),& T_n<t\leq \tau_{n+1}.
\end{cases}
\end{equation}
It is also simple to check that
$$
W(t) \leq t-\log (n+1),
\quad
t \geq \tau_{n}
$$ 
for each $n\geq0$ such that $\tau_{n}$ exists.

\begin{lemma}\label{lem: rel pn and small div}
For any $n\in\Nn$,
$$
\|\hat p_n\| \leq \frac{\delta_0^{d/(d-1)}}{|p_{n-1}\cdot\omega|^{1/(d-1)}}.
$$
\end{lemma}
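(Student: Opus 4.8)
The plan is to bound $\|\hat p_n\|$ by evaluating $W$ at the stopping time $\tau_n$ and using the universal Minkowski bound $\delta \le \delta_0$, i.e. $W \ge -\log\delta_0$ everywhere. Recall from the excerpt that
$$
W(\tau_n)=\tau_n-\log\|\hat p_n\|=\frac1d\log\frac{1}{\|\hat p_n\|^{d-1}\,|p_{n-1}\cdot\omega|}.
$$
Since $W(\tau_n)\ge -\log\delta_0$, this gives
$$
\frac1d\log\frac{1}{\|\hat p_n\|^{d-1}\,|p_{n-1}\cdot\omega|}\ge -\log\delta_0,
$$
hence $\|\hat p_n\|^{d-1}\,|p_{n-1}\cdot\omega|\le \delta_0^{d}$, and therefore
$$
\|\hat p_n\|^{d-1}\le \frac{\delta_0^{d}}{|p_{n-1}\cdot\omega|},
\qquad
\|\hat p_n\|\le \frac{\delta_0^{d/(d-1)}}{|p_{n-1}\cdot\omega|^{1/(d-1)}},
$$
which is exactly the claimed inequality.

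First I would recall the identity~\eqref{eq W - tau_n} for $W(\tau_n)$ in terms of $\|\hat p_n\|$ and $|p_{n-1}\cdot\omega|$; this is the bridge between the dynamical quantity $W$ and the arithmetic data. Second I would invoke the lower bound $W(t)\ge -\log\delta_0$ (valid for all $t\ge 0$), which is the consequence of Minkowski's theorem already recorded just after the definition of $\delta$. Third I would combine the two, exponentiate, and take $(d-1)$-th roots to isolate $\|\hat p_n\|$.

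I do not expect a serious obstacle here: the statement is essentially a repackaging of the Minkowski bound through the explicit piecewise-affine description of $W$. The only point requiring a little care is making sure the identity for $W(\tau_n)$ is applied at a genuine stopping time $\tau_n$ (so that $\tau_n$ exists, as hypothesized by "for any $n\in\Nn$" implicitly assuming $\tau_n$ is defined), and that the sign bookkeeping in passing from $W(\tau_n)\ge -\log\delta_0$ to the reciprocal inequality is done correctly. No compositions, no convergence arguments, and no choice of $\mu$ enter — it is a one-line estimate once the right expression for $W(\tau_n)$ is in hand.
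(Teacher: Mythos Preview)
Your proof is correct and uses essentially the same idea as the paper: combine the Minkowski lower bound $W(\tau_n)\ge -\log\delta_0$ with the explicit expression for $W(\tau_n)$ in terms of $\|\hat p_n\|$ and $|p_{n-1}\cdot\omega|$. The paper takes a slightly longer route, re-expressing $W(\tau_n)$ via the identity $W(\tau_n)-W(T_{n-1})=-(d-1)(\tau_n-T_{n-1})$ and then substituting the formulas for $\tau_n$ and $T_{n-1}$, whereas you simply invoke the already-established identity~\eqref{eq W - tau_n} directly; your version is cleaner but the content is identical.
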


\begin{proof}
Recall that $W(\tau_n)\geq -\log \delta_0$ and that the difference between consecutive minima and maxima of $W$ is given by
$$
W(\tau_n)-W(T_{n-1})=-(d-1)(\tau_n-T_{n-1}),
\quad
n\in\Nn.
$$
Thus,
$$
W(T_{n-1})=T_{n-1}-\log\|\hat p_{n-1}\| \geq 
(d-1)(\tau_n-T_{n-1})-\log\delta_0
$$
and, by replacing the formulas of $\tau_n$ and $T_{n-1}$,
$$
\frac{d-1}{d}\log\|\hat p_{n}\| \leq 
\frac{1}{d}\log\frac{1}{|p_{n-1}\cdot\omega|}
+\log\delta_0.
$$
\end{proof}

\begin{proposition}[\cite{jld5}]\label{bnds Mn}
There exist $C_1,C_2>0$ such that for all $t\geq0$
\begin{align*}
\left|\Phi^t (M_0)\right| 
& \leq C_1 e^{(d-1)W(t)}\\
\left|\Phi^t (M_0)^{-1}\right| 
& \leq C_2e^{W(t)}.
\end{align*}
\end{proposition}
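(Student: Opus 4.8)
The plan is to read both estimates off the geometry of the unimodular lattice attached to the point $\Phi^t(M_0)$ in $\GamG$. Fix $t\geq0$ and write $M_t:=\Phi^t(M_0)\in\FF$. Since $\Gamma M_t=\Gamma M_0E^t$, the matrices $M_t$ and $M_0E^t$ generate (through their rows) the same lattice
$$
L_t:=\{k^\top M_t:k\in\Zz^d\}=\{k^\top M_0E^t:k\in\Zz^d\},
$$
which has covolume $|\det M_t|=1$ and whose shortest nonzero vector has length
$$
\lambda_1(L_t)=\delta(\Phi^t(M_0))=\delta(M_0E^t)=e^{-W(t)}.
$$
I would then control $|M_t|$ and $|M_t^{-1}|$ through the successive minima $\lambda_1(L_t)\leq\dots\leq\lambda_d(L_t)$ of $L_t$.

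The first ingredient is reduction theory: taking $\FF$ to be a Siegel/Minkowski reduced fundamental domain, the rows $r_1,\dots,r_d$ of $M_t$ form a reduced basis of $L_t$. Consequently there are constants depending only on $d$ and the norm such that $|r_i|\asymp\lambda_i(L_t)$ and, more importantly, the Gram matrix of $(r_1,\dots,r_d)$ is comparable to its diagonal $\diag(|r_1|^2,\dots,|r_d|^2)$; equivalently the singular values of $M_t$ satisfy $\sigma_j(M_t)\asymp\lambda_{d+1-j}(L_t)$ for $j=1,\dots,d$. Since all norms on $\Mat(d,\Rr)$ are equivalent up to $d$-dependent constants, this already yields
$$
|M_t^{-1}|\asymp\frac{1}{\sigma_d(M_t)}\asymp\frac{1}{\lambda_1(L_t)}=e^{W(t)},
$$
that is, the second estimate with a suitable $C_2$.

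For the first estimate I would use the second one together with Cramer's rule: since $\det M_t=\pm1$, each entry of $M_t$ equals, up to sign, a $(d-1)\times(d-1)$ minor of $M_t^{-1}$, so $|M_t|\lesssim|M_t^{-1}|^{d-1}\lesssim e^{(d-1)W(t)}$. Alternatively, and more symmetrically: by Minkowski's second theorem $\lambda_1(L_t)\cdots\lambda_d(L_t)\asymp1$, and since the minima are increasing this forces $\lambda_d(L_t)\lesssim\lambda_1(L_t)^{-(d-1)}$; combined with $|M_t|\asymp\sigma_1(M_t)\asymp\lambda_d(L_t)$ this gives $|M_t|\lesssim e^{(d-1)W(t)}$. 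All constants produced along the way depend only on $d$ and the norm, hence are uniform in $t\geq0$, which is what is claimed.

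The step I expect to carry the real content is the appeal to reduction theory — namely that the representative lying in the fundamental domain is a reduced basis of its lattice, and therefore that its smallest singular value is comparable to, not merely bounded above by, the first minimum $\lambda_1(L_t)$. The bound $\sigma_d(M_t)\leq\lambda_1(L_t)$ is automatic from $|k^\top M_t|\geq\sigma_d(M_t)|k|$ for $k\in\Zz^d\setminus\{0\}$, but the matching lower bound is exactly where the geometry of numbers enters. Everything else reduces to Minkowski's second theorem and the equivalence of matrix norms, as carried out in \cite{jld5}.
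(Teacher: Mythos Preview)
The paper does not give its own proof of this proposition; it is quoted from \cite{jld5}. Your approach is correct and is, in substance, the argument found there, though the usual phrasing is via Siegel sets and the Iwasawa decomposition rather than successive minima: for $M$ in a Siegel set one writes $M=nak$ with $a=\diag(a_1,\dots,a_d)$, the Siegel conditions bound the ratios $a_i/a_{i+1}$ and the entries of $n$, and one checks that $\delta(M)\asymp\min_i a_i$ while $|M|\asymp\max_i a_i$ and $|M^{-1}|\asymp\max_i a_i^{-1}$; the two claimed bounds then follow from $\prod_i a_i=1$ exactly as you deduce them from Minkowski's second theorem. Your diagnosis of where the content lies --- that membership in the fundamental domain upgrades the automatic inequality $\sigma_d(M_t)\leq\lambda_1(L_t)$ to a two-sided comparability --- is precisely the point, and is equivalent to the bounded orthogonality defect of a reduced basis (or, in the Siegel-set language, to the boundedness of $n$ and of the ratios $a_i/a_{i+1}$). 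The Cramer's-rule shortcut for the first bound from the second is a nice touch and entirely valid.
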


\subsection{Classification of vectors}

Recall that $\omega\in\Rr^d$ is \textit{rationally independent} (also called irrational) if $|k\cdot\omega|>0$ for every $k\in\Zz^d\setminus\{0\}$.
Otherwise it is called rationally dependent.
Moreover, $\omega$ is rationally independent iff 
$\{k\cdot\omega\colon k\in\Zz^d\}$ is dense in $\Rr$.

\begin{proposition}
$\omega$ is rationally independent iff there are infinite stopping times $\tau_n\to+\infty$.
\end{proposition}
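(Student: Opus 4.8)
The plan is to prove both implications, exploiting the inductive construction of the stopping times through the auxiliary sequences $q_n$ and $p_n=p(q_n)$, together with the two–sided control of $W$ at these times that has already been recorded (namely $-\log\delta_0\leq W(\tau_n)$ and $W(t)\leq t-\log(n+1)$ for $t\geq\tau_n$).

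\emph{Rational independence gives infinitely many stopping times tending to $+\infty$.} First I would check that the recursion defining $q_n$ never halts. The infimum defining $q_n$ is taken over $\{\,\|\hat k\|>0\colon k\in\Zz^d\setminus\{0\},\ |k\cdot\omega|<|p_{n-1}\cdot\omega|\,\}$; the quantities $\|\hat k\|$ with $\hat k\neq0$ form a discrete set bounded below by $\mu>0$, so this infimum is attained as soon as the set is nonempty, and it suffices to verify nonemptiness at each step. Since $\omega$ is rationally independent, $|p_{n-1}\cdot\omega|>0$ (note $\hat p_{n-1}\neq0$, hence $p_{n-1}\neq0$), and $\{k\cdot\omega\colon k\in\Zz^d\}$ is dense in $\Rr$; hence there is $k\in\Zz^d$ with $0<|k\cdot\omega|<\min\{1,|p_{n-1}\cdot\omega|\}$. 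Any such $k$ has $\hat k\neq0$, for otherwise $k=(0,k_d)$ with $k_d\neq0$ and then $|k\cdot\omega|=|k_d|\geq1$, a contradiction. Thus $q_n$, and therefore $\tau_n$, is defined for every $n\in\Nn$, so there are infinitely many stopping times. Evaluating $W(t)\leq t-\log(n+1)$ at $t=\tau_n$ and using $W(\tau_n)\geq-\log\delta_0$ gives $\tau_n\geq\log(n+1)-\log\delta_0\to+\infty$.

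\emph{Conversely, by contraposition.} Suppose $\omega$ is rationally dependent and pick $k_0\in\Zz^d\setminus\{0\}$ with $k_0\cdot\omega=0$; since $\omega_d=1$ we necessarily have $\hat k_0\neq0$ (otherwise $k_0=(0,k_{0,d})$ and $k_0\cdot\omega=k_{0,d}\neq0$). Because $k_0\cdot\omega=0$, the second entry in the minimum in~\eqref{def Deltak} is $+\infty$, so $\Delta_{k_0}(t)=t-\log\|\hat k_0\|$ for all $t\geq0$, whence $W(t)\geq t-\log\|\hat k_0\|$. On the other hand, whenever $\tau_n$ exists one has $W(\tau_n)\leq\tau_n-\log(n+1)$. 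Evaluating the lower bound at $t=\tau_n$ and comparing with this upper bound forces $n+1\leq\|\hat k_0\|$, so only finitely many $\tau_n$ exist. Since, as noted above, the mere existence of infinitely many stopping times already forces $\tau_n\to+\infty$, the two implications combine to give the stated equivalence.

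\emph{Main obstacle.} The only genuinely delicate point is showing that the recursion defining the $q_n$ does not terminate in the rationally independent case; this rests on the elementary observation that a sufficiently small nonzero value of $|k\cdot\omega|$ forces $\hat k\neq0$, which is what converts density of $\{k\cdot\omega\colon k\in\Zz^d\}$ into nonemptiness of the relevant index set. Everything else is a direct assembly of estimates for $W$ that are already available.
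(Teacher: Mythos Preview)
Your proof is correct and complete. It differs from the paper's in the forward direction: for ``rationally independent $\Rightarrow$ infinitely many stopping times'', the paper argues the contrapositive by observing that if $W$ has only finitely many local minimizers then it is eventually increasing, hence (being piecewise affine with slopes $1$ or $-(d-1)$) eventually coincides with $t\mapsto t-\log\|\hat k\|$ for some $k$, which forces $|k\cdot\omega|=0$. You instead argue directly, showing that the recursion for $q_n$ never terminates because density of $\{k\cdot\omega:k\in\Zz^d\}$ in $\Rr$ always supplies a $k$ with $0<|k\cdot\omega|<\min\{1,|p_{n-1}\cdot\omega|\}$, and any such $k$ must have $\hat k\neq0$. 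For the reverse implication both proofs take the contrapositive and use $\Delta_{k_0}(t)=t-\log\|\hat k_0\|$ when $k_0\cdot\omega=0$; you quantify the conclusion via $W(\tau_n)\leq\tau_n-\log(n+1)$ to get the explicit bound $n+1\leq\|\hat k_0\|$, while the paper simply notes that an eventually affine $W$ rules out further local minima. Your route has the merit of treating the assertion $\tau_n\to+\infty$ explicitly (the paper's proof leaves this implicit), and it exercises the recorded estimate $W(t)\leq t-\log(n+1)$; the paper's shape-based argument, by contrast, is entirely self-contained and does not invoke that estimate.
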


\begin{proof}
Assume that there is an integer vector $k\not=0$ such that $k\cdot\omega=0$. 
Then, $W(t)=\Delta_k(t)=t-\log\|\hat k\|$ for every $t\geq \log\|\hat k\|$, which eliminates the possibility of infinite local minimizers.

On the other hand, if there are only finite local minimizers, take the largest one $\tau_n$.
Thus,  for $t>\tau_n$ the function $W$ has to be increasing  and thus equal to $t\mapsto t-\log\|\hat k\|$ for some integer vector $k$. That is only possible if $\log(1/|k\cdot\omega|)=+\infty$.
\end{proof}

\begin{lemma}\label{lem: k vs hat k}
If $\omega=(\alpha,1)\in\Rr^d$ and $k\in\Zz^d$, then
\begin{align*}
|k_d| & \leq |\alpha|\,|\hat k| +|k\cdot\omega| \\
\|k\|_* & \leq (|\alpha| + \mu)\,|\hat k| + |k\cdot\omega|.
\end{align*}
\end{lemma}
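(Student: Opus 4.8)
The statement is an immediate consequence of the splitting $\omega=(\alpha,1)$, so the plan is a short direct computation. First I would expand the inner product: writing $k=(\hat k,k_d)$ and $\omega=(\alpha,1)$ with $\alpha\in\Rr^{d-1}$, one has
$$
k\cdot\omega=\hat k\cdot\alpha+k_d .
$$
Solving for $k_d$ gives $k_d=k\cdot\omega-\hat k\cdot\alpha$, and then the triangle inequality together with the estimate $|\hat k\cdot\alpha|\leq|\hat k|\,|\alpha|$ recorded in section~\ref{section:Preliminaries} yields
$$
|k_d|\leq|k\cdot\omega|+|\hat k\cdot\alpha|\leq |\alpha|\,|\hat k|+|k\cdot\omega|,
$$
which is the first inequality.

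For the second inequality I would unravel the definition $\|k\|_*=\max\{\|\hat k\|,|k_d|\}$ with $\|\hat k\|=\mu\sum_{i=1}^{d-1}|k_i|=\mu|\hat k|$. Each of the two quantities inside the maximum is then bounded by the target: on one hand $\|\hat k\|=\mu|\hat k|\leq(|\alpha|+\mu)|\hat k|+|k\cdot\omega|$ trivially, and on the other hand the first inequality gives $|k_d|\leq|\alpha|\,|\hat k|+|k\cdot\omega|\leq(|\alpha|+\mu)|\hat k|+|k\cdot\omega|$. Taking the maximum of the two bounds gives
$$
\|k\|_*\leq(|\alpha|+\mu)\,|\hat k|+|k\cdot\omega|,
$$
as claimed.

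There is no real obstacle here; the only point requiring a little care is bookkeeping of the two norms $\|\cdot\|_*$ and $\|\hat{\cdot}\|$ and the $\ell_1$ convention $|\hat k|=\sum_{i=1}^{d-1}|k_i|$, so that the factor $\mu$ lands in the right place. The lemma will be used later to convert bounds phrased in terms of $\|\hat k\|$ (equivalently $|\hat k|$) into bounds on the full vector $k$ when controlling small divisors $|k\cdot\omega|$.
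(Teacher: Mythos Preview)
Your argument is correct and essentially the same as the paper's: the first inequality is obtained from $|k\cdot\omega|=|\hat k\cdot\alpha+k_d|\geq|k_d|-|\alpha|\,|\hat k|$, and for the second the paper uses the slight variant $\|k\|_*=\max\{\|\hat k\|,|k_d|\}\leq\|\hat k\|+|k_d|=\mu|\hat k|+|k_d|$ and then substitutes the first bound, which is equivalent to your term-by-term bound on the maximum.
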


\begin{proof}
From the relation
$$
|k\cdot\omega| =|\hat k\cdot\alpha+k_d| \geq |k_d|-|\hat k\cdot\alpha|
\geq |k_d|-|\alpha|\,|\hat k|
$$
we obtain the first claim.
Finally,
$$
\|k\|_*=\max\{\|\hat k\|,|k_d|\} \leq \|\hat k\| + |k_d|=\mu|\hat k| + |k_d|.
$$
\end{proof}

Let $a\geq1$ and the sets of integer vectors given by
\begin{align*}
K_a &= \left\{ k\in\Zz^d\colon 0<\|k\|_*\leq a\right\} \\ 
\hat K_a &= \left\{ k\in\Zz^d\colon 0<\|\hat k\|\leq a\right\}.
\end{align*}

\begin{lemma}\label{lemma K hat K}
If $\omega=(\alpha,1)\in\Rr^d$, $a\geq\mu$ and $b=\max\{a,(a+1)|\alpha| \}$, then
$$
\min_{K_b}|k\cdot\omega| \leq 
\min_{\hat K_a}|k\cdot\omega| \leq
\min_{K_a}|k\cdot\omega|.
$$
\end{lemma}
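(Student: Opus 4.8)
The plan is to establish the two inequalities separately, in each case by exhibiting a minimizer and invoking the elementary comparisons of Lemma~\ref{lem: k vs hat k}.

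\emph{Right inequality $\min_{K_a}|k\cdot\omega|\ge\min_{\hat K_a}|k\cdot\omega|$.} The set $K_a$ is finite, so its minimum is attained at some $k^*$. If $\hat k^*\neq0$ then $\|\hat k^*\|\le\|k^*\|_*\le a$, hence $k^*\in\hat K_a$ and nothing more is needed. If instead $\hat k^*=0$, then $k^*=(0,k^*_d)$ with $k^*_d\neq0$, so $\min_{K_a}|k\cdot\omega|=|k^*_d|\ge1$; on the other hand, since $a\ge\mu$, the vector with first block $(1,0,\dots,0)$ and last coordinate a nearest integer to the negative of its pairing with $\alpha$ lies in $\hat K_a$ and satisfies $|k\cdot\omega|\le1/2$. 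Thus $\min_{\hat K_a}|k\cdot\omega|\le1/2<1\le\min_{K_a}|k\cdot\omega|$, and the inequality holds in this case too.

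\emph{Left inequality $\min_{K_b}|k\cdot\omega|\le\min_{\hat K_a}|k\cdot\omega|$.} I would show that a minimizer $k^*$ of $|k\cdot\omega|$ over $\hat K_a$ already belongs to $K_b$. Such a $k^*$ exists (there are finitely many admissible $\hat k$, each with an optimal integer last coordinate), and its last coordinate necessarily realizes $\min_{k_d\in\Zz}|\hat k^*\cdot\alpha+k_d|$, so $|k^*\cdot\omega|\le1$. It then remains to bound $\|k^*\|_*=\max\{\|\hat k^*\|,|k^*_d|\}$ by $b=\max\{a,(a+1)|\alpha|\}$. The first entry obeys $\|\hat k^*\|\le a\le b$ by definition of $\hat K_a$, and the same bound controls the plain $\ell_1$-norm, $|\hat k^*|\le a$ (here the choice of $\mu$ together with $a\ge\mu$ is used). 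For the second entry, Lemma~\ref{lem: k vs hat k} gives $|k^*_d|\le|\alpha|\,|\hat k^*|+|k^*\cdot\omega|\le|\alpha|\,|\hat k^*|+1$, and a short dichotomy finishes the argument: if $|\alpha|\ge1$ then $|k^*_d|\le(|\hat k^*|+1)|\alpha|\le(a+1)|\alpha|$, whereas if $|\alpha|<1$ then $|k^*_d|<|\hat k^*|+1$, so the integrality of $k^*_d$ and $\hat k^*$ forces $|k^*_d|\le|\hat k^*|\le a$. In either case $\|k^*\|_*\le b$, hence $k^*\in K_b$ and $\min_{K_b}|k\cdot\omega|\le|k^*\cdot\omega|=\min_{\hat K_a}|k\cdot\omega|$.

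The single delicate point is the bookkeeping between the weighted norm $\|\hat k\|=\mu\sum_i|k_i|$ defining $\hat K_a$ and the plain $\ell_1$-norm $|\hat k|$ appearing in Lemma~\ref{lem: k vs hat k}: the exact shape of $b$ and the hypothesis $a\ge\mu$ are calibrated so that this transfer, together with the integer-rounding step in the regime $|\alpha|<1$, closes the estimate.
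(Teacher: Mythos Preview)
Your argument is correct and follows the same strategy as the paper: transfer a minimizer from one index set to the other using the comparisons of Lemma~\ref{lem: k vs hat k}. You are in fact more careful than the paper, which asserts the literal inclusions $K_a\subset\hat K_a$ and $\hat K_a\subset K_b$ (neither holds as written---the first fails for $k=(0,k_d)$, the second because $k_d$ is unconstrained in $\hat K_a$), whereas you correctly argue only at the level of minimizers. The one place the paper is slightly shorter: instead of your dichotomy on $|\alpha|$, it first observes $\min_{\hat K_a}|k\cdot\omega|\le|(1,0,\dots,0)\cdot\omega|\le|\alpha|$, and then Lemma~\ref{lem: k vs hat k} yields $|k^*_d|\le|\alpha|\,|\hat k^*|+|\alpha|\le(a+1)|\alpha|$ in one stroke.
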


\begin{proof}
Since $K_a\subset \hat K_a$ the second inequality follows immediately.
Now, notice that $\min_{\hat K_a} |k\cdot\omega|\leq |(1,0,\dots,0)\cdot\omega|\leq|\alpha|$.
Moreover, for any $k\in\hat K_a$, Lemma~\ref{lem: k vs hat k} implies that $|k_d|\leq (a+1)|\alpha|\leq b$.
As $\|\hat k\|\leq a\leq b$ we conclude that $\hat K_a\subset K_b$.
\end{proof}

\subsubsection{$s$-Brjuno vectors}

For $s\geq1$, a vector $\omega\in\Rr^d$ is $s$-Brjuno, i.e. $\omega\in BC(s)$, if 
$$
B_1(s):=\sum_{n\geq0}\frac1{2^{n/s}}\max_{0<\|k\|_*\leq 2^{n}}\log\frac{1}{|k\cdot\omega|} 
< \infty.
$$ 
Notice that the convergence (and divergence) of $B_1$ is independent of the norm used.
It follows that 
$$
BC(s)\subset BC(s')
\te{if}
s\geq s'\geq1.
$$
The case $s=1$ corresponds to the well-known Brjuno contition.
It is also clear that $\omega$ being $s$-Brjuno implies that all its coordinates are non-zero. Also, $\omega$ is $s$-Brjuno iff $c\omega$ is $s$-Brjuno with $c\not=0$, and this class of vectors is $\SL(d,\Zz)$-invariant.

Recall the sequence of vectors $p_n$ that correspond to the local minima of the function $W$ of a vector $\omega=(\alpha,1)$.

\begin{proposition}\label{prop:equivBrjuno}
Let $\omega=(\alpha,1)$ and $s\geq1$.
The following propositions are equivalent:
\begin{enumerate}
\item
$\omega\in BC(s)$.
\item
$$
B_2(s):=\sum_{n\geq0}\frac1{\|\hat{p_n}\|^{1/s}}\log\frac{1}{|p_{n}\cdot\omega|} 
< \infty.
$$ 
\item
$$
B_3(s):=\sum_{n\geq0} e^{-(\tau_n-W(\tau_n))/s}\tau_{n+1} <\infty.
$$
\end{enumerate}
\end{proposition}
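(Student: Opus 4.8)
The strategy is to prove the chain of equivalences $(1)\Leftrightarrow(2)$ and $(2)\Leftrightarrow(3)$, passing through the dyadic description of the small divisors encoded by the function $W$. First I would set up notation: for each $n$ let $\delta_n=|p_n\cdot\omega|$ and recall that the $p_n$ are precisely the successive best approximation vectors (with respect to $\|\hat\cdot\|$) singled out by the definition of $q_n$, so that $\delta_0>\delta_1>\delta_2>\cdots$ and for $q$ with $q_{n}\le q<q_{n+1}$ one has $\min_{\|\hat k\|=q}|k\cdot\omega|=\delta_n$ (by the very construction of the sequence $q_n$ as the jump points where the best divisor improves). Using Lemma~\ref{lemma K hat K} and Lemma~\ref{lem: k vs hat k}, the quantity $\max_{0<\|k\|_*\le a}\log(1/|k\cdot\omega|)$ appearing in $B_1(s)$ is, up to replacing $a$ by a bounded multiple of $a$ (the constants $|\alpha|,\mu$ being fixed), comparable to $\max_{0<\|\hat k\|\le a'}\log(1/|k\cdot\omega|)=\log(1/\delta_{n(a')})$ where $n(a')$ is the largest index with $q_{n(a')}\le a'$. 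Because the sum defining $B_1$ is a geometric-weighted sum over dyadic scales, replacing $a=2^n$ by $Ca$ only changes finitely many terms at the front and multiplies the tail by a bounded factor, hence does not affect convergence; this is where I record the stated norm-independence.

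For $(1)\Leftrightarrow(2)$: group the dyadic index $n$ in $B_1(s)$ according to which ``block'' $[q_m,q_{m+1})$ the scale $2^n$ falls in. On such a block the inner maximum is constant equal to $\log(1/\delta_m)$, and $\sum_{n:\,q_m\le 2^n<q_{m+1}}2^{-n/s}$ is comparable to $\|\hat p_m\|^{-1/s}=q_m^{-1/s}$ (a geometric sum whose leading term has that order, using $q_m\le 2^{n}$ for the smallest such $n$). Summing over $m$ converts $B_1(s)$ into something comparable to $\sum_m q_m^{-1/s}\log(1/\delta_m)=B_2(s)$; one must only be slightly careful when a block is empty (i.e.\ contains no power of $2$), but then $q_{m+1}\le 2q_m$ and the corresponding term is absorbed into the neighbour. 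Conversely the same blocking shows $B_2(s)\lesssim B_1(s)+(\text{finite front terms})$. This gives the two-sided comparison $B_1(s)\asymp B_2(s)$ up to additive constants, hence the equivalence.

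For $(2)\Leftrightarrow(3)$: translate the arithmetic quantities $\|\hat p_n\|$ and $\delta_n$ into the geometry of $W$. From~\eqref{eq W - tau_n} we have $\tau_n-W(\tau_n)=\log\big(\|\hat p_n\|^{(d-1)/d}\,|p_{n-1}\cdot\omega|^{1/d}\big)$, so $e^{-(\tau_n-W(\tau_n))/s}=\big(\|\hat p_n\|^{d-1}\delta_{n-1}\big)^{-1/(ds)}$. Next I would express $\tau_{n+1}$ via~\eqref{def tau_n} as $\frac1d\log(\|\hat p_{n+1}\|/\delta_n)$ and use Lemma~\ref{lem: rel pn and small div}, which gives $\|\hat p_{n+1}\|\le \delta_0^{d/(d-1)}\delta_n^{-1/(d-1)}$, to bound $\tau_{n+1}\le \frac1d\big(\tfrac{d}{d-1}\log\delta_0+\tfrac{d}{d-1}\log(1/\delta_n)\big)=O(\log(1/\delta_n))$; a matching lower bound $\tau_{n+1}\ge \frac1d\log(1/\delta_n)$ is immediate since $\|\hat p_{n+1}\|\ge1$. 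So $\tau_{n+1}\asymp \log(1/\delta_n)$ up to an additive constant, and the product $e^{-(\tau_n-W(\tau_n))/s}\tau_{n+1}$ is comparable to $\big(\|\hat p_n\|^{d-1}\delta_{n-1}\big)^{-1/(ds)}\log(1/\delta_n)$. Finally, using once more Lemma~\ref{lem: rel pn and small div} in the form $\|\hat p_n\|^{d-1}\lesssim \delta_{n-1}^{-1}$ (so $\|\hat p_n\|^{d-1}\delta_{n-1}$ is bounded above) together with the trivial $\|\hat p_n\|^{d-1}\delta_{n-1}\ge \delta_{n-1}$ wait—more carefully, one has the clean comparison $\|\hat p_n\|^{-1/s}\asymp \big(\|\hat p_n\|^{d-1}\delta_{n-1}\big)^{?}$; here I would instead directly observe that $\|\hat p_n\|^{d-1/d\cdot(1/s)}$ and $\delta_{n-1}^{1/(ds)}$ combine so that term-by-term, $e^{-(\tau_n-W(\tau_n))/s}\tau_{n+1}$ and $\|\hat p_n\|^{-1/s}\log(1/\delta_n)$ are each bounded by a constant times a shift of the other, using Lemma~\ref{lem: rel pn and small div} to trade powers of $\|\hat p_n\|$ against powers of $\delta_{n-1}$. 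Summing and absorbing the index shift $n\leftrightarrow n-1$ (harmless for convergence) yields $B_2(s)<\infty\Leftrightarrow B_3(s)<\infty$.

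The main obstacle is the bookkeeping in $(2)\Leftrightarrow(3)$: one is juggling three quantities $\|\hat p_n\|$, $\delta_n=|p_n\cdot\omega|$, $\tau_{n+1}$ that are linked only by the one-sided estimate of Lemma~\ref{lem: rel pn and small div}, so establishing a genuine two-sided comparison of the summands requires combining that lemma with the defining formulas~\eqref{eq W - tau_n} and~\eqref{def tau_n} and being careful that the exponents $1/(d-1)$, $1/d$ match up after dividing by $s$; the logarithmic factor $\tau_{n+1}\asymp\log(1/\delta_n)$ versus $\log(1/\delta_{n})$ in $B_2$ must be aligned by an index shift, which is legitimate precisely because convergence of a series is insensitive to shifting the index by one. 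A secondary point is justifying throughout that passing to a bounded multiple of the dyadic scale and dropping finitely many initial terms never affects convergence, which is why the proposition's ``$<\infty$'' formulation (rather than an explicit value) is the right one to state.
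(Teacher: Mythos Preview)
Your plan for $(1)\Leftrightarrow(2)$ is essentially the paper's: reduce $B_1$ to the analogous sum with $\|\hat k\|$ in place of $\|k\|_*$ via Lemma~\ref{lemma K hat K}, then block the dyadic scales $2^n$ according to the intervals $[q_m,q_{m+1})$ and use that the geometric sum of $2^{-n/s}$ over a block is comparable to $q_m^{-1/s}$. This is correct.

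For $(2)\Leftrightarrow(3)$ you have a genuine arithmetic slip that is derailing the argument. You write
\[
\tau_n-W(\tau_n)=\log\bigl(\|\hat p_n\|^{(d-1)/d}\,|p_{n-1}\cdot\omega|^{1/d}\bigr),
\]
but the \emph{first} equality in~\eqref{eq W - tau_n} reads $W(\tau_n)=\tau_n-\log\|\hat p_n\|$, i.e.
\[
\tau_n-W(\tau_n)=\log\|\hat p_n\|,\qquad e^{-(\tau_n-W(\tau_n))/s}=\|\hat p_n\|^{-1/s}.
\]
With this, $B_3(s)=\sum_{n\ge0}\|\hat p_n\|^{-1/s}\tau_{n+1}$, and the comparison with $B_2(s)$ is immediate: from~\eqref{def tau_n} one has $d\tau_{n+1}=\log\|\hat p_{n+1}\|+\log(1/|p_n\cdot\omega|)$, so $\log(1/|p_n\cdot\omega|)\le d\tau_{n+1}$ gives $B_2\le dB_3$; in the other direction Lemma~\ref{lem: rel pn and small div} bounds $\log\|\hat p_{n+1}\|$ by $\tfrac1{d-1}\log(1/|p_n\cdot\omega|)+\text{const}$, whence $\tau_{n+1}\le \tfrac1{d-1}\log(1/|p_n\cdot\omega|)+\text{const}$ and $B_3\le C\,B_2$. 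This is exactly the paper's route, and no index shift or ``trading powers'' is needed: the $\delta_{n-1}$ you introduced never enters because your formula for $\tau_n-W(\tau_n)$ was incorrect. The uncertainty you flag (``wait---more carefully'') is a symptom of that error; once you use the right identity the bookkeeping disappears.
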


\begin{proof}
By Lemma~\ref{lemma K hat K} we have that $B_1<\infty$ iff 
$$
B_1':=\sum_{n\geq0}\frac1{2^{n/s}}\max_{0<\|\hat k\|\leq 2^{n}}\log\frac{1}{|k\cdot\omega|}  < \infty.
$$

For each $n\in\Nn$ we can find $j_n\in\Nn$ such that 
$$
2^{j_n-1}\leq\|\hat p_n\| \leq 2^{j_n}.
$$
Notice then that $j_0=1$ and
\begin{align*}
\log\frac{1}{|p_n\cdot\omega|}
&= 
\max_{\|\hat k\| = \|\hat p_n\|}\log\frac{1}{|k\cdot\omega|} \\
&\leq
\max_{0<\|\hat k\|\leq 2^{j_n}}\log\frac{1}{|k\cdot\omega|}.
\end{align*}
Thus,
$$
B_2\leq 
\sum_{n\geq0}\frac1{2^{(j_n-1)/s}}\max_{0<\|\hat k\|\leq 2^{j_n}}\log\frac{1}{|k\cdot\omega|}
\leq
2^{1/s}B_1'.
$$

Choose now $i_n\in\Nn$ for each $n\in\Nn$ such that
$$
\|\hat p_{i_n}\|=\max\{\|\hat p_k\|\colon \|\hat p_k\|\leq 2^n,k\in\Nn\}.
$$
So,
\begin{align*}
B_1'
&\leq
\sum_{n\geq0}
\frac1{\|\hat p_{i_n}\|^{1/s}} \max_{\|\hat k\| = \|\hat p_{i_n}\|}\log\frac{1}{|k\cdot\omega|}  \leq
B_2.
\end{align*}

Using Lemma~\ref{lem: rel pn and small div} we get
\begin{align*}
B_3
&=
\frac1d
\sum_{n\geq0} \frac1{\|\hat p_n\|^{1/s}} \log\frac{\|\hat p_{n+1}\|}{|p_n\cdot\omega|} \\
&\leq
\frac1{d-1}\sum_{n\geq0} \frac1{\|\hat p_n\|^{1/s}}
\log\frac{ \delta_0 }{|p_n\cdot\omega|} \\
&=
\frac{1+\xi}{d-1}\sum_{n\geq0} \frac1{\|\hat p_n\|^{1/s}}
\log\frac{ 1}{|p_n\cdot\omega|} \\
&=
\frac{1+\xi}{d-1} B_2,
\end{align*}
where $\xi=-\log\delta_0/\log|p_0\cdot\omega|$ and we have used the fact that $|p_n\cdot\omega|\leq |p_0\cdot\omega|$.

Finally, by~\eqref{def tau_n} and~\eqref{eq W - tau_n}
\begin{align*}
B_2
&=\sum_{n\geq0} \frac1{\|\hat p_n\|^{1/s}} \log\frac{1}{|p_n\cdot\omega|} \\
&=
\sum_{n\geq0} e^{-(\tau_n-W(\tau_n))/s}\left(d\tau_{n+1} -\log\|\hat p_{n+1}\| \right) \\
&\leq 
d B_3.
\end{align*}
\end{proof}

\subsection{Contraction of orthogonal cones}\label{sec:contraction}

Consider any strictly increasing unbounded sequence $t_n>0$, $n\in\Nn$, and set $t_0=0$.
Let 
$$
M_n:=\Phi^{t_n}(M_0)
$$
a sequence of points in the orbit of $M_0$.
This is computed using a matrix $P_n\in\Gamma$ such that $M_n$ is in $\FF$.
That is,
$$
M_n=P_n M_0 E^{t_n}
=\begin{bmatrix}
\hat p^{(1)}_{n} e^{-t_n} & (p^{(1)}_{n}\cdot\omega) e^{(d-1)t_n} \\
\vdots & \vdots\\
\hat p^{(d)}_{n} e^{-t_n} & (p^{(d)}_{n}\cdot\omega) e^{(d-1)t_n}
\end{bmatrix}
$$
where $p^{(i)}_{n}=e_i^\top P_n$ is the $i$-th row of $P_n$ since $e_i$ is the $i$-th vector of the canonical basis of $\Rr^d$.
Moreover, set $P_0=I$.

The last column of $M_n$ is 
$$
\omega_n := M_n e_d =\lambda_n P_n \omega,\quad \lambda_n:= e^{(d-1)t_n}.
$$
Notice that $\omega_0=\omega$.
In addition, we define the matrices
$$
T_n:=P_nP_{n-1}^{-1} \in\Gamma
$$
so that $\omega_n=\eta_n T_n\omega_{n-1}$ with
$$
\eta_n=\frac{\lambda_n}{\lambda_{n-1}}
=e^{(d-1)(t_n-t_{n-1})}
$$
and $P_n=T_n\dots T_1$ for any $n\in\Nn$.

\begin{lemma}\label{mcf estimates}
For every $n\geq1$ the following holds:
\begin{enumerate}
\item
$|M_n|\leq C_1 e^{(d-1)W_n}$,
\item
$|M_n^{-1}|\leq C_2 e^{W_n},$
\item $|\omega_n|\leq C_1|\omega|\,e^{(d-1)W_n}$,
\item $|P_n|\leq C_1|\omega|\, e^{t_n-W_n+dW_n}$,
\item $|P_n^{-1}|\leq C_2|\omega|\, e^{(d-1)(t_n-W_n)+ dW_n}$,
\item $|T_n|\leq C_1C_2\,e^{t_n-W_n-(t_{n-1}-W_{n-1})+d W_n}$,
\item
$|T_n^{-1}|\leq C_1C_2\,e^{(d-1)(t_n-t_{n-1})+(d-1)W_{n-1}+W_n}$,
\end{enumerate}
where $C_1$ and $C_2$ are the constants in Proposition~\ref{bnds Mn} and $W_n=W(t_n)$.
\end{lemma}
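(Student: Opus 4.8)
The idea is that all seven bounds are algebraic consequences of the two estimates in Proposition~\ref{bnds Mn} applied at $t=t_n$, together with the defining relations $M_n=P_nM_0E^{t_n}$, $\omega_n=M_ne_d$, $T_n=P_nP_{n-1}^{-1}$, and the explicit form of $E^t=\diag(e^{-t},\dots,e^{-t},e^{(d-1)t})$. Items (1) and (2) are literally Proposition~\ref{bnds Mn} evaluated at $t_n$, with $W_n=W(t_n)$; there is nothing to prove beyond unwinding the notation $M_n=\Phi^{t_n}(M_0)$.

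For (3), write $\omega_n=M_ne_d$ and use $|\omega_n|\le|M_n|\,|e_d|$ together with $|e_d|=1$; but to get the factor $|\omega|$ one instead uses $\omega_n=\lambda_nP_n\omega$ with $\lambda_n=e^{(d-1)t_n}$, so $|\omega_n|\le\lambda_n|P_n|\,|\omega|$ — this forces us to first obtain (4). For (4), start from $M_n=P_nM_0E^{t_n}$, so $P_n=M_nE^{-t_n}M_0^{-1}$ and hence $|P_n|\le|M_n|\,|E^{-t_n}|\,|M_0^{-1}|$. Here $|E^{-t_n}|=\max\{e^{t_n},e^{-(d-1)t_n}\}=e^{t_n}$ for $t_n\ge0$, and $|M_0^{-1}|$ is a constant that can be absorbed (note $M_0$ depends only on $\alpha$; to match the stated bound one writes the constant as $C_1|\omega|$ up to adjusting $C_1$, or more honestly absorbs $|M_0^{-1}|$ into the universal constants — I would simply track it and note $|M_0^{-1}|\le\text{const}$, then rename). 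Combining with (1) gives $|P_n|\le C_1e^{(d-1)W_n}\cdot e^{t_n}\cdot|M_0^{-1}|$; rewriting $(d-1)W_n+t_n=t_n-W_n+dW_n$ yields the claimed exponent. Then (3) follows: $|\omega_n|\le e^{(d-1)t_n}|P_n|\,|\omega|$, and one must check that $(d-1)t_n+(t_n-W_n+dW_n)$ simplifies correctly against $(d-1)W_n$ — here I would instead derive (3) directly from $|\omega_n|\le|M_n|$ and Proposition~\ref{bnds Mn}, which immediately gives $|\omega_n|\le C_1e^{(d-1)W_n}$, and the harmless extra factor $|\omega|\ge$ some constant only helps. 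Symmetrically, (5) comes from $P_n^{-1}=M_0E^{t_n}M_n^{-1}$, so $|P_n^{-1}|\le|M_0|\,|E^{t_n}|\,|M_n^{-1}|$ with $|E^{t_n}|=e^{(d-1)t_n}$, combined with (2): $|P_n^{-1}|\le|M_0|\,e^{(d-1)t_n}\cdot C_2e^{W_n}$, and $(d-1)t_n+W_n=(d-1)(t_n-W_n)+dW_n$ gives the exponent.

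Finally (6) and (7) are obtained by composition: $T_n=P_nP_{n-1}^{-1}$ gives $|T_n|\le|P_n|\,|P_{n-1}^{-1}|$, so from (4) at index $n$ and (5) at index $n-1$,
\begin{equation*}
|T_n|\le C_1|\omega|\,e^{t_n-W_n+dW_n}\cdot C_2|\omega|\,e^{(d-1)(t_{n-1}-W_{n-1})+dW_{n-1}},
\end{equation*}
and one checks the exponent collapses to $t_n-W_n-(t_{n-1}-W_{n-1})+dW_n$ after noting $(d-1)(t_{n-1}-W_{n-1})+dW_{n-1}=(d-1)t_{n-1}-(d-1)W_{n-1}+dW_{n-1}=(d-1)t_{n-1}+W_{n-1}$; combined with $-W_n$ and the fact that $W_n\le t_n$ one reorganizes to the stated form (the $|\omega|^2$ is absorbed into $C_1C_2$). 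Likewise $T_n^{-1}=P_{n-1}P_n^{-1}$ gives $|T_n^{-1}|\le|P_{n-1}|\,|P_n^{-1}|$, and feeding in (4) at $n-1$ and (5) at $n$ and simplifying the exponents gives (7). Throughout I would use the submultiplicativity $|AB|\le|A|\,|B|$ and the exact operator norms of the diagonal matrices $E^{\pm t}$; the only mild subtlety — and the place where care is needed — is bookkeeping the constant $|M_0^{\pm1}|$ (a function of $\alpha$ alone, hence bounded) so that it can be folded into the universal $C_1,C_2$ of Proposition~\ref{bnds Mn}, and making sure each exponent identity of the form $(d-1)W_n+t_n=t_n-W_n+dW_n$ is used consistently. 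This is the main (and essentially only) obstacle: it is purely arithmetic.
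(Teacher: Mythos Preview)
Your treatment of items (1)--(5) is correct and matches the paper's (terse) argument; in particular $|M_0|=|M_0^{-1}|=|\alpha|+1=|\omega|$, which is why that factor appears in (4) and (5) exactly as stated, not merely ``up to renaming constants''.

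However, there is a genuine gap in your argument for (6) and (7). Bounding $|T_n|\le|P_n|\,|P_{n-1}^{-1}|$ and feeding in (4) and (5) gives exponent
\[
(t_n-W_n+dW_n)+\bigl((d-1)(t_{n-1}-W_{n-1})+dW_{n-1}\bigr)
= t_n+(d-1)W_n+(d-1)t_{n-1}+W_{n-1},
\]
which exceeds the claimed exponent $t_n-W_n-(t_{n-1}-W_{n-1})+dW_n=t_n+(d-1)W_n-t_{n-1}+W_{n-1}$ by exactly $d\,t_{n-1}$. Since $t_{n-1}\to\infty$, this surplus cannot be absorbed into the constants, and the inequality ``$W_n\le t_n$'' does not help: the problematic term is positive. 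The same defect of $e^{d t_{n-1}}$ arises in your bound for (7). So the exponents do \emph{not} collapse as you assert.

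The fix, which is the one hint the paper actually gives, is to cancel $M_0$ \emph{before} applying submultiplicativity: from $P_n=M_nE^{-t_n}M_0^{-1}$ and $P_{n-1}^{-1}=M_0E^{t_{n-1}}M_{n-1}^{-1}$ one gets
\[
T_n=P_nP_{n-1}^{-1}=M_n\,E^{-(t_n-t_{n-1})}\,M_{n-1}^{-1},
\qquad
T_n^{-1}=M_{n-1}\,E^{\,t_n-t_{n-1}}\,M_n^{-1}.
\]
Now $|E^{-(t_n-t_{n-1})}|=e^{t_n-t_{n-1}}$ and $|E^{\,t_n-t_{n-1}}|=e^{(d-1)(t_n-t_{n-1})}$, and combining with (1) and (2) at indices $n$ and $n-1$ yields precisely the stated exponents with constant $C_1C_2$ and no $|\omega|$ factor. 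The moral is that the two occurrences of $M_0^{\pm1}$ in $P_nP_{n-1}^{-1}$ contribute opposite factors of $e^{\pm(d-1)t_{n-1}}$ and $e^{\mp t_{n-1}}$ through $E^{\pm t_{n-1}}$, and taking norms first destroys this cancellation.
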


\begin{proof}
This follows immediately from Proposition~\ref{bnds Mn}.
Notice that $T_n=M_nE^{-(t_n-t_{n-1})}M_{n-1}^{-1}$.
\end{proof}

The hyperbolicity of the matrices $T_n^{-\top}$, $n\geq1$, can be derived by looking at the contraction of the subspace 
$$
S_{n-1}^\perp=
\{ v\in\Rr^d\colon
v\cdot\omega_{n-1}=0 \}
$$
orthogonal to $\omega_{n-1}$. 

Denote by $\hat P_n$ the matrix $P_n$ with zeros on its last column.

\begin{lemma}
If $v\in S_{n-1}^\perp$, then
$$
|T_n^{-\top}v| \leq e^{-t_n} |M_n^{-\top}|\,|\hat P_{n-1}^{\top}| \, |v|.
$$
\end{lemma}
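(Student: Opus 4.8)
I want to bound $|T_n^{-\top}v|$ for $v\in S_{n-1}^\perp$ using the factorization $T_n=P_nP_{n-1}^{-1}$, which gives $T_n^{-\top}=P_{n-1}^\top P_n^{-\top}$, hence $T_n^{-\top}v=P_{n-1}^\top (P_n^{-\top}v)$. The idea is to insert the recursion $M_n=P_nM_0E^{t_n}$ so as to trade the factor $P_n^{-\top}$ for $M_n^{-\top}$ together with controllable pieces, and then to exploit the hypothesis $v\cdot\omega_{n-1}=0$ to kill the dangerous last column of the matrix that multiplies $v$.

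\textbf{Key steps.} First I would write $P_n^{-\top}=M_0^\top E^{t_n}M_n^{-\top}$ is the wrong direction, so instead use $M_n^\top=E^{t_n}M_0^\top P_n^\top$, i.e. $M_n^{-\top}=P_n^{-\top}M_0^{-\top}E^{-t_n}$, which rearranges to $P_n^{-\top}=M_n^{-\top}E^{t_n}M_0^\top$. Then
$$
T_n^{-\top}v = P_{n-1}^\top M_n^{-\top} E^{t_n} M_0^\top v.
$$
Now $E^{t_n}=\diag(\e^{-t_n},\dots,\e^{-t_n},\e^{(d-1)t_n})$, so the large eigenvalue $\e^{(d-1)t_n}$ acts only on the last coordinate of $M_0^\top v$. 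The crucial observation is that $M_0^\top v$ has vanishing last coordinate precisely when $v\in S_{n-1}^\perp$ is replaced by the condition $v\cdot(M_0e_d)=0$; but $\omega_{n-1}=M_{n-1}e_d=P_{n-1}M_0E^{t_{n-1}}e_d=\lambda_{n-1}P_{n-1}M_0e_d$, so $v\cdot\omega_{n-1}=0$ is \emph{not} the same as $e_d^\top M_0^\top v=0$. Hence I must be more careful: rather than killing a coordinate of $M_0^\top v$, I should use the relation $v\cdot\omega_{n-1}=0$ to replace $P_{n-1}^\top$ by $\hat P_{n-1}^\top$ somewhere. The cleaner route: write $P_{n-1}=\hat P_{n-1}+ (\text{last column})\, e_d^\top$, but acting on $M_0$. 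Actually the right manipulation is to note $M_{n-1}^{-\top}$ maps $\omega_{n-1}^\perp$ nicely; let me instead factor $T_n^{-\top}=P_{n-1}^\top P_n^{-\top}$ and use $P_n = T_nP_{n-1}$ only formally, and rewrite the whole product as $T_n^{-\top}v = P_{n-1}^\top\,M_n^{-\top}\,E^{t_n}\,M_0^\top\,v$ as above, then split $M_0^\top = \hat M_0^\top + e_d(\text{last row of }M_0^\top)$ and observe that the term $P_{n-1}^\top M_n^{-\top}E^{t_n}e_d(\cdots)v$ is exactly $\lambda_n^{-1}$ times $P_{n-1}^\top M_n^{-\top}\omega_n$ wait — $M_n^{-\top}(\text{column})$; rather $E^{t_n}e_d=\lambda_n e_d$ and $M_n^{-\top}$ applied to... this needs the vanishing of $v\cdot\omega_{n-1}$, which via $\omega_{n-1}=\lambda_{n-1}P_{n-1}M_0 e_d$ reads $(\,^\top\! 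M_0\,^\top\! P_{n-1}v)_d\cdot(\cdots)$ — i.e. $e_d^\top M_0^\top P_{n-1}^\top v=0$? No, $v\cdot\omega_{n-1} = v^\top \lambda_{n-1}P_{n-1}M_0 e_d = \lambda_{n-1}(M_0^\top P_{n-1}^\top v)\cdot e_d = 0$, so it is $e_d^\top M_0^\top P_{n-1}^\top v = 0$. This suggests I should pull $P_{n-1}^\top$ to act on $v$ first, which is a different factorization. So I would instead write $T_n^{-\top} = P_{n-1}^\top P_n^{-\top}$ and feed $M_n^{-\top} = E^{-(t_n-t_{n-1})}M_{n-1}^{-\top}T_n^{-\top}$... that's circular.

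\textbf{Cleaner approach.} Use $T_n = M_n E^{-(t_n-t_{n-1})}M_{n-1}^{-1}$ (the last remark of Lemma~\ref{mcf estimates}), so $T_n^{-\top} = M_{n-1}^\top E^{(t_n-t_{n-1})}M_n^{-\top}$. Then for $v\in S_{n-1}^\perp$,
$$
T_n^{-\top}v = M_{n-1}^\top\,E^{t_n-t_{n-1}}\,M_n^{-\top}v.
$$
Now I claim $M_{n-1}^\top E^{t_n-t_{n-1}}$ can be replaced, on the relevant subspace, by $\e^{-(t_n-t_{n-1})}$ times $M_{n-1}^\top$ with its last \emph{row} modified: since $E^{t_n-t_{n-1}}=\e^{-(t_n-t_{n-1})}\diag(1,\dots,1,\e^{d(t_n-t_{n-1})})$, we get $M_{n-1}^\top E^{t_n-t_{n-1}} = \e^{-(t_n-t_{n-1})}\bigl(\widehat{M_{n-1}^\top}+\e^{d(t_n-t_{n-1})}(M_{n-1}e_d)e_d^\top\bigr)$ where $\widehat{M_{n-1}^\top}$ has last column zero. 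Wait — I want to apply this to $w:=M_n^{-\top}v$, and the bad term is $\e^{(d-1)(t_n-t_{n-1})}(M_{n-1}e_d)(e_d^\top w)$. Here $e_d^\top w = e_d^\top M_n^{-\top}v = (M_n^{-1}e_d)\cdot v$, which has no reason to vanish. Hmm. So the hypothesis $v\cdot\omega_{n-1}=0$ must enter through $M_{n-1}^\top$: indeed $(M_{n-1}^\top u)$ for general $u$ is fine, but what I actually know is $v\perp \omega_{n-1}=M_{n-1}e_d$, i.e. $v^\top M_{n-1}e_d=0$. That is the statement that $v$ lies in the kernel of $e_d^\top M_{n-1}^\top$, i.e. the \emph{last coordinate of $M_{n-1}^\top v$ vanishes}. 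So I should arrange $M_{n-1}^\top$ to hit $v$, not $w$. Therefore rewrite once more: from $T_n^{-\top}=M_{n-1}^\top E^{t_n-t_{n-1}}M_n^{-\top}$ and the orbit relation $M_n = P_n M_0 E^{t_n}$, $M_{n-1}=P_{n-1}M_0E^{t_{n-1}}$, one has $M_{n-1}^\top = E^{t_{n-1}}M_0^\top P_{n-1}^\top$, hence
$$
T_n^{-\top}v = E^{t_{n-1}}M_0^\top P_{n-1}^\top\,E^{t_n-t_{n-1}}\,M_n^{-\top}v,
$$
still not isolating $v$. I will instead accept that the statement as written probably follows from: $|T_n^{-\top}v|\le |M_{n-1}^\top|\,|E^{t_n-t_{n-1}}|_{\text{restricted}}\,|M_n^{-\top}v|$ where the key point is that $E^{t_n-t_{n-1}}$ acts on $M_n^{-\top}v$ and one uses $v\perp\omega_{n-1}$ to show $M_n^{-\top}v$ avoids the expanding direction of $E^{t_n-t_{n-1}}$ up to the factor $\e^{-t_n}$ — concretely, replace $M_{n-1}^\top$ by $\hat P_{n-1}^\top$-type object by writing $M_{n-1}^\top = E^{t_{n-1}}M_0^\top P_{n-1}^\top$ and noting $P_{n-1}^\top v$'s last coordinate is tied to $v\cdot\omega_{n-1}$. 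I would then conclude
$$
|T_n^{-\top}v|\le \e^{-t_n}\,|M_n^{-\top}|\,|\hat P_{n-1}^\top|\,|v|
$$
by carefully accounting the $\e^{-t_n}$ from the non-expanding entries of $E^{t_n}$ surviving after the orthogonality cancels the $\e^{(d-1)t_n}$ entry, the $\hat P_{n-1}^\top$ absorbing the truncated projection, and $M_0$, $E^{t_{n-1}}$ being $O(1)$-type (or absorbed into constants — though the statement has no constant, so the bookkeeping must be exact).

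\textbf{Main obstacle.} The delicate point is the precise algebraic identity that converts ``$v\perp\omega_{n-1}$'' into the replacement of a full matrix by its last-column-truncated version $\hat P_{n-1}$, while extracting exactly the scalar gain $\e^{-t_n}$ and no stray constants. Everything else (operator-norm submultiplicativity, the explicit form of $E^{t}$) is routine; getting that one cancellation bookkept exactly — with the orthogonality used at the right slot and $\hat P_{n-1}^\top$ appearing rather than $P_{n-1}^\top$ — is the crux, and I expect it to hinge on writing $\omega_{n-1}=\lambda_{n-1}P_{n-1}M_0e_d$ and on the identity $P_{n-1}M_0 = \hat P_{n-1} + (\text{last column of }P_{n-1}M_0)e_d^\top$ combined with $v\cdot\omega_{n-1}=0$.
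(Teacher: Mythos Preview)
Your strategy is exactly the one the paper uses, but you are tripped up by a single elementary slip that propagates through the whole attempt: you have the order of the factors in $T_n^{-\top}$ reversed. From $T_n=P_nP_{n-1}^{-1}$ one gets $T_n^{-1}=P_{n-1}P_n^{-1}$ and hence
\[
T_n^{-\top}=(P_{n-1}P_n^{-1})^\top=P_n^{-\top}P_{n-1}^\top,
\]
not $P_{n-1}^\top P_n^{-\top}$ as you wrote. Likewise, from $T_n=M_nE^{-(t_n-t_{n-1})}M_{n-1}^{-1}$ one gets
\[
T_n^{-\top}=M_n^{-\top}\,E^{t_n-t_{n-1}}\,M_{n-1}^\top,
\]
not $M_{n-1}^\top E^{t_n-t_{n-1}}M_n^{-\top}$. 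This is why you kept ``wanting $M_{n-1}^\top$ to hit $v$'' but could not make it happen: with the correct order it already does.

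Once the order is fixed the argument is a two-line computation. Since $\omega_{n-1}=M_{n-1}e_d$, the hypothesis $v\in S_{n-1}^\perp$ says exactly that the last coordinate of $M_{n-1}^\top v$ vanishes. Writing out the matrix $E^{t_n-t_{n-1}}M_{n-1}^\top$ explicitly (its first $d-1$ rows carry the factor $e^{-t_n}$, its last row the factor $e^{(d-1)t_n}$), the last row annihilates $v$ and what remains is precisely $e^{-t_n}\hat P_{n-1}^\top v$. Hence
\[
T_n^{-\top}v=e^{-t_n}\,M_n^{-\top}\,\hat P_{n-1}^\top v,
\]
and the bound follows by submultiplicativity of the operator norm. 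No constants are absorbed and no $M_0$ or $E^{t_{n-1}}$ survive, because the $e^{-t_{n-1}}$ already sitting in $M_{n-1}$ combines with $e^{-(t_n-t_{n-1})}$ from $E^{t_n-t_{n-1}}$ to give exactly $e^{-t_n}$ on the surviving rows.
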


\begin{proof}
Firstly, since $\omega_{n-1}$ is given by the last column of $M_{n-1}$,  any $v\in S_{n-1}^\perp$ is orthogonal to it. 
Recall also that $T_n=P_nP_{n-1}^{-1}$. Thus, 
\begin{align*}
T_n^{-\top}v 
&=
P_n^{-\top}P_{n-1}^\top v \\
&=M_n^{-\top}E^{t_n}M_0^\top M_0^{-\top} E^{-t_{n-1}} M_{n-1}^\top v \\
&=M_n^{-\top}
\begin{bmatrix}
(\hat p^{(1)}_{n-1} )^\top e^{-t_n} & \dots & (\hat p^{(d)}_{n-1})^\top e^{-t_n}  \\
(p^{(1)}_{n-1}\cdot\omega) e^{(d-1)t_n} & \dots & (p^{(d)}_{n-1}\cdot\omega) e^{(d-1)t_n}
\end{bmatrix}
v \\
&=
e^{-t_n} M_n^{-\top}
\begin{bmatrix}
(\hat p^{(1)}_{n-1} )^\top & \dots & (\hat p^{(d)}_{n-1})^\top   \\
0 & \dots & 0
\end{bmatrix}
v.
\end{align*}
\end{proof}

Given a sequence $\sigma_{n}>0$ consider the following cones of integers vectors
$$
I_{n}^+ :=\{ k \in\Zz^d \colon 
|\omega_n\cdot k |\leq \sigma_{n} |k| \}
\te{and}
I_n^-:=\Zz^d\setminus I_n^+.
$$
We will refer the vectors in $I_n^+$ as resonant and in $I_n^-$ as far from resonant.
Let
\begin{align*}\label{def An}
A_n&=A_n(\sigma_{n-1},\omega_{n-1}):=\sup_{\veck\in I_{n-1}^+\setminus\{0\}}\frac{|T_n^{-\top}k|}{|k|}\,,\\
B_n&=B_n(\sigma_{n},\omega_{n}):=\sup_{\veck\in I_{n}^-}\frac{|P_n^{\top}\veck|}{|\veck|}.
\end{align*}

\begin{proposition}\label{lemma resonance cone}
For any $n\geq1$
\begin{equation}\label{formula An}
A_{n}
\leq
\frac{\sigma_{n-1} |\omega_{n-1}|}{\omega_{n-1}\cdot \omega_{n-1}}
|T_{n}^{-\top} | +
e^{-t_n} |M_{n}^{-\top} | \,|\hat P_{n-1}^\top|,
\end{equation}
where $C_1$ and $C_2$ are the constants in Proposition~\ref{bnds Mn}.
\end{proposition}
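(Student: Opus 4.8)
The plan is to decompose an arbitrary resonant vector $k\in I_{n-1}^+$ into its component along $\omega_{n-1}$ and its component in the orthogonal subspace $S_{n-1}^\perp$, apply $T_n^{-\top}$ to each piece, and bound the two contributions separately. Write
$$
k = \frac{k\cdot\omega_{n-1}}{\omega_{n-1}\cdot\omega_{n-1}}\,\omega_{n-1} + v,
\qquad
v := k - \frac{k\cdot\omega_{n-1}}{\omega_{n-1}\cdot\omega_{n-1}}\,\omega_{n-1}\in S_{n-1}^\perp.
$$
For the first term I would use that $\omega_{n-1} = \eta_{n-1}^{-1}\cdots$ is, up to the scalar $\lambda_{n-1}$, equal to $P_{n-1}\omega$, but more directly I would use $T_n^{-\top}\omega_{n-1}$: since $\omega_n = \eta_n T_n\omega_{n-1}$, we get $T_n^\top\omega_n = \eta_n^{-1}\cdot(\text{something})$; rather, the clean route is simply to bound $|T_n^{-\top}\omega_{n-1}|\le |T_n^{-\top}|\,|\omega_{n-1}|$ and, crucially, to use that $k\in I_{n-1}^+$ means $|k\cdot\omega_{n-1}|\le\sigma_{n-1}|k|$. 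This gives
$$
\left|\frac{k\cdot\omega_{n-1}}{\omega_{n-1}\cdot\omega_{n-1}}\,T_n^{-\top}\omega_{n-1}\right|
\le \frac{\sigma_{n-1}|k|\,|\omega_{n-1}|}{\omega_{n-1}\cdot\omega_{n-1}}\,|T_n^{-\top}|,
$$
which after dividing by $|k|$ is exactly the first term in~\eqref{formula An}.

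For the orthogonal component $v\in S_{n-1}^\perp$, I would invoke the preceding lemma, which yields $|T_n^{-\top}v|\le e^{-t_n}|M_n^{-\top}|\,|\hat P_{n-1}^\top|\,|v|$. It then remains to note $|v|\le|k|$, which follows because $v$ is the orthogonal projection of $k$ onto $S_{n-1}^\perp$ (orthogonal projections are non-expansive in the Euclidean norm). After dividing by $|k|$, this contributes the second term $e^{-t_n}|M_n^{-\top}|\,|\hat P_{n-1}^\top|$. Combining the two bounds via the triangle inequality $|T_n^{-\top}k|\le |T_n^{-\top}(\text{parallel part})| + |T_n^{-\top}v|$ and taking the supremum over $k\in I_{n-1}^+\setminus\{0\}$ gives the claimed inequality for $A_n$.

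The main subtlety — not a deep obstacle, but the point requiring care — is that the decomposition uses the \emph{Euclidean} inner product and orthogonal projection, so the estimate $|v|\le|k|$ is most transparent in the $\ell_2$ norm, whereas the operator norms $|\cdot|$ throughout the paper are taken with respect to the weighted $\ell_1$-type norm $\|\cdot\|_*$; one must either check that projection onto $S_{n-1}^\perp$ is still non-expansive in the relevant norm, or absorb the norm-equivalence constants (these are dimension-dependent and can be folded into $C_1,C_2$, consistent with the remark in the statement that $C_1,C_2$ are the constants of Proposition~\ref{bnds Mn}). Apart from that bookkeeping, the proof is a direct computation: split, apply the orthogonal-cone lemma to the perpendicular part, use the resonance bound on the parallel part, and add.
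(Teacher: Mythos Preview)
Your proposal is correct and follows exactly the same approach as the paper: decompose $k = k_1 + k_2$ with $k_1$ the $\omega_{n-1}$-component and $k_2 \in S_{n-1}^\perp$, bound $|T_n^{-\top}k_1|$ via the resonance condition and $|T_n^{-\top}k_2|$ via the preceding lemma, and add. The norm subtlety you flag (that the Euclidean orthogonal projection need not be contractive in the $\ell_1$-norm used here) is genuine; the paper's proof simply writes the final inequality with $|k|$ in place of $|k_2|$ without comment, so your instinct to absorb a dimensional constant is the honest way to close that step.
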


\begin{proof}
Any $\veck\in I_{n-1}^+\setminus\{0\}$ can be written as $\veck=\veck_1+\veck_2$ where
$$
\veck_1=\frac{\veck\cdot \vecomega_{n-1}}{\vecomega_{n-1}\cdot  \vecomega_{n-1}}\vecomega_{n-1}
\te{and}
\veck_2\in S_{n-1}^\perp.
$$
Hence,
\begin{equation}
\begin{split}
|T_{n}^{-\top} k| 
&\leq 
|T_{n}^{-\top} k_1| + 
|T_{n}^{-\top} k_2| \\
&\leq
\left(
\frac{\sigma_{n-1} |\omega_{n-1}|}{\omega_{n-1}\cdot \omega_{n-1}}
|T_{n}^{-\top} | +
e^{-t_n} |M_{n}^{-\top} | \,|\hat P_{n-1}^\top|
\right)
|\veck|.
\end{split}
\end{equation}
\end{proof}

Let
$$
\Delta(t)=\tau_{k(t)}-W(\tau_{k(t)})
$$
where
$ k(t)=\max\{j\in\Nn_0\colon \tau_j\leq t\}$.
Notice that
$$
\Delta(t_n)\leq t- W(t),\quad \tau_{k_n}\leq t <\tau_{k_n+1},
$$
where $k_n=k(t_n)$. Moreover $\Delta(t)$ is non-decreasing. Let $\Delta_n=\Delta(t_n)$. Thus,
\begin{equation}\label{estimate hat P_n}
|\hat P_n^\top| = \max_{i=1,\dots,d} |\hat p_n^{(i)}|
=|\hat p_n|=\mu^{-1} \|\hat p_n\| = \mu^{-1} e^{\Delta_n}
\end{equation}
by the fact that the first column on $M_n$ is always a best diophantine approximation~\cite{Lagarias94,C13} and~\eqref{eq W - tau_n}. 
\begin{lemma}\label{mcf estimates 2}
If for every $n\geq1$, $\xi_n>0$ and
$$
\sigma_n \leq \xi_n C_1^{-1}\mu^{-1} e^{-(d-1)(t_{n+1} -t_{n})-(d-1)W_n-t_{n+1}+\Delta_{n}},
$$
then
\begin{enumerate}
\item 
$A_n\leq (1+\xi_n)C_2\mu^{-1}\,e^{-\Delta_n+\Delta_{n-1}}$,
\item 
$A_1\cdots A_n\leq (1+\xi_n)^n C_2^n \mu^{-n}\,e^{-\Delta_n}$,
\item 
$A_1\cdots A_{n}B_{n}\leq |\omega|C_1(1+\xi_n)^n C_2^n\mu^{-n}  e^{-\Delta_n+t_n-W_n+d W_{n}}$,
\end{enumerate}
where $C_1$ and $C_2$ are the constants in Proposition~\ref{bnds Mn}.
\end{lemma}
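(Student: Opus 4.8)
The plan is to derive all three bounds from Proposition~\ref{lemma resonance cone}, Lemma~\ref{mcf estimates} and formula~\eqref{estimate hat P_n}, reading the hypothesis on $\sigma_n$ as exactly the book-keeping that forces the two terms in the estimate for $A_n$ to be comparable. Throughout I would use $|A^{-\top}|\le d\,|A^{-1}|$ and $|A^\top|\le d\,|A|$, absorbing such dimensional factors into $C_1,C_2$.

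I would first prove (1). Proposition~\ref{lemma resonance cone} gives
\[
A_n \le \frac{\sigma_{n-1}\,|\omega_{n-1}|}{\omega_{n-1}\cdot\omega_{n-1}}\,|T_n^{-\top}| + e^{-t_n}\,|M_n^{-\top}|\,|\hat P_{n-1}^\top|,
\]
and I would estimate the second summand first: by Lemma~\ref{mcf estimates}(2) one has $|M_n^{-\top}|\lesssim C_2 e^{W_n}$, while~\eqref{estimate hat P_n} gives $|\hat P_{n-1}^\top|=\mu^{-1}e^{\Delta_{n-1}}$, so this summand is $\lesssim C_2\mu^{-1}e^{-t_n+W_n+\Delta_{n-1}}$; since $\Delta_n=\Delta(t_n)\le t_n-W_n$ (the inequality recorded just before~\eqref{estimate hat P_n}), it is $\le C_2\mu^{-1}e^{-\Delta_n+\Delta_{n-1}}$, i.e.\ exactly the claimed bound but with $1$ in place of $1+\xi_n$. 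For the first summand I would bound $|T_n^{-\top}|$ by Lemma~\ref{mcf estimates}(7), control the factor $|\omega_{n-1}|/(\omega_{n-1}\cdot\omega_{n-1})$ by the lower bound on $\|\omega_{n-1}\|$ (the last column of the reduced matrix $M_{n-1}\in\FF$ has norm bounded below by a dimensional constant, so this factor is $O(1)$), and then substitute the hypothesis bound on $\sigma_{n-1}$: its exponent $-(d-1)(t_n-t_{n-1})-(d-1)W_{n-1}-t_n+\Delta_{n-1}$ is designed to cancel the growth $e^{(d-1)(t_n-t_{n-1})+(d-1)W_{n-1}+W_n}$ of $|T_n^{-\top}|$, the constants $C_1^{-1}$ and $C_1$ cancel, and what remains is at most $\xi_n C_2\mu^{-1}e^{-t_n+W_n+\Delta_{n-1}}\le\xi_n C_2\mu^{-1}e^{-\Delta_n+\Delta_{n-1}}$, again by $\Delta_n\le t_n-W_n$. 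Adding the two summands gives $A_n\le(1+\xi_n)C_2\mu^{-1}e^{-\Delta_n+\Delta_{n-1}}$ (for $n=1$ one uses the corresponding bound on $\sigma_0$, where $t_0=W_0=\Delta_0=0$).

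Item (2) is then a telescoping of (1) over $k=1,\dots,n$: the exponentials collapse to $e^{-\Delta_n+\Delta_0}$ with $\Delta_0=\tau_0-W(\tau_0)=0$, and the remaining factors are bounded by $\prod_{k=1}^n(1+\xi_k)\le(1+\xi_n)^n$ (taking, as one may, $\xi_n$ non-decreasing). For (3) the cone defining $B_n$ plays no role: $B_n\le|P_n^\top|\le d\,|P_n|$ and Lemma~\ref{mcf estimates}(4) gives $|P_n|\le C_1|\omega|e^{t_n-W_n+dW_n}$, so multiplying by (2) and absorbing $d$ into $C_1$ produces
\[
A_1\cdots A_n B_n\le|\omega|C_1(1+\xi_n)^n C_2^n\mu^{-n}e^{-\Delta_n+t_n-W_n+dW_n}.
\]
The only genuinely delicate step is the first summand in the bound for $A_n$: one must check that the exponent prescribed for $\sigma_n$ in the hypothesis exactly offsets both the growth of $|T_n^{-\top}|$ from Lemma~\ref{mcf estimates}(7) and the reciprocal-norm factor $|\omega_{n-1}|/(\omega_{n-1}\cdot\omega_{n-1})$, so that after substitution the surviving exponential is again $e^{-\Delta_n+\Delta_{n-1}}$ and nothing worse; everything else is routine exponent arithmetic on top of the already-established estimates.
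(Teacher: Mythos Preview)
Your approach is correct and is exactly the one the paper takes: the paper's proof is a one-line reference to Lemma~\ref{mcf estimates}, Proposition~\ref{lemma resonance cone} and~\eqref{estimate hat P_n}, together with the two inequalities $v\cdot v\ge|v|^2/d$ and $|\omega_n|\ge 1$ (which make your ``$O(1)$'' bound on $|\omega_{n-1}|/(\omega_{n-1}\cdot\omega_{n-1})$ precise). The only point to tidy up is the $\xi$-indexing in the telescoping for (2): $A_k$ is controlled through $\sigma_{k-1}$, so the factors are $(1+\xi_{k-1})$ rather than $(1+\xi_k)$, and your ``non-decreasing'' remark does not match the application $\xi_n=n^{-1}$ used later; this is harmless bookkeeping (one can simply take a common $\xi$ or absorb the discrepancy into constants) and the paper glosses over it as well.
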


\begin{proof}
It follows from Lemmas~\ref{mcf estimates}, Proposition~\ref{lemma resonance cone}, and \eqref{estimate hat P_n}.
Notice that we use the following relations: $\vecv\cdot\vecv\geq |\vecv|^2/d$ and $|\omega_n|\geq1$. 
\end{proof}

\section{Functional Spaces}
\label{section:gevrey}

\subsection{Gevrey spaces}

Let $\Tt^d=\Rr^d/(2\pi\Zz)^d$ with $d\geq2$.
The set of smooth $\Rr$-valued $2\pi\Zz^d$-periodic functions on $\Rr^d$ is denoted by $C^\infty(\Tt^d)$. In the following we shall use multi-index notation. So given $\alpha=(\alpha_1,\ldots,\alpha_d)\in\Nn_0^d$, where $\Nn_0=\{0,1,2,\dots\}$, we write
$$
\alpha!=\alpha_1!\cdots \alpha_d!\,,\quad |\alpha|=\alpha_1+\cdots+\alpha_d\quad\text{and}\quad \partial^\alpha=\partial^{\alpha_1}_{x_1}\cdots\partial^{\alpha_d}_{x_d}
$$
for the derivatives.
The sup-norm of $f\in C^\infty(\Tt^d)$ is defined as 
$$
\|f\|_{C^0}:=\max_{\vecx\in\Rr^d}|f(\vecx)|.
$$

\begin{lemma}
Let $a_n\geq0$, $n\in\Nn$, and $s\geq1$. 
Then,
\begin{enumerate}
\item
\begin{equation}\label{eq standard estimates2}
\sum_{i=1}^\infty a_i^s 
\leq \left(\sum_{i=1}^\infty a_i\right)^s 
\end{equation}
\item
\begin{equation}\label{eq standard estimates3}
\sum_{i=1}^d a_i^{1/s} 
\leq d^{(s-1)/s} \left(\sum_{i=1}^d a_i\right)^{1/s} 
\end{equation}
\end{enumerate}
\end{lemma}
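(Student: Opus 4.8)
The two inequalities are elementary and well known, so my plan is to give short self-contained proofs rather than invoke external references.

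\medskip

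\textbf{Part (1).} The claim is $\sum_i a_i^s \le \bigl(\sum_i a_i\bigr)^s$ for $s\ge 1$ and $a_i\ge 0$. First I would dispose of the trivial case $\sum_i a_i = 0$ (then all $a_i=0$) and of $\sum_i a_i = +\infty$ (then the right side is $+\infty$). Otherwise set $S=\sum_i a_i \in (0,\infty)$ and put $b_i = a_i/S \in [0,1]$, so that $\sum_i b_i = 1$. Since $0\le b_i\le 1$ and $s\ge 1$, we have $b_i^s \le b_i$ for each $i$; summing gives $\sum_i b_i^s \le \sum_i b_i = 1$, i.e. $\sum_i a_i^s \le S^s$, which is the assertion. (If one prefers to avoid infinite sums one argues first for finite partial sums and passes to the limit by monotone convergence.)

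\medskip

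\textbf{Part (2).} The claim is $\sum_{i=1}^d a_i^{1/s} \le d^{(s-1)/s}\bigl(\sum_{i=1}^d a_i\bigr)^{1/s}$. This is H\"older's inequality (equivalently, concavity of $x\mapsto x^{1/s}$) with the conjugate exponents $s$ and $s/(s-1)$: writing
\begin{equation*}
\sum_{i=1}^d a_i^{1/s}
= \sum_{i=1}^d 1\cdot a_i^{1/s}
\le \left(\sum_{i=1}^d 1^{s/(s-1)}\right)^{(s-1)/s}\left(\sum_{i=1}^d a_i\right)^{1/s}
= d^{(s-1)/s}\left(\sum_{i=1}^d a_i\right)^{1/s},
\end{equation*}
which is exactly \eqref{eq standard estimates3}. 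Alternatively, apply Jensen's inequality to the concave function $\varphi(x)=x^{1/s}$ and the uniform probability weights $1/d$ on the points $a_1,\dots,a_d$: $\frac1d\sum_i \varphi(a_i)\le \varphi\bigl(\frac1d\sum_i a_i\bigr)$, and multiply through by $d$. The case $s=1$ is a trivial equality, so one may assume $s>1$ and the exponent $s/(s-1)$ is finite.

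\medskip

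\textbf{Main obstacle.} There is essentially no obstacle here: both statements are one-line consequences of, respectively, the inequality $t^s\le t$ on $[0,1]$ and H\"older/Jensen. The only points requiring a word of care are the degenerate cases in Part (1) (empty or infinite sum) and making sure the normalization step is legitimate, i.e. that $\sum_i a_i>0$ before dividing; these are handled by the case distinction above.
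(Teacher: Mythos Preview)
Your proof is correct and essentially matches the paper's. Part~(1) is identical (normalize by $S=\sum_i a_i$ and use $b_i^s\le b_i$ on $[0,1]$); for Part~(2) the paper applies convexity of $x\mapsto x^s$ to $b_i=a_i^{1/s}$, which is just the Jensen formulation you give as an alternative, so only the packaging differs.
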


\begin{proof}\hfill
\begin{enumerate}
\item
Assume that $0<\sum_{i=1}^\infty a_i<\infty$ (the remaining cases are immediate).
Thus,
$$
\sum_{j=1}^\infty\left(\frac{a_j}{\sum_{i=1}^\infty a_i}\right)^s\leq
\sum_{j=1}^\infty\frac{a_j}{\sum_{i=1}^\infty a_i} =1.
$$
\item

By the convexity of the function $x\mapsto x^s$,
$$
\left(\frac{b_1+\cdots+b_d}{d}\right)^s\leq \frac{b_1^s+\cdots+b_d^s}{d}.
$$
Now set $b_i=a_i^{1/s}$.
\end{enumerate}
\end{proof}

A smooth function $f\in C^{\infty}(\Tt^d)$ is \textit{$s$-Gevrey} with $s\geq 1$ if there exist constants $C>0$ and $\rho>0$ such that 
$$
\|\partial^\alpha f\|_{C^0}\leq C \frac{\alpha!^s}{\rho^{s|\alpha|}},
\quad
\alpha\in\Nn_0^d.
$$

Gevrey functions constitute an intermediate regularity class between smooth ($s=+\infty$) and real-analytic functions ($s=1$). 
Every $1$-Gevrey function is real-analytic because its Taylor series converges in a complex strip of radius $\rho$.

It is worthwhile observing that, unlike analytic functions, it is possible to construct $s$-Gevrey functions supported on any compact subset if $s>1$. 

\begin{remark}
The above definition of $s$-Gevrey function requires 
$$\|\partial^\alpha f\|_{C^0}\leq C L_{\alpha} M_{\alpha}$$ with $M_\alpha=\alpha!^s$ and $L_\alpha=\rho^{-s|\alpha|}$. Other sequences $M_\alpha$ give more general \textit{ultradifferentiable classes} (or \textit{Carleman classes}) of functions widely used in other branches of mathematics (see~\cite{MS02} and references therein).
\end{remark}

Fixing the constant $\rho>0$, Marco and Sauzin have defined the following spaces of Gevrey functions \cite{MS02}. A smooth function $f\in C^\infty(\Tt^d)$ belongs to $\CC_{s,\rho}(\Tt^d)$ if 
$$
\|f\|_{\CC_{s,\rho}}:=\sum_{\alpha\in\Nn^d_0}\frac{\rho^{s|\alpha|}}{\alpha!^s} \|\partial^\alpha f\|_{C^0}<\infty\,.
$$
The advantage of introducing this norm is that $\CC_{s,\rho}(\Tt^d)$ becomes a Banach algebra~\cite{MS02}. It is also clear that $\|f\|_{\CC_{s,\rho'}}\leq \|f\|_{\CC_{s,\rho}}$ for $0<\rho'<\rho$ and that any $s$-Gevrey function belongs to $\CC_{s,\rho}(\Tt^d)$ for some $\rho>0$. 
That is, the set of $s$-Gevrey functions is $\bigcup_{\rho>0}\CC_{s,\rho}(\Tt^d)$.
Moreover, we have the following Cauchy-type estimate.

\begin{lemma}[{\cite[Lemma A.2.]{MS02}}]\label{lm:cauchy}
If $0<\rho'<\rho$ and $f\in \CC_{s,\rho}(\Tt^d)$, then for every $\alpha\in\Nn_0^d$ the partial derivative $\partial^\alpha f$ belongs to $\CC_{s,\rho'}(\Tt^d)$ and
$$
\sum_{|\alpha|=n}\|\partial^\alpha f\|_{\CC_{s,\rho'}}\leq \frac{n!^s}{{(\rho-\rho')}^{ns}}\|f\|_{\CC_{s,\rho}}\,.
$$
\end{lemma}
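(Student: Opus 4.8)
The plan is to reduce the statement to a one-dimensional gap estimate for the norm $\|\cdot\|_{\CC_{s,\rho}}$ and then iterate. First I would recall that, by definition,
$$
\sum_{|\alpha|=n}\|\partial^\alpha f\|_{\CC_{s,\rho'}}
=\sum_{|\alpha|=n}\sum_{\beta\in\Nn_0^d}\frac{(\rho')^{s|\beta|}}{\beta!^s}\|\partial^{\alpha+\beta}f\|_{C^0},
$$
and reorganize the double sum according to the total multi-index $\gamma=\alpha+\beta$. For a fixed $\gamma$ with $|\gamma|\ge n$, the coefficient collected in front of $\|\partial^\gamma f\|_{C^0}$ is
$$
\sum_{\substack{\alpha+\beta=\gamma\\ |\alpha|=n}}\frac{(\rho')^{s|\beta|}}{\beta!^s}
=(\rho')^{s(|\gamma|-n)}\sum_{\substack{\beta\le\gamma\\ |\beta|=|\gamma|-n}}\frac1{\beta!^s}.
$$
So the whole task is to compare this against $\dfrac{n!^s}{(\rho-\rho')^{ns}}\cdot\dfrac{\rho^{s|\gamma|}}{\gamma!^s}$, the corresponding term in $\dfrac{n!^s}{(\rho-\rho')^{ns}}\|f\|_{\CC_{s,\rho}}$, uniformly in $\gamma$.

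The key combinatorial inequality I would isolate is: for every $\gamma\in\Nn_0^d$ and every $0\le m\le|\gamma|$,
$$
\sum_{\substack{\beta\le\gamma\\ |\beta|=m}}\binom{\gamma}{\beta}^{?}\ \le\ \cdots
$$
— more precisely, using $\dbinom{\gamma}{\beta}=\dfrac{\gamma!}{\beta!(\gamma-\beta)!}$ one has $\dfrac1{\beta!^s}\le\dfrac1{\beta!^s(\gamma-\beta)!^s}\cdot(\gamma-\beta)!^s$, and I want the bound
$$
\frac{n!^s}{(|\gamma|-n)!^{s}}\cdot\frac{1}{\gamma!^s}\sum_{\substack{\beta\le\gamma\\|\beta|=|\gamma|-n}}\frac{(|\gamma|-n)!^s}{\beta!^s}
\ \le\ \Big(\sum_{\substack{\beta\le\gamma\\|\beta|=|\gamma|-n}}\binom{\gamma}{\beta}\Big)^s\cdot\frac{1}{\gamma!^s}\,\binom{|\gamma|}{n}^{-s}\cdot(\text{stuff}),
$$
which after using $\sum_{|\beta|=m,\beta\le\gamma}\binom{\gamma}{\beta}=\binom{|\gamma|}{m}$ (Vandermonde) and the superadditivity inequality $\eqref{eq standard estimates2}$ (namely $\sum a_i^s\le(\sum a_i)^s$) collapses to an estimate purely in the single integer $|\gamma|$. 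Concretely, writing $N=|\gamma|$, what survives is the one-variable fact
$$
\sum_{m\ge 0}(\rho')^{sm}\,(\rho-\rho')^{-sn}\,\binom{N}{n}^{s}\,\Big(\text{with }N=m+n\Big)\ \lesssim\ \rho^{sN},
$$
i.e. essentially $\binom{N}{n}(\rho')^{m}(\rho-\rho')^{n}\le\rho^{N}$ raised to the $s$-th power and summed — this is just the binomial theorem $\rho^N=(\rho'+(\rho-\rho'))^N=\sum_m\binom{N}{m}(\rho')^m(\rho-\rho')^{N-m}$, so every term with $N-m=n$ is one summand of a convergent positive series with total $\rho^N$, and raising to the power $s\ge1$ only helps via $\eqref{eq standard estimates2}$.

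I would then assemble these pieces: fix $\gamma$, apply $\eqref{eq standard estimates2}$ to pull the power $s$ outside the sum over $\beta$, recognize the inner sum as $\binom{|\gamma|}{n}$ by Vandermonde, invoke the binomial identity above to dominate $(\rho')^{s(|\gamma|-n)}\binom{|\gamma|}{n}^s$ by $\rho^{s|\gamma|}(\rho-\rho')^{-ns}$ up to the factor $n!^s/\gamma!^s\cdot(\text{rest})$, and finally sum over all $\gamma$ to recover $\|f\|_{\CC_{s,\rho}}$ on the right. One also needs to note that $\partial^\alpha f\in\CC_{s,\rho'}(\Tt^d)$ at all, which is automatic once the displayed bound is finite. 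The main obstacle, I expect, is getting the factorial bookkeeping exactly right: the interplay between $\beta!$, $(\gamma-\beta)!$, $\gamma!$ and the multinomial/Vandermonde identity must be arranged so that the power $s$ can be extracted cleanly via $\eqref{eq standard estimates2}$ without losing a dimension-dependent constant — in the analytic case $s=1$ it is a transparent application of the binomial theorem, and the whole point is that Gevrey convexity $\sum a_i^s\le(\sum a_i)^s$ lets the same argument go through verbatim for $s>1$.
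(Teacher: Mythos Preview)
The paper does not prove this lemma at all; it is quoted verbatim from \cite[Lemma~A.2]{MS02}, so there is no in-paper argument to compare against. Your strategy is the standard one and it works, but since your write-up leaves the factorial bookkeeping somewhat open-ended, let me confirm the one step you flagged as the ``main obstacle''.

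After regrouping by $\gamma=\alpha+\beta$, the inequality you need termwise (with $N=|\gamma|$, $m=N-n$) is
\[
(\rho')^{sm}\sum_{\substack{\beta\le\gamma\\|\beta|=m}}\frac{1}{\beta!^s}
\;\le\;
\frac{n!^s}{(\rho-\rho')^{ns}}\cdot\frac{\rho^{sN}}{\gamma!^s}.
\]
Multiply through by $\gamma!^s/n!^s$. For each $\beta$ in the sum set $\alpha=\gamma-\beta$, so $|\alpha|=n$ and hence $\alpha!\le n!$; this gives $\dfrac{\gamma!}{n!\,\beta!}\le\dfrac{\gamma!}{\alpha!\,\beta!}=\dbinom{\gamma}{\beta}$. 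Then by \eqref{eq standard estimates2} and Vandermonde,
\[
\sum_{\substack{\beta\le\gamma\\|\beta|=m}}\Big(\frac{\gamma!}{n!\,\beta!}\Big)^{s}
\le\sum_{\substack{\beta\le\gamma\\|\beta|=m}}\binom{\gamma}{\beta}^{s}
\le\Big(\sum_{\substack{\beta\le\gamma\\|\beta|=m}}\binom{\gamma}{\beta}\Big)^{s}
=\binom{N}{m}^{s}.
\]
Finally $\binom{N}{m}(\rho')^{m}(\rho-\rho')^{n}\le(\rho'+(\rho-\rho'))^{N}=\rho^{N}$, and raising to the $s$-th power finishes the termwise bound. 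Summing over $\gamma$ gives exactly the claimed estimate. So your plan is complete once $\alpha!\le n!$ is inserted at the right spot; no dimension-dependent constant appears.
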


Another important property of Gevrey functions is that the composition of Gevrey functions is again Gevrey. 

\begin{theorem}[{\cite[Corollary A.1.]{MS02}}]\label{th:MarcoSauzin}
If $0<d^{\frac{s-1}{s}}\rho'<\rho$, $f\in \CC_{s,\rho}(\Tt^d)$ and $u=(u_1,\ldots,u_d)$ with $u_i\in \CC_{s,\rho'}(\Tt^d)$ such that
$$
\|u_i\|_{\CC_{s,\rho'}}\leq \frac{\rho^s}{d^{s-1}}-\rho'^s,
$$ 
then $f\circ(\id+u)\in \CC_{s,\rho'}(\Tt^d)$ and $\|f\circ (\id + u)\|_{\CC_{s,\rho'}}\leq \|f\|_{\CC_{s,\rho}}$. 
\end{theorem}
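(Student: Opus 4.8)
The plan is to strip off the function $f$ and reduce everything to an estimate involving only $u$. Since each $u_i$ is $2\pi\Zz^d$-periodic, $\phi:=\id+u$ is a well-defined smooth self-map of $\Tt^d$ and $\|g\circ\phi\|_{C^0}\leq\|g\|_{C^0}$ for every $g\in C^0(\Tt^d)$. Iterating the chain rule (the multivariate Fa\`a di Bruno formula), for every $\gamma\in\Nn_0^d$ one has
$$
\partial^\gamma(f\circ\phi)=\sum_{\beta\in\Nn_0^d}\bigl((\partial^\beta f)\circ\phi\bigr)\,Q_{\gamma,\beta}(\phi),
$$
where $Q_{\gamma,\beta}$ is a universal polynomial with non-negative integer coefficients in the partial derivatives $\partial^\ell\phi_i$ (only $\ell\neq0$ with $|\ell|\leq|\gamma|-|\beta|+1$ occur) and $Q_{\gamma,\beta}\equiv0$ when $|\beta|>|\gamma|$. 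Plugging this into the definition of $\|\cdot\|_{\CC_{s,\rho'}}$, bounding $\|(\partial^\beta f)\circ\phi\|_{C^0}\leq\|\partial^\beta f\|_{C^0}$ and using Tonelli's theorem (all summands are non-negative), it suffices to prove the $f$-free inequality
\begin{equation}\label{eq:ffree}
\Sigma_\beta:=\sum_{\gamma\in\Nn_0^d}\frac{\rho'^{\,s|\gamma|}}{\gamma!^{\,s}}\,\|Q_{\gamma,\beta}(\phi)\|_{C^0}\ \leq\ \frac{\rho^{\,s|\beta|}}{\beta!^{\,s}}\qquad\text{for all }\beta\in\Nn_0^d,
\end{equation}
because summing \eqref{eq:ffree} against $\|\partial^\beta f\|_{C^0}$ over $\beta$ reproduces exactly $\|f\|_{\CC_{s,\rho}}$ on the right, which simultaneously gives $f\circ\phi\in\CC_{s,\rho'}(\Tt^d)$ and the asserted estimate.

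To prove \eqref{eq:ffree} I would use the explicit Constantine--Savits form of $Q_{\gamma,\beta}$: it equals $\gamma!$ times a sum over configurations of pairs $(k_1,\ell_1),\dots,(k_m,\ell_m)$ with $k_j,\ell_j\in\Nn_0^d\setminus\{0\}$, the $\ell_j$ pairwise distinct, $\sum_j k_j=\beta$ and $\sum_j|k_j|\ell_j=\gamma$, of the monomials $\prod_{j=1}^m(\partial^{\ell_j}\phi)^{k_j}\big/\bigl(k_j!\,(\ell_j!)^{|k_j|}\bigr)$. A configuration determines $\gamma$, so the double sum in \eqref{eq:ffree} collapses to a single sum over configurations; the weight $\rho'^{\,s|\gamma|}/\gamma!^{\,s}$ cancels one factor $\gamma!$ from $Q_{\gamma,\beta}$, leaving $\rho'^{\,s|\gamma|}/\gamma!^{\,s-1}$, and one distributes $\rho'^{\,s|\gamma|}=\prod_j\rho'^{\,s|k_j|\,|\ell_j|}$ over the $j$'s. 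The mechanism is the superadditivity $\gamma!=(\sum_j|k_j|\ell_j)!\geq\prod_j(\ell_j!)^{|k_j|}$ of factorials of multi-indices, which lets the remaining factorials recombine so that each product factor $\partial^{\ell_j}\phi_i$ carries at most the weight $\rho'^{\,s|\ell_j|}/(\ell_j!)^{s}$, the leftover being a product of binomial coefficients raised to the power $-(s-1)$ (each $\leq1$) together with the multiplicities of the configurations. Summing $\rho'^{\,s|\ell|}/(\ell!)^{s}$ against $\|\partial^\ell\phi_i\|_{C^0}$ over $\ell\neq0$, and using $\partial^\ell\id_i=\delta_{\ell,e_i}$, gives
$$
N_i:=\sum_{0\neq\ell\in\Nn_0^d}\frac{\rho'^{\,s|\ell|}}{(\ell!)^{s}}\,\|\partial^\ell\phi_i\|_{C^0}\ \leq\ \rho'^{\,s}+\bigl(\|u_i\|_{\CC_{s,\rho'}}-\|u_i\|_{C^0}\bigr)\ \leq\ \frac{\rho^{\,s}}{d^{\,s-1}},
$$
by the hypothesis $\|u_i\|_{\CC_{s,\rho'}}\leq\rho^s/d^{s-1}-\rho'^{\,s}$ --- which is exactly why that hypothesis has that shape, and why $d^{(s-1)/s}\rho'<\rho$ is required: it is precisely the condition under which the bound on $N_i$ is compatible with $u_i$ not being identically zero.

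It remains to organize the combinatorial sum, and this is where the real work lies. The resulting multiple series is summed with the aid of $\sum_i a_i^{\,p}\leq(\sum_i a_i)^{\,p}$ for integer $p\geq1$ (inequality~\eqref{eq standard estimates2}), which extracts the powers $|k_j|$ from the sums over the $\ell_j$, and of $\sum_{i=1}^d a_i^{1/s}\leq d^{(s-1)/s}(\sum_i a_i)^{1/s}$ (inequality~\eqref{eq standard estimates3}), which governs the interplay between the $d$ coordinates and is matched against the factor $d^{\,s-1}$ sitting in the denominator of each $N_i$; crucially one must retain, rather than discard, the binomial-coefficient denominators that survive the cancellation of $\gamma!$ against the $(\ell_j!)^{|k_j|}$, for it is exactly their sum that converts the naive factor $1/\beta!$ (which is all one gets for $s=1$) into $1/\beta!^{\,s}$. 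The delicate point, which I expect to be the main obstacle, is precisely this last bookkeeping: one must check that the surviving binomial factors, the multiplicities of the Fa\`a di Bruno configurations, and the $d^{(s-1)/s}$ loss from the coordinate sum conspire to leave the factor $\beta!^{-s}$ and nothing worse. Any shortcut --- for instance combining the Banach-algebra bound $\|u^\beta\|_{\CC_{s,\rho'}}\leq\prod_i\|u_i\|_{\CC_{s,\rho'}}^{\beta_i}$ with a plain per-index Cauchy estimate in the spirit of Lemma~\ref{lm:cauchy} --- discards these structures, produces Bell-number-type growth, and already fails to reach the stated hypotheses for $s=1$, $d\geq2$.
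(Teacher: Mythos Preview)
The paper does not prove this theorem; it is quoted verbatim as Corollary~A.1 of Marco--Sauzin~\cite{MS02} and used as a black box. So there is no ``paper's own proof'' to compare against.

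That said, your strategy is exactly the one Marco and Sauzin carry out in their Appendix~A: Fa\`a di Bruno in the Constantine--Savits form, reduction to an $f$-free inequality of the type~\eqref{eq:ffree}, and the key identification of the quantity $N_i=\rho'^{\,s}+(\|u_i\|_{\CC_{s,\rho'}}-\|u_i\|_{C^0})$ controlled by the hypothesis. Your diagnosis of where the real work lies --- the combinatorial bookkeeping that upgrades $1/\beta!$ to $1/\beta!^{\,s}$ via the surviving multinomial factors raised to the power $-(s-1)$ --- is accurate, and that is precisely the calculation Marco--Sauzin perform. What you have written is a correct and faithful outline of their argument; to make it a proof rather than a proof sketch you would still need to execute that bookkeeping step, which you have flagged but not done.
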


Other interesting results about Gevrey functions can be found in \cite[Appendix A]{MS02}. See also~\cite{rodino}.


We denote by $C^\infty(\Tt^d,\Rr^d)$ the set of smooth $\Rr^d$-valued $2\pi\Zz^d$-periodic functions on $\Rr^d$. 
Given $f=(f_1,\dots,f_d)\in C^\infty(\Tt^d,\Rr^d)$ and $\rho>0$ we define the following $s$-Gevrey norm,
$$
\|f\|_{\CC_{s,\rho}} := \|f_1\|_{\CC_{s,\rho}} + \cdots + \|f_d\|_{\CC_{s,\rho}}.
$$
Similarly, we denote by $\CC_{s,\rho}(\Tt^d,\Rr^d)$ the set of $\Rr^d$-valued $s$-Gevrey functions that satisfy $\|f\|_{\CC_{s,\rho}}<\infty$, which is a Banach space.
Both Lemma~\ref{lm:cauchy} and Theorem~\ref{th:MarcoSauzin} hold for $\Rr^d$-valued $s$-Gevrey functions.


Given $f\in \CC_{s,\rho}(\Tt^d,\Rr^d)$, the derivative $Df$ can be seen as a continuous linear operator defined on $\CC_{s,\rho}(\Tt^d,\Rr^d)$. Denote by $\|Df\|_{\CC_{s,\rho}}$ the induced operator norm, i.e.
$$
\|Df\|_{\CC_{s,\rho}} = 
\max_{1\leq j\leq d} \sum_{i=1}^d 
\left\|\partial^{e_j}f_i\right\|_{\CC_{s,\rho}}
=
\max_{|\alpha|=1}
\left\|\partial^{\alpha}f\right\|_{\CC_{s,\rho}},
$$
where $e_j$ are the canonical basis vectors of $\Rr^d$.

Denote by $\CC'_{s,\rho}(\Tt^d,\Rr^d)$ the set of $s$-Gevrey functions that satisfy 
$$
\|f\|_{\CC'_{s,\rho}}
:=\|f\|_{\CC_{s,\rho}} + \|Df\|_{\CC_{s,\rho}}
<\infty.
$$ 
The set $\CC'_{s,\rho}(\Tt^d,\Rr^d)$ together with the norm $\|\cdot\|_{\CC'_{s,\rho}}$ is a Banach space contained in $\CC_{s,\rho}(\Tt^d,\Rr^d)$. 

Since $s$ will be fixed, in order to simplify the notation we shall write $\CC_{\rho}$ and  $\CC'_{\rho}$ in place of $\CC_{s,\rho}(\Tt^d,\Rr^d)$ and $\CC'_{s,\rho}(\Tt^d,\Rr^d)$, respectively.

\begin{lemma}\label{lem:norm estimates}
If $0<d^{\frac{s-1}{s}}\rho'<\rho$, $f\in \CC'_{\rho}$ and $u\in \CC_{\rho'}$ such that
$$
\|u\|_{\CC_{\rho'}}\leq \frac{\rho^s}{d^{s-1}}-\rho'^s,
$$ 
then
\begin{enumerate}
\item \label{estimate Df(id+u)}
$\|Df\circ (\id + u)\|_{\CC_{\rho'}}\leq\|Df\|_{\CC_{\rho}}$,
\item \label{estimate f(id+u)-f}
$\|f\circ(\id+u)-f\|_{\CC_{\rho'}}\leq \|Df\|_{\CC_{\rho}}\|u\|_{\CC_{\rho'}}$,
\end{enumerate}
Moreover, if 
$$
\|u\|_{\CC_{\rho'}}\leq \frac{(\rho+d^{\frac{s-1}{s}}\rho')^s}{2^sd^{s-1}}-\rho'^s,
$$ 
then
\begin{enumerate}
\item[(3)]
$
\|f\circ(\id + u)-f\|_{\CC_{\rho'}}\leq \frac{2^s}{(\rho-d^{\frac{s-1}{s}}\rho')^s}
\|f\|_{\CC_{\rho}}\|u\|_{\CC_{\rho'}}.
$
\item[(4)]\label{estimate Df(id+u)-Df}
$
\|Df\circ(\id + u)-Df\|_{\CC_{\rho'}}\leq \frac{2^s}{(\rho-d^{\frac{s-1}{s}}\rho')^s}
\|Df\|_{\CC_{\rho}}\|u\|_{\CC_{\rho'}}.
$
\end{enumerate}
\end{lemma}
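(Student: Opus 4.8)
The plan is to derive all four estimates from three ingredients already at hand: the composition estimate of Theorem~\ref{th:MarcoSauzin}, the Cauchy estimate of Lemma~\ref{lm:cauchy}, and a first-order Taylor (fundamental theorem of calculus) identity, establishing the items in the order (1), (2), and then (3) and (4) together. For (1) I would apply Theorem~\ref{th:MarcoSauzin} entrywise to the scalar functions $\partial^{e_j}f_i$, which belong to $\CC_{s,\rho}(\Tt^d)$ precisely because $f\in\CC'_\rho$, noting that the hypothesis $\|u\|_{\CC_{\rho'}}\leq\rho^s/d^{s-1}-\rho'^s$ forces each component norm $\|u_i\|_{\CC_{\rho'}}$ to obey the same bound. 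This gives $\|(\partial^{e_j}f_i)\circ(\id+u)\|_{\CC_{\rho'}}\leq\|\partial^{e_j}f_i\|_{\CC_\rho}$; summing over $i$ and taking the maximum over $j$ reconstitutes the operator norm and yields $\|Df\circ(\id+u)\|_{\CC_{\rho'}}\leq\|Df\|_{\CC_\rho}$.

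For (2) I would write $f\circ(\id+u)-f=\int_0^1 Df(\id+tu)\,u\,dt$ as a $\CC_{\rho'}$-valued integral, so that $\|f\circ(\id+u)-f\|_{\CC_{\rho'}}\leq\int_0^1\|Df(\id+tu)\,u\|_{\CC_{\rho'}}\,dt$. Expanding the matrix--vector product and using that $\CC_{s,\rho'}$ is a Banach algebra, I would bound $\|Df(\id+tu)\,u\|_{\CC_{\rho'}}\leq\|Df\circ(\id+tu)\|_{\CC_{\rho'}}\,\|u\|_{\CC_{\rho'}}$; since $\|tu\|_{\CC_{\rho'}}\leq\|u\|_{\CC_{\rho'}}$ still satisfies the hypothesis of (1), the first factor is $\leq\|Df\|_{\CC_\rho}$ uniformly in $t$, and integrating in $t$ gives (2).

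For (3) and (4) I would introduce the intermediate radius $\rho'':=\tfrac12(\rho+d^{\frac{s-1}{s}}\rho')$, which satisfies $d^{\frac{s-1}{s}}\rho'<\rho''<\rho$ and $\rho-\rho''=\tfrac12(\rho-d^{\frac{s-1}{s}}\rho')$, and I would observe that the second hypothesis on $u$ is exactly the bound needed to apply (2) with $\rho$ replaced by $\rho''$. For (3) this yields $\|f\circ(\id+u)-f\|_{\CC_{\rho'}}\leq\|Df\|_{\CC_{\rho''}}\|u\|_{\CC_{\rho'}}$, and Lemma~\ref{lm:cauchy} with $n=1$ on the pair $(\rho,\rho'')$ bounds $\|Df\|_{\CC_{\rho''}}\leq(\rho-\rho'')^{-s}\|f\|_{\CC_\rho}=2^s(\rho-d^{\frac{s-1}{s}}\rho')^{-s}\|f\|_{\CC_\rho}$. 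For (4) I would first check (again via Lemma~\ref{lm:cauchy}, applied to $Df\in\CC_\rho$ with $n=1$) that $Df\in\CC'_{\rho''}$, then apply (2) with $f$ replaced by $Df$ and $\rho$ by $\rho''$ to get $\|Df\circ(\id+u)-Df\|_{\CC_{\rho'}}\leq\|D^2f\|_{\CC_{\rho''}}\|u\|_{\CC_{\rho'}}$, and finally bound $\|D^2f\|_{\CC_{\rho''}}\leq(\rho-\rho'')^{-s}\|Df\|_{\CC_\rho}$ by the same Cauchy estimate.

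The only genuine choice in the whole argument is the intermediate radius $\rho''$, dictated by the requirement that the constant come out as exactly $2^s/(\rho-d^{\frac{s-1}{s}}\rho')^s$, so I do not expect a serious obstacle. The fiddly points, which will require care rather than ideas, are the constant interplay between the product structure of $Df$ (whose norm is the operator norm $\max_j\sum_i\|\cdot\|_{\CC_{\rho'}}$) and the submultiplicativity of the algebra norm on $\CC_{s,\rho'}$, and the justification that $t\mapsto Df(\id+tu)$ is a well-behaved $\CC_{\rho'}$-valued curve so that the norm of its integral is dominated by the integral of its norms.
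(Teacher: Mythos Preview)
Your proposal is correct and follows essentially the same route as the paper: the paper also proves (1) by applying Theorem~\ref{th:MarcoSauzin} componentwise to the first partials, derives (2) from the integral identity $f\circ(\id+u)-f=\int_0^1 Df(\id+tu)\,u\,dt$ together with (1), and obtains (3) and (4) by introducing exactly your intermediate radius $\widetilde\rho=\tfrac12(\rho+d^{(s-1)/s}\rho')$ and combining (2) at level $\widetilde\rho$ with the Cauchy estimate of Lemma~\ref{lm:cauchy}. The only cosmetic difference is that for (4) the paper works with the scalar functions $\partial^\alpha f$, $|\alpha|=1$, rather than phrasing it as ``(2) applied to $Df$'', which amounts to the same computation.
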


\begin{proof}\hfill
\begin{enumerate}
\item
From the definitions of the norms and Theorem~\ref{th:MarcoSauzin} one gets
\begin{equation*}
\begin{split}
\|Df\circ(\id+u)\|_{\CC_{\rho'}}
&=
\max_{|\alpha|=1}
\left\|
(\partial^{\alpha}f)\circ(\id+u)\right\|_{\CC_{\rho'}} \\
&\leq
\max_{|\alpha|=1}
\left\|
\partial^{\alpha}f\right\|_{\CC_{\rho}}
=\|Df\|_{\CC_\rho} \\
&\leq
\|f\|_{\CC'_\rho}.
\end{split}
\end{equation*}
\item
Fix $x\in\Rr^d$ and write $g_i(t)=f_i(x+tu)$ with $g_i'(t)=Df_i(x+tu)\,u$. 
Then, $f_i(x+u)-f_i(x)=g_i(1)-g_i(0)=\int_0^1g_i'(t)\,dt$.
So,
$$
f(x+u)-f(x)=\int_0^1Df(x+tu)\, u\,dt.
$$
Using ~\eqref{estimate Df(id+u)} we obtain
\begin{equation*}
\begin{split}
\|f\circ(\id+u)-f\|_{\CC_{\rho'}} 
&\leq
\max_{0\leq t\leq1}\|Df\circ(\id+tu)\|_{\CC_{\rho'}} \|u\|_{\CC_{\rho'}} \\
&\leq
\|Df\|_{\CC_\rho}\|u\|_{\CC_{\rho'}}.
\end{split}
\end{equation*}
\item
The estimate~\eqref{estimate f(id+u)-f} with $\rho$ replaced by $\widetilde\rho:=(\rho+d^{\frac{s-1}{s}}\rho')/2 > d^{\frac{s-1}{s}} \rho'$ yields
$$
\|f\circ(\id + u)-f\|_{\CC_{\rho'}}\leq 
\|Df\|_{\CC_{\widetilde\rho}}\|u\|_{\CC_{\rho'}}
$$
and Lemma~\ref{lm:cauchy} implies that
$$
\|Df\|_{\CC_{\widetilde\rho}} \leq \frac{2^s}{(\rho-\widetilde\rho)^s}\|f\|_{\CC_\rho}.
$$
\item
By~\eqref{estimate f(id+u)-f} we get
\begin{equation*}
\begin{split}
\|Df\circ(\id + u)-Df\|_{\CC_{\rho'}}
&=
\max_{|\alpha|=1}
\|\partial^\alpha f\circ(\id + u)-\partial^\alpha f\|_{\CC_{\rho'}} \\
&\leq
\max_{|\alpha|=1}
\|D\partial^\alpha f\|_{\CC_{\widetilde\rho}} \|u\|_{\CC_{\rho'}} .
\end{split}
\end{equation*}
Finally, Lemma~\ref{lm:cauchy} implies that 
$$
\|D\partial^\alpha f\|_{\CC_{\widetilde\rho}}
=\max_{|\beta|=1}\|\partial^\beta\partial^\alpha f\|_{\CC_{\widetilde\rho}} 
\leq \frac{1}{\left(\rho-\widetilde\rho\right)^s} \|\partial^\alpha f\|_{\CC_\rho}.
$$
Therefore,
\begin{equation*}
\begin{split}
\|Df\circ(\id + u)-Df\|_{\CC_{\rho'}}
&\leq
\frac{2^s}{(\rho-d^{\frac{s-1}{s}}\rho')^s} \|Df\|_{\CC_{\rho}}\|u\|_{\CC_{\rho'}}.
\end{split}
\end{equation*}
\end{enumerate}
\end{proof}

\begin{lemma}\label{lemma Hn-id vf v1}
If for each $n\geq1$ we have $0<d^{\frac{s-1}{s}}\rho_{n}<\rho_{n-1}$ and $f_n-\id\in\CC_{\rho_n}$  such that
$$
\|f_n-\id\|_{\CC_{\rho_n}}\leq \frac{\rho_{n-1}^s}{d^{s-1}}-\rho_n^s,
$$ 
then
$$
\|f_1\circ\cdots\circ f_n -\id\|_{\CC_{\rho_n}}\leq \sum_{i=1}^n\|f_i-\id\|_{\CC_{\rho_i}}.
$$
\end{lemma}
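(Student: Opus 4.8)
The statement is a composition estimate for a finite chain of near-identity Gevrey maps, and the natural approach is induction on $n$ together with Lemma~\ref{lem:norm estimates}(2) applied at each step. The key observation is that composing on the outside (i.e.\ writing $f_1\circ\cdots\circ f_n = f_1\circ\cdots\circ f_{n-1}\circ f_n$) lets us peel off the innermost map $f_n$ and reduce to a two-term telescoping argument, but the telescoping is cleaner if we build the composition from the inside: set $g_n = f_1\circ\cdots\circ f_n$ and note $g_n = g_{n-1}\circ f_n$ only if the $f_i$ compose in the reverse order, so one must be careful about which associativity/ordering is used. I would instead define $g_0=\id$ and $g_k = f_k\circ g_{k-1}$ is not quite it either; the clean bookkeeping is to set $h_k := f_{k}\circ f_{k+1}\circ\cdots\circ f_n$ so that $h_k = f_k\circ h_{k+1}$ and $h_1 = f_1\circ\cdots\circ f_n$, and prove by downward induction on $k$ that $\|h_k - \id\|_{\CC_{\rho_n}}\leq \sum_{i=k}^n \|f_i-\id\|_{\CC_{\rho_i}}$.

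\textbf{Key steps.} First, observe that the hypothesis $\|f_i-\id\|_{\CC_{\rho_i}}\leq \rho_{i-1}^s/d^{s-1}-\rho_i^s$ is exactly the smallness condition needed to invoke Theorem~\ref{th:MarcoSauzin} and Lemma~\ref{lem:norm estimates} with the pair $(\rho_{i-1},\rho_i)$. Second, I would set up the telescoping identity
\begin{equation*}
h_k - \id = (f_k - \id)\circ h_{k+1} + (h_{k+1}-\id) + \bigl(f_k\circ h_{k+1} - h_{k+1} - (f_k-\id)\circ h_{k+1}\bigr),
\end{equation*}
but in fact the simplest route avoids this: write $h_k - \id = (f_k\circ h_{k+1} - f_k) + (f_k - \id)$ where we view $f_k = f_k\circ \id$, so that $h_k - \id = \bigl(f_k\circ(\id + (h_{k+1}-\id)) - f_k\bigr) + (f_k-\id)$. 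The first term is controlled by Lemma~\ref{lem:norm estimates}(2) provided $u := h_{k+1}-\id$ satisfies the smallness hypothesis there — and here the inductive hypothesis gives $\|h_{k+1}-\id\|_{\CC_{\rho_n}}\leq\sum_{i=k+1}^n\|f_i-\id\|_{\CC_{\rho_i}}$, which by a telescoping of the upper bounds $\rho_{i-1}^s/d^{s-1}-\rho_i^s$ is $\leq \rho_k^s/d^{s-1}-\rho_n^s \leq \rho_k^s/d^{s-1}-\rho_n^{s}$, matching the requirement of Lemma~\ref{lem:norm estimates}(2) applied with $(\rho,\rho') = (\rho_k,\rho_n)$ (using monotonicity $\|Df_k\|_{\CC_{\rho_k}}\leq 1$ once we know $f_k-\id$ is small — actually we only need $\|D(f_k - \id)\|\leq 1$, which follows from the Cauchy estimate Lemma~\ref{lm:cauchy} applied to $f_k-\id$, or more simply we bound $\|f_k\circ(\id+u)-f_k\|_{\CC_{\rho_n}}\leq \|D f_k\|_{\CC_{\rho_k}}\|u\|_{\CC_{\rho_n}}$ and absorb the operator-norm factor).

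\textbf{Main obstacle.} The delicate point is the matching of radii: Lemma~\ref{lem:norm estimates}(2) is stated for a pair $\rho' < d^{-(s-1)/s}\rho$, and at step $k$ we need it with $\rho = \rho_k$ and $\rho' = \rho_n$ (the final, smallest radius), not $\rho' = \rho_{k+1}$. This forces a uniform control: one must verify that the cumulative displacement $\sum_{i=k+1}^n\|f_i-\id\|_{\CC_{\rho_i}}$, measured in the \emph{final} norm $\CC_{\rho_n}$ (which is dominated by each $\CC_{\rho_i}$ norm since $\rho_n<\rho_i$), still fits inside the admissibility window $\rho_k^s/d^{s-1}-\rho_n^s$ coming from Theorem~\ref{th:MarcoSauzin}. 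This is precisely where the telescoping $\sum_{i=k+1}^n(\rho_{i-1}^s/d^{s-1}-\rho_i^s) \leq \rho_k^s/d^{s-1}$ does the work — here one uses $d^{(s-1)/s}\rho_i<\rho_{i-1}$, equivalently $\rho_i^s < \rho_{i-1}^s/d^{s-1}$, so each partial sum of the telescope stays below $\rho_k^s/d^{s-1}$ and in particular below $\rho_k^s/d^{s-1}-\rho_n^s$ with room to spare. Granting that, the downward induction closes: $\|h_k-\id\|_{\CC_{\rho_n}} \leq \|f_k\circ(\id+u)-f_k\|_{\CC_{\rho_n}} + \|f_k-\id\|_{\CC_{\rho_n}} \leq \|u\|_{\CC_{\rho_n}} + \|f_k-\id\|_{\CC_{\rho_k}} \leq \sum_{i=k}^n\|f_i-\id\|_{\CC_{\rho_i}}$, where the operator-norm factor $\|Df_k\|$ is handled by noting $Df_k = \id + D(f_k-\id)$ and that $\|D(f_k-\id)\|_{\CC_{\rho_k}}$ is small (or, if one wants to avoid even that, one runs the whole argument with $\CC'$-norms and the $\CC'$-version of the composition lemma, paying a harmless constant). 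Taking $k=1$ gives the claim.
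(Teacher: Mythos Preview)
Your downward induction and the telescoping check on the radii are both fine, but the estimate at the heart of the induction step does not close. You write
\[
\|h_k-\id\|_{\CC_{\rho_n}} \leq \|f_k\circ(\id+u)-f_k\|_{\CC_{\rho_n}} + \|f_k-\id\|_{\CC_{\rho_n}} \leq \|u\|_{\CC_{\rho_n}} + \|f_k-\id\|_{\CC_{\rho_k}},
\]
invoking Lemma~\ref{lem:norm estimates}(2). But that lemma gives $\|f_k\circ(\id+u)-f_k\|_{\CC_{\rho_n}}\leq \|Df_k\|_{\CC_{\rho_k}}\|u\|_{\CC_{\rho_n}}$, and $\|Df_k\|_{\CC_{\rho_k}} = \|I + D(f_k-\id)\|_{\CC_{\rho_k}} \geq 1$, with equality only when $f_k=\id$. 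Your remark that ``$\|D(f_k-\id)\|_{\CC_{\rho_k}}$ is small'' does not help: the hypotheses give you only $\|f_k-\id\|_{\CC_{\rho_k}}$, not its derivative in the \emph{same} radius, and a Cauchy estimate would force a drop from $\rho_k$ to something strictly smaller. Even granting a small derivative you would get $(1+\epsilon_k)\|u\|$, and iterating produces a product of $(1+\epsilon_k)$'s in front of the sum, not the clean $\sum_i\|f_i-\id\|_{\CC_{\rho_i}}$.

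The fix is not to estimate $f_k\circ(\id+u)-f_k$ as a whole but to expand it: since $f_k = \id + \psi_k$ with $\psi_k := f_k-\id$, one has directly
\[
h_k-\id \;=\; (h_{k+1}-\id) + \psi_k\circ\bigl(\id + (h_{k+1}-\id)\bigr),
\]
and now Theorem~\ref{th:MarcoSauzin} (not Lemma~\ref{lem:norm estimates}(2)) applied to $\psi_k\in\CC_{\rho_k}$ with $u=h_{k+1}-\id$ gives $\|\psi_k\circ(\id+u)\|_{\CC_{\rho_n}}\leq \|\psi_k\|_{\CC_{\rho_k}}$, with no derivative factor. Your telescoping argument then supplies exactly the needed smallness $\|u\|_{\CC_{\rho_n}}\leq \rho_k^s/d^{s-1}-\rho_n^s$, and the induction closes cleanly.

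The paper takes a shorter route: upward induction, peeling $f_n$ from the right. Writing $\varphi_n=f_n-\id$ one has
\[
f_1\circ\cdots\circ f_n-\id=\varphi_n+(f_1\circ\cdots\circ f_{n-1}-\id)\circ(\id+\varphi_n),
\]
and Theorem~\ref{th:MarcoSauzin} with $(\rho,\rho')=(\rho_{n-1},\rho_n)$ bounds the second term by $\|f_1\circ\cdots\circ f_{n-1}-\id\|_{\CC_{\rho_{n-1}}}$. The smallness hypothesis needed is \emph{exactly} the assumption on $\varphi_n$, so no telescoping of radii is required at all.
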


\begin{proof}
By writing $\varphi_n=f_n-\id \in\CC_{\rho_n}$, it is simple to check that
$$
f_1\circ\cdots\circ f_n-\id=\varphi_n+(f_1\circ\cdots\circ f_{n-1}-\id)\circ(\id+\varphi_n).
$$
Thus, by Theorem~\ref{th:MarcoSauzin},
$$
\|f_1\circ\cdots\circ f_n-\id\|_{\CC_{\rho_n}}\leq \|\varphi_n\|_{\CC_{\rho_n}}+\|f_1\circ\cdots\circ f_{n-1}-\id\|_{\CC_{\rho_{n-1}}}.
$$
The claim follows immediately.
\end{proof}

\begin{lemma}\label{lemma Hn-Hn-1 v1}
If for each $n\geq1$ we have $0<d^{\frac{s-1}{s}}\rho_{n}<\rho_{n-1}$ and $f_n-\id\in\CC_{\rho_n}$ such that
$$
\|f_n-\id\|_{\CC_{\rho_n}}
\leq 
\frac{(\rho_{n-1}+d^{\frac{s-1}{s}}\rho_n)^s}{2^sd^{s-1}}-\rho_n^s,
$$
then 
\begin{equation}\label{estimate Hn-Hn-1 v1}
\|f_1\circ\cdots\circ f_{n}-f_1\circ\dots\circ f_{n-1}\|_{\CC_{\rho_{n}}}
\leq
\left(1+ 
\frac{2^s}{(\rho_{n-1}-d^{\frac{s-1}{s}}\rho_n)^s}
\sum_{i=1}^{n-1}
\|f_i-\id\|_{\CC_{\rho_i}}
\right)
 \|f_{n}-\id\|_{\CC_{\rho_n}}.
\end{equation}
\end{lemma}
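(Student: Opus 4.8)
The plan is to mimic the proof of Lemma~\ref{lemma Hn-id vf v1}, using the same telescoping identity for compositions but now estimating the increment more carefully by splitting off the linear part of $f_1\circ\cdots\circ f_{n-1}-\id$. Write $\varphi_i=f_i-\id\in\CC_{\rho_i}$ and set $H_{n-1}=f_1\circ\cdots\circ f_{n-1}$. The key algebraic identity is
\begin{equation*}
H_{n}-H_{n-1}
=H_{n-1}\circ(\id+\varphi_n)-H_{n-1}
=(H_{n-1}-\id)\circ(\id+\varphi_n)-(H_{n-1}-\id)+\varphi_n,
\end{equation*}
so that the problem reduces to bounding $\|(H_{n-1}-\id)\circ(\id+\varphi_n)-(H_{n-1}-\id)\|_{\CC_{\rho_n}}$ and adding $\|\varphi_n\|_{\CC_{\rho_n}}$.

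For that first term I would apply Lemma~\ref{lem:norm estimates}(3) with $f$ replaced by $H_{n-1}-\id$, the roles of $\rho,\rho'$ played by $\rho_{n-1},\rho_n$, and $u=\varphi_n$. This requires the smallness hypothesis $\|\varphi_n\|_{\CC_{\rho_n}}\leq (\rho_{n-1}+d^{(s-1)/s}\rho_n)^s/(2^sd^{s-1})-\rho_n^s$, which is exactly the assumption in the statement, and it also requires knowing that $H_{n-1}-\id\in\CC_{\rho_{n-1}}$, which follows from Lemma~\ref{lemma Hn-id vf v1} (whose hypotheses are implied by the present ones since the right-hand side of the present bound is smaller than $\rho_{n-1}^s/d^{s-1}-\rho_n^s$). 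This yields
\begin{equation*}
\|(H_{n-1}-\id)\circ(\id+\varphi_n)-(H_{n-1}-\id)\|_{\CC_{\rho_n}}
\leq \frac{2^s}{(\rho_{n-1}-d^{(s-1)/s}\rho_n)^s}\,\|H_{n-1}-\id\|_{\CC_{\rho_{n-1}}}\,\|\varphi_n\|_{\CC_{\rho_n}}.
\end{equation*}
Then Lemma~\ref{lemma Hn-id vf v1} bounds $\|H_{n-1}-\id\|_{\CC_{\rho_{n-1}}}$ by $\sum_{i=1}^{n-1}\|\varphi_i\|_{\CC_{\rho_i}}$, and combining with the leftover $\|\varphi_n\|_{\CC_{\rho_n}}$ gives precisely~\eqref{estimate Hn-Hn-1 v1}.

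The main thing to check is the bookkeeping on which Cauchy/composition estimate applies at which radius: one must verify that the hypothesis $\|\varphi_n\|_{\CC_{\rho_n}}\leq (\rho_{n-1}+d^{(s-1)/s}\rho_n)^s/(2^sd^{s-1})-\rho_n^s$ indeed implies the hypothesis $\|\varphi_n\|_{\CC_{\rho_n}}\leq \rho_{n-1}^s/d^{s-1}-\rho_n^s$ needed to invoke Lemma~\ref{lemma Hn-id vf v1}; this is immediate since $\rho_{n-1}+d^{(s-1)/s}\rho_n<2\rho_{n-1}$, so $(\rho_{n-1}+d^{(s-1)/s}\rho_n)^s/(2^sd^{s-1})<\rho_{n-1}^s/d^{s-1}$. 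I expect no genuine obstacle here; the only subtlety is that Lemma~\ref{lem:norm estimates}(3) as stated is for $f\in\CC_\rho$ (not $\CC'_\rho$), so applying it to $H_{n-1}-\id$ requires only the membership $H_{n-1}-\id\in\CC_{\rho_{n-1}}$ supplied by Lemma~\ref{lemma Hn-id vf v1}, which is consistent.
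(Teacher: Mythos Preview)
Your argument is correct. The paper's proof uses the finer telescoping identity
\[
h_n-h_{n-1}=\varphi_n+\sum_{i=1}^{n-1}\bigl(\varphi_i\circ F_{i,n-1}\circ(\id+\varphi_n)-\varphi_i\circ F_{i,n-1}\bigr),
\qquad F_{i,m}=f_{i+1}\circ\cdots\circ f_m,
\]
then applies Lemma~\ref{lem:norm estimates}(3) to each summand and Theorem~\ref{th:MarcoSauzin} iteratively to reduce $\|\varphi_i\circ F_{i,n-1}\|_{\CC_{\rho_{n-1}}}$ to $\|\varphi_i\|_{\CC_{\rho_i}}$. Your version collapses this sum into the single term $H_{n-1}-\id$ and invokes Lemma~\ref{lemma Hn-id vf v1} as a black box, which is slightly cleaner since that lemma already packages the iterated composition estimate. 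The two arguments are equivalent in content; yours avoids re-deriving part of Lemma~\ref{lemma Hn-id vf v1} inside the proof, at the cost of a one-line check that the present smallness hypothesis implies the one needed there (which you handle correctly).
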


\begin{proof}
Write $h_n=f_1\circ\cdots\circ f_{n}$ and $\varphi_n=f_n-\id \in\CC_{\rho_n}$ for any $n\geq1$.
It is simple to check that
\begin{equation*}
\begin{split}
h_{n}-h_{n-1}
&=
\varphi_{n} + 
\sum_{i=1}^{n-1}\left(
\varphi_i\circ F_{i,n}
-\varphi_i\circ F_{i,n-1}
\right),
\end{split}
\end{equation*}
where 
$$
F_{m_1,m_2}:=f_{m_1+1}\circ\cdots\circ f_{m_2},\quad m_1< m_2,
$$
and $F_{m,m}=\id$.
Clearly, $F_{i,n}=F_{i,n-1}\circ f_n$.
So,
$$
h_{n}-h_{n-1}=
\varphi_{n} + 
\sum_{i=1}^{n-1}\left(
\varphi_i\circ F_{i,n-1}\circ(\id+\varphi_n)
-\varphi_i\circ F_{i,n-1}
\right).
$$

From Lemma~\ref{lem:norm estimates}, 
$$
\|\varphi_i\circ F_{i,n}-\varphi_i\circ F_{i,{n-1}}\|_{\CC_{\rho_n}}
\leq 
\frac{2^s}{(\rho_{n-1}-d^{\frac{s-1}{s}}\rho_n)^s}
\|\varphi_i\circ F_{i,n-1}\|_{\CC_{\rho_{n-1}}}
\|\varphi_{n}\|_{\CC_{\rho_n}}.
$$
Since $\varphi_i\circ F_{i,n-1}=\varphi_i\circ F_{i,n-2}\circ(\id+\varphi_{n-1})$,
by Theorem~\ref{th:MarcoSauzin},
\begin{equation*}
\begin{split}
\|\varphi_i\circ F_{i,n-1}\|_{\CC_{\rho_{n-1}}}
&\leq 
\|\varphi_i\circ F_{i,n-2}\|_{\CC_{\rho_{n-2}}} \\
&\leq
\|\varphi_i\|_{\CC_{\rho_{i}}}.
\end{split}
\end{equation*}

Finally,
$$
\|h_{n}-h_{n-1}\|_{\CC_{\rho_n}}
\leq
\left(1+ 
\frac{2^s}{(\rho_{n-1}-d^{\frac{s-1}{s}}\rho_n)^s}
\sum_{i=1}^{n-1}
\|\varphi_i\|_{\CC_{\rho_i}}
\right)
 \|\varphi_{n}\|_{\CC_{\rho_n}}.
$$
\end{proof}

\subsection{Decay of Fourier coefficients}

Any $f\in C^{\infty}(\Tt^d,\Rr^d)$ can be represented in Fourier series as 
$$
f(\vecx)=\sum_{\veck\in\Zz^d}f_{\veck}e^{i \veck\cdot\vecx},
$$
where
$$
f_k=\frac{1}{(2\pi)^d}\int_{\Tt^d}f(\vecx)e^{-ik\cdot x}\,d\vecx.
$$
We write the constant Fourier mode of $f$ through the projection
\begin{equation}
\Ee f=f_0.
\end{equation}

Let $|\veck|=|k_1|+\cdots+|k_d|$ for $\veck\in\Zz^d$. The following is a well-known estimate, we include here a proof only for the convenience of the reader. 

\begin{lemma}[Decay of Fourier coefficients]\label{lem:decay}
If $f\in\CC_{\rho}$, then 
$$
|f_\veck|\leq \Delta\|f\|_{\CC_{\rho}}e^{-\rho|\veck|^{1/s}},\quad\veck\in\Zz^d,
$$
where 
$$
\Delta:=(2\pi)^{-d}\left(1-s^{-\frac{s}{s-1}}\right)^{-(s-1)d}<\left(\frac{e}{2\pi}\right)^d.
$$
\end{lemma}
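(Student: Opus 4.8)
The plan is to extract the bound on $|f_{\veck}|$ directly from the definition of the norm $\|f\|_{\CC_{s,\rho}}$ together with the elementary Fourier bound $|f_{\veck}|\leq \|\partial^{\alpha}f\|_{C^0}/|\veck|^{|\alpha|}$ for each coordinate, optimized over $|\alpha|$. First I would note that for any scalar periodic function $g\in \CC_{s,\rho}(\Tt^d)$ and any multi-index $\alpha$, integration by parts gives $|k^{\alpha}|\,|g_{\veck}| = |(\partial^{\alpha}g)_{\veck}| \leq \|\partial^{\alpha}g\|_{C^0}$, where $k^{\alpha}=k_1^{\alpha_1}\cdots k_d^{\alpha_d}$. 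Combining this with the pointwise Gevrey estimate $\|\partial^{\alpha}g\|_{C^0}\leq \|g\|_{\CC_{s,\rho}}\,\alpha!^{s}\rho^{-s|\alpha|}$ (which is immediate from the definition of $\|\cdot\|_{\CC_{s,\rho}}$, since a single term of the sum is bounded by the whole sum), I get $|g_{\veck}|\leq \|g\|_{\CC_{s,\rho}}\,\alpha!^{s}\rho^{-s|\alpha|}/|k^{\alpha}|$ for every $\alpha$.

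Next I would reduce to a one-dimensional optimization. For each coordinate $j$ with $k_j\neq 0$, choosing $\alpha = n e_j$ gives $|g_{\veck}|\leq \|g\|_{\CC_{s,\rho}}\,(n!)^{s}\rho^{-sn}|k_j|^{-n}$; but to get the exponent $|\veck|^{1/s}$ I instead distribute the derivatives: with $|\alpha|=n$ chosen so that $|k^{\alpha}|$ is as large as possible one has $|k^{\alpha}|\geq (|\veck|/d)^{?}$—this is slightly awkward, so the cleaner route is to pick the single largest coordinate, say $|k_{j_0}|\geq |\veck|/d$, take $\alpha=n e_{j_0}$, and then use Stirling, $n!\leq n^{n}$ (or $n!\le e\,n^{n}e^{-n}\cdot\sqrt{n}$ sharpened appropriately), to bound $(n!)^{s}\rho^{-sn}|k_{j_0}|^{-n}\leq \big( n^{s}\rho^{-s}(d/|\veck|)\big)^{n}$. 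Minimizing $h(n)=n\log\!\big(n^{s}\rho^{-s}d/|\veck|\big)$ over $n>0$ gives the optimal $n_{*}$ with $\log n_{*}^{s} = \log(|\veck|/d)+\log\rho^{s}-s$, i.e. roughly $n_{*}\sim (|\veck|/d)^{1/s}\rho e^{-1}$, and substituting yields $|g_{\veck}|\leq \|g\|_{\CC_{s,\rho}}\,e^{-s n_{*}} = \|g\|_{\CC_{s,\rho}}\exp\!\big(-s\rho e^{-1}(|\veck|/d)^{1/s}\big)$ up to the tracking of the precise constant. The constant $\Delta$ claimed in the statement, with its factor $(2\pi)^{-d}$ and the term $(1-s^{-s/(s-1)})^{-(s-1)d}$, indicates that the intended argument is in fact slightly different in bookkeeping: the $(2\pi)^{-d}$ comes from the normalization of $f_{\veck}$ and the factor $(1-s^{-s/(s-1)})^{-(s-1)d}$ arises from summing a geometric-type series in the $d$ coordinates after optimizing each one separately rather than picking a single coordinate; so I would actually split $|k^{\alpha}| = \prod_j |k_j|^{\alpha_j}$, bound each factor $|k_j|^{-\alpha_j}(\alpha_j!)^{s}\rho^{-s\alpha_j}$ by its minimum over $\alpha_j\in\Nn_0$, and then multiply, which produces $e^{-\rho\sum_j|k_j|^{1/s}}\geq e^{-\rho|\veck|^{1/s}}$ (using convexity, i.e.\ $\sum_j|k_j|^{1/s}\geq(\sum_j|k_j|)^{1/s}$) and the stated constant from a sum $\sum_{m\geq 0} s^{-sm/(s-1)}$-type geometric series.

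Finally, for the vector-valued statement $f\in\CC_{\rho}=\CC_{s,\rho}(\Tt^d,\Rr^d)$, I would simply apply the scalar bound to each component $f_i$ and use $|f_{\veck}| = \sum_i|f_{i,\veck}|$ (the $\ell_1$ norm) together with $\|f\|_{\CC_{s,\rho}}=\sum_i\|f_i\|_{\CC_{s,\rho}}$, so the bound transfers verbatim with the same $\Delta$. The only genuinely delicate point—the main obstacle—is getting the constant $\Delta$ exactly as stated rather than a cruder version: this requires the careful per-coordinate optimization $\min_{n\in\Nn_0}(n!)^{s}\rho^{-sn}|k_j|^{-n}$ with a sharp form of Stirling's inequality and then summing the resulting geometric series in a way that yields precisely $(1-s^{-s/(s-1)})^{-(s-1)d}$, plus checking the final numerical comparison $\Delta<(e/2\pi)^d$. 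Everything else is routine integration by parts and convexity.
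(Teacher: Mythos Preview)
Your approach differs from the paper's in a key structural way, and as a result it will not produce the stated constant $\Delta$. You bound $|f_{\veck}|$ by picking a \emph{single} multi-index $\alpha$ and using $|f_{\veck}|\leq (2\pi)^{-d}\alpha!^{s}\rho^{-s|\alpha|}/|k^{\alpha}|\cdot\|f\|_{\CC_{\rho}}$, then minimize over $\alpha$ via Stirling. The paper instead \emph{sums} over all $\alpha$: from $(2\pi)^{d}|k^{\alpha}|\,|f_{\veck}|\leq \|\partial^{\alpha}f\|_{C^{0}}$ one gets
\[
(2\pi)^{d}|f_{\veck}|\sum_{\alpha}\frac{\rho^{s|\alpha|}}{\alpha!^{s}}|k^{\alpha}|\leq \|f\|_{\CC_{\rho}},
\]
and then lower-bounds the sum. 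Since $\sum_{\alpha}a_{\alpha}\geq \max_{\alpha}a_{\alpha}$, the summed version is automatically sharper than any single-$\alpha$ choice. More importantly, after factoring the sum as $\prod_{j}\sum_{n\geq 0}\rho^{sn}|k_{j}|^{n}/(n!)^{s}$, the paper applies H\"older's inequality with exponents $s$ and $t=s/(s-1)$ to the pair $x_{n}=(\rho|k_{j}|^{1/s})^{n}/n!$ and $y_{n}=s^{-n}$, which yields
\[
e^{\rho|k_{j}|^{1/s}}\leq \Bigl(\sum_{n\geq 0}s^{-nt}\Bigr)^{s/t}\sum_{n\geq 0}\frac{\rho^{sn}|k_{j}|^{n}}{(n!)^{s}}.
\]
The geometric sum $\sum_{n\geq 0}s^{-nt}=(1-s^{-s/(s-1)})^{-1}$ is exactly where the factor $(1-s^{-s/(s-1)})^{-(s-1)d}$ in $\Delta$ comes from; it is an artefact of this specific H\"older splitting, not of a Stirling optimization. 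Your proposed route of ``sharp Stirling plus a geometric series'' will not land on this expression: minimizing $(n!)^{s}\rho^{-sn}|k_{j}|^{-n}$ over integers $n$ gives (up to polynomial factors) roughly $e^{-s\rho|k_{j}|^{1/s}}$, which is qualitatively fine---indeed a stronger exponent---but carries no geometric series at all, and the constant you would obtain bears no relation to $\Delta$.

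In short: your outline proves a decay bound $|f_{\veck}|\leq C\|f\|_{\CC_{\rho}}e^{-\rho|\veck|^{1/s}}$ for \emph{some} constant $C$, but not the lemma as stated. To get $\Delta$ exactly you must replace the ``minimize over $\alpha$'' step by ``sum over $\alpha$ and apply H\"older with $y_{n}=s^{-n}$''. The final inequality $\Delta<(e/2\pi)^{d}$ is then just the observation that $h(s)=(1-s^{-s/(s-1)})^{-(s-1)}$ is increasing with $h(1)=1$ and $\lim_{s\to\infty}h(s)=e$.
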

\begin{proof}
Since $|(\partial^\alpha f)_\veck|\leq \frac{1}{(2\pi)^d}\|\partial^\alpha f\|_{C^0}$ we have
$$
\|f\|_{\CC_{\rho}}=\sum_{\alpha\in\Nn_0^d}\frac{\rho^{s|\alpha|}}{\alpha!^s}\|\partial^\alpha f\|_{C^0}\geq (2\pi)^d\sum_{\alpha\in\Nn_0^d}\frac{\rho^{s|\alpha|}}{\alpha!^s}|(\partial^\alpha f)_\veck|\,.
$$
Taking into account that $(\partial^\alpha f)_\veck=\prod_j( i k_j)^{\alpha_j}f_\veck$ we get,
\begin{equation}\label{estimate fourier coefficient}
(2\pi)^d|f_\veck|\sum_{\alpha\in\Nn_0^d}\frac{\rho^{s|\alpha|}}{\alpha!^s}\veck^\alpha\leq \|f\|_{\CC_{\rho}},
\end{equation}
where $\veck^\alpha=\prod_{j=1}^d |k_j|^{\alpha_j}$.
Now we estimate the sum $\sum_{\alpha}\frac{\rho^{s|\alpha|}}{\alpha!^s}\veck^\alpha$ from below. Noticed that
$$
\sum_{\alpha\in\Nn_0^d}\frac{\rho^{s|\alpha|}}{\alpha!^s}\veck^\alpha=\prod_{j=1}^d\sum_{n=0}^\infty\frac{\rho^{sn}}{n!^s} |k_j|^{n}\,.
$$
In order to estimate the sum inside the product we recall the H\"{o}lder inequality. For any sequences of positive real numbers $(x_n)_{n\geq0}$ and $(y_n)_{n\geq0}$ we have
$$
\left(\sum_{n=0}^\infty x_n y_n\right)^s\leq  \left(\sum_{n=0}^\infty y_n^t\right)^{s/t}\sum_{n=0}^\infty x_n^s,
$$
where $t=\frac{s}{s-1}$. Taking 
$$
x_n=\frac{\left(\rho  |k_j|^{1/s} \right)^{n}}{n!}\quad\text{and}\quad y_n=\frac{1}{s^n}
$$
we get
$$
e^{\rho|k_j|^{1/s}}\leq h(s)\sum_{n=0}^\infty\frac{\rho^{sn}}{n!^s} |k_j|^{n},
$$
where $h(s):=\left(1-s^{\frac{s}{1-s}}\right)^{1-s}$. Notice that $h'(s)>0$, $h(1)=1$ and $\lim_{s\to\infty}h(s)=e$. So  $1\leq  h(s)< e$ for every $s\geq 1$.
Since $\sum_j|k_j|^{1/s}\geq |\veck|^{1/s}$, we get
$$
\sum_{\alpha\in\Nn_0^d}\frac{\rho^{s|\alpha|}}{\alpha!^s}\veck^\alpha\geq \frac{1}{h^d} e^{\rho|\veck|^{1/s}}.
$$
Using this lower bound in \eqref{estimate fourier coefficient} we obtain the desired estimate on the Fourier coefficients.
\end{proof}

\begin{lemma}\label{lem:mode}
For every $\rho>0$ and $\veck\in\Zz^d$, 
$$\|e^{i\veck\cdot\vecx}\|_{\CC_{\rho}}\leq e^{ d^{\frac{s-1}{s}}s\rho|\veck|^{\frac{1}{s}}}\quad\text{and}\quad\|e^{i\veck\cdot\vecx}\|'_{\CC_{\rho}}\leq (1+|\veck|)e^{ d^{\frac{s-1}{s}}s\rho|\veck|^{\frac{1}{s}}}.$$
\end{lemma}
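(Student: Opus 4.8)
The plan is to evaluate $\|e^{i\veck\cdot\vecx}\|_{\CC_\rho}$ directly from the definition of the Gevrey norm on a pure Fourier mode and then reduce the resulting sum to the two elementary inequalities \eqref{eq standard estimates2} and \eqref{eq standard estimates3}. Since $\partial^\alpha e^{i\veck\cdot\vecx}=\prod_{j=1}^d(ik_j)^{\alpha_j}\,e^{i\veck\cdot\vecx}$, one has $\|\partial^\alpha e^{i\veck\cdot\vecx}\|_{C^0}=\prod_{j=1}^d|k_j|^{\alpha_j}$, so the norm factorizes:
$$
\|e^{i\veck\cdot\vecx}\|_{\CC_\rho}
=\sum_{\alpha\in\Nn_0^d}\frac{\rho^{s|\alpha|}}{\alpha!^s}\prod_{j=1}^d|k_j|^{\alpha_j}
=\prod_{j=1}^d\left(\sum_{n=0}^\infty\frac{\rho^{sn}}{n!^s}|k_j|^{n}\right).
$$

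First I would estimate each one-dimensional factor. Writing $a_n=(\rho|k_j|^{1/s})^n/n!$, the inner sum equals $\sum_{n\ge0}a_n^{s}$, which by \eqref{eq standard estimates2} is bounded by $\bigl(\sum_{n\ge0}a_n\bigr)^s=e^{s\rho|k_j|^{1/s}}$. Taking the product over $j$ gives $\|e^{i\veck\cdot\vecx}\|_{\CC_\rho}\le e^{s\rho\sum_{j=1}^d|k_j|^{1/s}}$. Then \eqref{eq standard estimates3} applied with $a_i=|k_i|$ yields $\sum_{j=1}^d|k_j|^{1/s}\le d^{(s-1)/s}|\veck|^{1/s}$, and the first bound follows.

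For the second bound, recall $\|f\|'_{\CC_\rho}=\|f\|_{\CC_\rho}+\|Df\|_{\CC_\rho}$ with $\|Df\|_{\CC_\rho}=\max_{|\alpha|=1}\|\partial^\alpha f\|_{\CC_\rho}$. Since $\partial^{e_j}e^{i\veck\cdot\vecx}=ik_j e^{i\veck\cdot\vecx}$, we get $\|Df\|_{\CC_\rho}=\bigl(\max_j|k_j|\bigr)\,\|e^{i\veck\cdot\vecx}\|_{\CC_\rho}\le|\veck|\,e^{d^{\frac{s-1}{s}}s\rho|\veck|^{1/s}}$, using $\max_j|k_j|\le|\veck|$ and the first bound. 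Adding the two estimates gives $\|e^{i\veck\cdot\vecx}\|'_{\CC_\rho}\le(1+|\veck|)\,e^{d^{\frac{s-1}{s}}s\rho|\veck|^{1/s}}$.

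There is no real obstacle here: the statement is an immediate consequence of the factorization of the Gevrey norm on a single Fourier mode together with the two convexity-type inequalities already recorded. The only point requiring a little care is the order of application: one must use \eqref{eq standard estimates2} coordinatewise (to pass from $n!^{-s}$ to $e^{s\rho|k_j|^{1/s}}$) and only afterwards use \eqref{eq standard estimates3} globally (to replace $\sum_j|k_j|^{1/s}$ by $d^{(s-1)/s}|\veck|^{1/s}$); interchanging these, or attempting to bound the multi-index sum in one step, spoils the clean constant $d^{\frac{s-1}{s}}s\rho$ in the exponent.
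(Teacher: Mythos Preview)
Your proof is correct and follows essentially the same approach as the paper: factorize the Gevrey norm of a single Fourier mode into one-dimensional sums, apply \eqref{eq standard estimates2} coordinatewise to bound each factor by $e^{s\rho|k_j|^{1/s}}$, and then use \eqref{eq standard estimates3} to pass from $\sum_j|k_j|^{1/s}$ to $d^{(s-1)/s}|\veck|^{1/s}$. For the second inequality the paper simply remarks that it follows from the first and the definition of the norm, which is exactly the computation you spell out.
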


\begin{proof}
We will prove only the first inequality. The second follows directly from the first and the definition of the norm. Notice that,
\begin{align*}
\|e^{i\veck\cdot\vecx}\|_{\CC_\rho}&=\sum_{\alpha\in\Nn_0^d}\frac{\rho^{s|\alpha|}}{\alpha!^s}\prod_{j=1}^d|k_j|^{\alpha_j}\\
&= \sum_{n=0}^\infty \sum_{|\alpha|=n}\prod_{j=1}^d\frac{|k_j|^{\alpha_j}}{\alpha_j!^s}\rho^{s\alpha_j}\\
&=\prod_{j=1}^d\sum_{n=0}^\infty \left(\frac{|k_j|^{\frac{n}{s}}}{n!}\rho^{n}\right)^s\\
&\leq
\left(\prod_{j=1}^d\sum_{n=0}^\infty \frac{|k_j|^{\frac{n}{s}}}{n!}\rho^{n}\right)^s \\
&\leq e^{s\rho\sum_j|k_j|^{\frac{1}{s}}},
\end{align*}
where we have used the fact
\begin{equation*}\label{eq standard estimates1}
\begin{split}
\sum_{n=0}^\infty \sum_{|\alpha|=n}\prod_{i=1}^d a_i(\alpha_i) 
=
\prod_{i=1}^d 
\sum_{n=0}^\infty a_i(n) 
\end{split}
\end{equation*}
for any sequences $a_i$  and\eqref{eq standard estimates2}.

Since $\sum_j|k_j|^{\frac{1}{s}}\leq d^{\frac{s-1}{s}}|\veck|^\frac{1}{s}$ from~\eqref{eq standard estimates3} we obtain the claimed estimate.
\end{proof}

\subsection{Spaces $\FF_{s,\rho}$ and $\FF'_{s,\rho}$}
Lemma~\ref{lem:decay} motivates the following definition. Given $\rho>0$ let $\FF_{s,\rho}(\Tt^d)$ be the set of smooth functions $f\in C^{\infty}(\Tt^d)$ that satisfy
$$
\|f\|_{\FF_{s,\rho}}:=\sum_{\veck\in\Zz^d}|f_\veck|e^{\rho|\veck|^{1/s}}<\infty\,.
$$
Several properties of this norm are easy to establish. Firstly, $\|f\|_{C^0}\leq \|f\|_{\FF_{s,\rho}}$ for any $\rho>0$. Secondly, $\|f\|_{\FF_{s,\rho'}}\leq\|f\|_{\FF_{s,\rho}}$ for every $\rho'<\rho$. Moreover, it is simple to check that ${\FF_{s,\rho}}(\Tt^d)$ endowed with the norm $\|\cdot\|_\rho$ is a Banach algebra.

Given any $f=(f_1,\dots,f_d)\in C^\infty(\Tt^d,\Rr^d)$ we define the following norm,
$$
\|f\|_{\FF_{s,\rho}} := \|f_1\|_{\FF_{s,\rho}} + \cdots + \|f_d\|_{\FF_{s,\rho}}.
$$
Similarly, we denote by $\FF_{s,\rho}(\Tt^d,\Rr^d)$ the set of $\Rr^d$-valued $s$-Gevrey functions that satisfy $\|f\|_{\FF_{s,\rho}}<\infty$. Clearly, $\FF_{s,\rho}(\Tt^d,\Rr^d)$ is also a Banach space.

To control the derivatives of Gevrey functions it is convenient to introduce the following family of norms,
$$
\|f\|_{\FF'_{s,\rho}}:=\sum_{\veck\in\Zz^d}(1+|\veck|)|f_k|e^{\rho|\veck|^{1/s}}
$$
and define $\FF'_{s,\rho}(\Tt^d,\Rr^d)\subset\FF_{s,\rho}(\Tt^d,\Rr^d)$ to be the subset of Gevrey functions that have the above norm finite. Notice that,
$$
\|f\|_{\FF'_{s,\rho}}=\|f\|_{\FF_{s,\rho}}+\sum_{|\alpha|=1}\|\partial^\alpha f\|_{\FF_{s,\rho}}.
$$

To simplify the notation we shall denote these spaces by $\FF_{s,\rho}$ and $\FF'_{s,\rho}$, and when there is no need for the explicit dependence of $s$ we remove it from our notation.

It is clear that $\FF'_{\rho}$ is also a Banach space. Moreover,
$$
\|Df(h)\|_{\FF_{\rho}}\leq \|f\|_{\FF'_{\rho}}\|h\|_{\FF_{\rho}}.
$$ 
This means that $Df$ is a bounded operator on $\FF_{\rho}$ whenever $f\in\FF'_{\rho}$. We also denote by $\|Df\|_{\FF_{\rho}}$ its induced norm. 

Another useful property is the following upper-bound on the norm of the derivatives of a function. 

\begin{lemma}[Cauchy's estimate] \label{lem:Cauchy}Given $\rho'<\rho$ and $f\in \FF_{\rho}$, 
$$
\|\partial^\alpha f\|_{\FF_{\rho'}}\leq \left(\frac{d^{\frac{s-1}{s}}s}{\rho-\rho'}\right)^{s|\alpha|}\alpha!^s\|f\|_{\FF_{\rho}}.
$$
\end{lemma}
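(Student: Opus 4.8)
The plan is to prove the Cauchy estimate directly from the Fourier-side definition of the norms, using the elementary inequality $\max_{x\ge 0} x^m e^{-\beta x} = (m/(e\beta))^m$ to absorb the polynomial growth coming from differentiation into a loss of radius $\rho-\rho'$.

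First I would observe that for $f=\sum_{\veck}f_\veck e^{\i\veck\cdot\vecx}$, the partial derivative $\partial^\alpha f$ has Fourier coefficients $(\partial^\alpha f)_\veck = \prod_{j=1}^d(\i k_j)^{\alpha_j}f_\veck$, so that
$$
\|\partial^\alpha f\|_{\FF_{\rho'}} = \sum_{\veck\in\Zz^d} \Bigl(\prod_{j=1}^d|k_j|^{\alpha_j}\Bigr)\,|f_\veck|\,e^{\rho'|\veck|^{1/s}}.
$$
Comparing with $\|f\|_{\FF_\rho} = \sum_\veck |f_\veck| e^{\rho|\veck|^{1/s}}$, it suffices to bound, for each $\veck$,
$$
\prod_{j=1}^d|k_j|^{\alpha_j}\,e^{-(\rho-\rho')|\veck|^{1/s}} \le \left(\frac{d^{\frac{s-1}{s}}s}{\rho-\rho'}\right)^{s|\alpha|}\alpha!^s.
$$
To do this I would write $\prod_j|k_j|^{\alpha_j}\le |\veck|^{|\alpha|}$ (since each $|k_j|\le|\veck|$), set $x=|\veck|^{1/s}$ and $m=s|\alpha|$, and use the optimization $x^m e^{-(\rho-\rho')x}\le (m/(e(\rho-\rho')))^m$. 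This gives a bound of the form $\bigl(s|\alpha|/(e(\rho-\rho'))\bigr)^{s|\alpha|}$.

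The remaining step is purely combinatorial: to convert $(s|\alpha|)^{s|\alpha|}$ into the stated form involving $\alpha!^s$ and the factor $d^{(s-1)|\alpha|}$. Here I would use $|\alpha|^{|\alpha|}\le e^{|\alpha|}\,|\alpha|!$ (from $|\alpha|!\ge |\alpha|^{|\alpha|}e^{-|\alpha|}$) to cancel the $e^{-s|\alpha|}$, reducing to $\bigl(s/(\rho-\rho')\bigr)^{s|\alpha|}|\alpha|!^s$, and then the multinomial inequality $|\alpha|!\le d^{|\alpha|}\alpha!$ (equivalently $|\alpha|! = \sum_{|\beta|=|\alpha|}\binom{|\alpha|}{\beta}\alpha! \cdot(\text{something})$, or more simply $|\alpha|!/\alpha! = \binom{|\alpha|}{\alpha_1,\dots,\alpha_d}\le d^{|\alpha|}$) to produce the factor $d^{s|\alpha|}$, of which $d^{|\alpha|}$ is absorbed to match the exponent $d^{\frac{s-1}{s}s|\alpha|}=d^{(s-1)|\alpha|}$ in the statement. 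Summing the per-mode bound over $\veck$ against $|f_\veck|e^{\rho|\veck|^{1/s}}$ then yields the claim.

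I do not expect a serious obstacle here; the only mild care needed is bookkeeping of the constants so that the combination $e^{-1}\cdot(\text{from }|\alpha|^{|\alpha|})$ and $d^{|\alpha|}$ (from the multinomial bound) fit exactly inside the stated constant $d^{\frac{s-1}{s}}s/(\rho-\rho')$ raised to the power $s|\alpha|$ — in particular one should check that the exponent of $d$ works out to $(s-1)|\alpha|$ rather than $s|\alpha|$, which it does because one power $d^{|\alpha|}$ is exactly consumed by the multinomial coefficient bound and $s-1$ copies remain per unit of $|\alpha|$ only after the $s$-th root structure; if the constant is slightly lossy one simply notes that the inequality $|\alpha|!\le d^{|\alpha|}\alpha!$ already suffices and the stated form is an upper bound. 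No new tools beyond the elementary single-variable maximization and standard factorial inequalities are required.
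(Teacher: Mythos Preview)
Your optimization argument is sound and does prove \emph{a} Cauchy estimate, but not the one stated: tracking your constants gives
\[
\Bigl(\tfrac{s|\alpha|}{e(\rho-\rho')}\Bigr)^{s|\alpha|}
\le \Bigl(\tfrac{s}{\rho-\rho'}\Bigr)^{s|\alpha|}|\alpha|!^{\,s}
\le \Bigl(\tfrac{ds}{\rho-\rho'}\Bigr)^{s|\alpha|}\alpha!^{\,s},
\]
which is larger than the claimed bound by exactly $d^{|\alpha|}$. Your final sentence has the inequality backwards: the stated constant $d^{(s-1)/s}$ is \emph{smaller} than the $d$ you obtain (strictly, once $d\ge 2$), so you cannot pass from your bound to the lemma as written. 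In particular for $s=1$ the lemma asserts the sharp analytic Cauchy estimate with no power of $d$, while your route produces an extraneous $d^{|\alpha|}$.

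The paper avoids this loss by optimizing coordinate-by-coordinate rather than in $|\veck|$. It first uses the concavity inequality
\[
|\veck|^{1/s}\ \ge\ d^{-(s-1)/s}\sum_{j=1}^d|k_j|^{1/s}
\]
to split the exponential, and then maximizes each factor $|k_j|^{\alpha_j}e^{-d^{-(s-1)/s}(\rho-\rho')|k_j|^{1/s}}$ separately. Stirling in the form $(\alpha_j/e)^{\alpha_j}\le\alpha_j!$ then produces $\alpha_j!^{\,s}$ directly, so the product is $\alpha!^{\,s}$ with no detour through $|\alpha|!$ and no multinomial loss; the factor $d^{(s-1)/s}$ appears exactly once per derivative from the splitting step. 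That coordinate-wise decomposition is the missing idea needed to reach the stated constant.
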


\begin{proof}
Note that 
\begin{align*}
\|\partial^\alpha f\|_{\FF_{\rho'}}&=\sum_{\veck\in\Zz^d}|f_\veck|\prod_{j=1}^d |k_j|^{\alpha_j}e^{\rho'|\veck|^{1/s}}\\
&\leq\sum_{\veck\in\Zz^d}|f_\veck|\left(\prod_{j=1}^d |k_j|^{\alpha_j}e^{-d^{\frac{1-s}{s}}(\rho-\rho')|k_j|^{1/s}}\right)e^{\rho|\veck|^{1/s}}
\end{align*}
where we have used the inequality $d^{\frac{s-1}{s}}|\veck|^{1/s}\geq |k_1|^{1/s}+\cdots+|k_d|^{1/s}$. The function $x\mapsto x^{\alpha_j}e^{-d^{\frac{1-s}{s}}(\rho-\rho')x^{1/s}}$ defined for $x\geq0$ attains its maximum at $x^*=\left(\frac{\alpha_jd^{\frac{s-1}{s}}s}{\rho-\rho'}\right)^{s}$ with value $\left(\frac{\alpha_jd^{\frac{s-1}{s}}s}{e(\rho-\rho')}\right)^{s\alpha_j}$. Since $(\alpha_j/e)^{\alpha_j}\leq \alpha_j!$ by Stirling's approximation, we get
\begin{align*}
\|\partial^\alpha f\|_{\FF_{\rho'}} &\leq\prod_{j=1}^d  \left(\frac{\alpha_j d^{\frac{s-1}{s}}s}{e(\rho-\rho')}\right)^{s\alpha_j} \sum_{\veck\in\Zz^d}|f_\veck|e^{\rho|\veck|^{1/s}}\\
&\leq \left(\frac{d^{\frac{s-1}{s}}s}{\rho-\rho'}\right)^{s|\alpha|}\alpha!^s\|f\|_{\FF_{\rho}}\,.
\end{align*}
\end{proof}

In the following lemma we show how the norms of the various Banach spaces are related.
To simplify the notation we define the constants:
\begin{equation}\label{def: beta C_nu}
\beta:=d^{\frac{s-1}{s}}s\quad\text{and}\quad C_\nu:=\sum_{k\in\Zz^d}e^{-\nu|\veck|^{1/s}},
\end{equation}
where $\nu>0$ \footnote{Notice that $C_\nu$ can be bounded from above as follows,
$$
C_\nu\leq 1+ \left(\frac{\pi^2}{3}\right)^d\left(\frac{\beta}{\nu}\right)^{2sd}.
$$}.

\begin{lemma}[Inclusions]\label{lem:equivalence_norms}
Let $\rho'>0$ and $\nu>0$. The following holds:
\begin{enumerate}
\item If $\rho\geq\beta\rho'+\nu$, then 
$$
\|f\|_{\CC_{\rho'}}\leq \|f\|_{\FF_{\rho}}
\te
{and}\|f\|_{\CC'_{\rho'}}\leq \|f\|_{\FF'_{\rho}}.
$$
\item If $\rho\geq\rho'+\nu$, then
$$
\|f\|_{\FF_{\rho'}}\leq C_\nu\|f\|_{\CC_{\rho}}\te{and}\|f\|_{\FF'_{\rho'}}\leq C_\nu\|f\|_{\CC'_{\rho}}.
$$
\end{enumerate}
\end{lemma}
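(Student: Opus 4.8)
The plan is to reduce everything to two elementary estimates relating the exponential weights $e^{\rho|\veck|^{1/s}}$ appearing in the $\FF$-norms to the factorial weights $\rho'^{s|\alpha|}/\alpha!^s$ appearing in the $\CC$-norms, with the constant $\beta=d^{(s-1)/s}s$ as the conversion factor. Throughout I would work coordinate-wise: since both pairs of norms are defined as sums over the $d$ components $f_i$, and the derivative norms are the maxima/sums over $|\alpha|=1$ of the corresponding component derivatives, it suffices to prove each inequality for a scalar $f\in C^\infty(\Tt^d)$; the $\Rr^d$-valued and the primed (derivative-including) versions then follow by summing and by the observation $\partial^\alpha$ merely multiplies the $\veck$-th Fourier coefficient by $\prod_j(ik_j)^{\alpha_j}$.

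For part (1), the key inequality is $\|e^{\i\veck\cdot\vecx}\|_{\CC_{\rho'}}\leq e^{\beta\rho'|\veck|^{1/s}}$, which is exactly Lemma~\ref{lem:mode}. Writing $f=\sum_\veck f_\veck e^{\i\veck\cdot\vecx}$ and using the triangle inequality for $\|\cdot\|_{\CC_{\rho'}}$ gives
$$
\|f\|_{\CC_{\rho'}}\leq \sum_\veck |f_\veck|\,\|e^{\i\veck\cdot\vecx}\|_{\CC_{\rho'}}\leq \sum_\veck |f_\veck|\,e^{\beta\rho'|\veck|^{1/s}}\leq \sum_\veck |f_\veck|\,e^{\rho|\veck|^{1/s}}=\|f\|_{\FF_\rho},
$$
where the last step uses $\beta\rho'\leq\beta\rho'+\nu\leq\rho$ (here the slack $\nu$ is not even needed, but keeping it harmless). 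For the primed version one uses instead the second estimate in Lemma~\ref{lem:mode}, $\|e^{\i\veck\cdot\vecx}\|'_{\CC_{\rho'}}\leq(1+|\veck|)e^{\beta\rho'|\veck|^{1/s}}$, and the same telescoping, matching the extra factor $(1+|\veck|)$ against the definition of $\|\cdot\|_{\FF'_\rho}$.

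For part (2), the direction is reversed, so I would pass through the Fourier-decay estimate of Lemma~\ref{lem:decay}: if $f\in\CC_\rho$ then $|f_\veck|\leq\Delta\|f\|_{\CC_\rho}e^{-\rho|\veck|^{1/s}}$. Actually it is cleaner to use the sharp bound $(2\pi)^d|f_\veck|\sum_\alpha\frac{\rho^{s|\alpha|}}{\alpha!^s}\veck^\alpha\leq\|f\|_{\CC_\rho}$ from~\eqref{estimate fourier coefficient} together with the lower bound $\sum_\alpha\frac{\rho^{s|\alpha|}}{\alpha!^s}\veck^\alpha\geq h(s)^{-d}e^{\rho|\veck|^{1/s}}$ derived in that same proof, but either way one obtains $|f_\veck|e^{\rho'|\veck|^{1/s}}\leq (\text{const})\,\|f\|_{\CC_\rho}\,e^{-(\rho-\rho')|\veck|^{1/s}}\leq(\text{const})\,\|f\|_{\CC_\rho}\,e^{-\nu|\veck|^{1/s}}$, and summing over $\veck\in\Zz^d$ produces the convergent geometric-type series $C_\nu=\sum_\veck e^{-\nu|\veck|^{1/s}}$. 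I would want to check that the leading constant coming from Lemma~\ref{lem:decay} is at most $1$ so that only $C_\nu$ survives; since $\Delta<(e/2\pi)^d<1$ this is fine. For $\|f\|_{\FF'_{\rho'}}\leq C_\nu\|f\|_{\CC'_\rho}$ one treats the $(1+|\veck|)$ factor by absorbing $|\veck|$ into one derivative: apply the already-proven $\FF$-inclusion to $f$ and to each $\partial^{e_j}f$ (using $\|\partial^{e_j}f\|_{\CC_\rho}\leq\|Df\|_{\CC_\rho}\leq\|f\|_{\CC'_\rho}$), noting $\|f\|_{\FF'_{\rho'}}=\|f\|_{\FF_{\rho'}}+\sum_{|\alpha|=1}\|\partial^\alpha f\|_{\FF_{\rho'}}$.

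The only mildly delicate point — and the place I would be most careful — is bookkeeping the constants and the role of $\nu$: one must make sure the slack $\rho-\rho'\geq\nu$ (resp.\ $\rho-\beta\rho'\geq\nu$) is exactly what is consumed to turn $e^{-(\rho-\rho')|\veck|^{1/s}}$ into a summable tail and to keep the single universal constant $C_\nu$ (rather than a $\nu$-dependent product of constants) in front. There is no genuine analytic obstacle here; both halves are immediate consequences of Lemmas~\ref{lem:decay} and~\ref{lem:mode} once the series manipulations are written out, so the proof is essentially a two-line computation in each direction plus the routine reduction from scalar to vector-valued and from unprimed to primed norms.
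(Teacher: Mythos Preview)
Your approach is correct and matches the paper's own proof, which also reduces both directions to Lemmas~\ref{lem:mode} and~\ref{lem:decay} and then dismisses the primed inequalities as ``proved similarly''. There is, however, one small bookkeeping slip in your sketch of the primed half of~(2) that is worth flagging, since you yourself identified constant-tracking as the delicate point.

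The $\FF'$-norm involves a \emph{sum} over first-order derivatives,
\[
\|f\|_{\FF'_{\rho'}}=\|f\|_{\FF_{\rho'}}+\sum_{|\alpha|=1}\|\partial^\alpha f\|_{\FF_{\rho'}},
\]
whereas the $\CC'$-norm uses a \emph{maximum}: $\|f\|_{\CC'_\rho}=\|f\|_{\CC_\rho}+\max_{|\alpha|=1}\|\partial^\alpha f\|_{\CC_\rho}$. If you apply the already-cleaned-up unprimed inclusion (with constant $C_\nu$) to $f$ and to each $\partial^{e_j}f$ and then bound each $\|\partial^{e_j}f\|_{\CC_\rho}$ by $\|Df\|_{\CC_\rho}$ as you propose, you pick up a factor of $d$ and arrive at $dC_\nu\|f\|_{\CC'_\rho}$ rather than $C_\nu\|f\|_{\CC'_\rho}$. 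The fix is precisely \emph{not} to discard the constant $\Delta$ from Lemma~\ref{lem:decay} prematurely: carrying it through gives
\[
\|f\|_{\FF'_{\rho'}}\leq \Delta\, C_\nu\|f\|_{\CC_\rho}+d\,\Delta\, C_\nu\|Df\|_{\CC_\rho},
\]
and since $\Delta<(e/2\pi)^d$ one checks that $d\Delta<d(e/2\pi)^d<1$ for every $d\geq 1$, so the stated inequality with the bare constant $C_\nu$ follows. Everything else in your outline is fine and is exactly how the paper proceeds.
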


\begin{proof}
By Lemma~\ref{lem:mode}, we have
$$
\|f\|_{\CC_{\rho'}}\leq \sum_{\veck\in\Zz^d}|f_\veck|\|e^{i\veck\cdot\vecx}\|_{\CC_{\rho'}}\leq \sum_{\veck\in\Zz^d}|f_\veck|e^{(\beta\rho'+\nu)|\veck|^{1/s}}=\|f\|_{\FF_{\beta\rho'+\nu}}.
$$
This proves the first inequality of (1). Using Lemma~\ref{lem:decay} we get
$$
\|f\|_{\FF_{\rho'}}= \sum_{\veck\in\Zz^d}|f_\veck|e^{\rho'|\veck|^{1/s}}\leq C_\nu \|f\|_{\CC_{\rho'+\nu}},
$$
which shows the first inequality of (2). The remaining inequalities are proved similarly.
\end{proof}

\begin{remark}
It follows from the previous lemma that the set of $s$-Gevrey functions is given by $\bigcup_{\rho>0}\FF_{s,\rho}$. 
\end{remark}

\begin{proposition}\label{prop:norm estimates}
Given $\rho>0$ and $0<\nu<\rho/(1+\beta+\beta^2)$ let
$$
\rho':=\frac{\rho-\nu}{\beta}\quad\text{and}\quad \rho'':=\frac{\rho'-\nu}{\beta}-\nu.
$$
If $f\in\FF_{\rho}$ and $u\in\FF_{\rho'}$ such that
$$
\|u\|_{\FF_{\rho'}}\leq
\frac{\rho'^s}{d^{s-1}}-(\rho''+\nu)^s,
$$
then
\begin{enumerate}
\item $\|f\circ(\id+u)\|_{\FF_{\rho''}}\leq C_\nu\|f\|_{\FF_{\rho}}$,
\item $\|Df\circ(\id+u)\|_{\FF_{\rho''}}\leq C_{\nu}\|f\|_{\FF'_{\rho}}$,
\item $\|f\circ(\id+u)-f\|_{\FF_{\rho''}}\leq  C_{\nu}\|f\|_{\FF'_{\rho}}\|u\|_{\FF_{\rho'}}$,
\end{enumerate}
Moreover, if $f\in\FF'_{\rho}$ and,
$$
\|u\|_{\FF_{\rho'}}\leq \frac{\left(\rho'+d^{\frac{s-1}{s}}(\rho''+\nu)\right)^s}{2^sd^{s-1}}-(\rho''+\nu)^s
$$
then
$$\|Df\circ(\id + u)-Df\|_{\FF_{\rho''}}\leq \frac{2^s C_\nu }{\nu^s} \|f\|_{\FF'_{\rho}}\|u\|_{\FF_{\rho'}}.$$
\end{proposition}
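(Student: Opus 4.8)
The plan is to reduce everything to the already-established estimates in the $\CC_\rho$ spaces (Theorem~\ref{th:MarcoSauzin} and Lemma~\ref{lem:norm estimates}) by sandwiching $\FF$-norms between $\CC$-norms via Lemma~\ref{lem:equivalence_norms}, using the two ``loss parameters'' of size $\nu$ that are built into the definitions of $\rho'$ and $\rho''$. Concretely, the chain $\rho \rightsquigarrow \rho' \rightsquigarrow \rho''$ is rigged so that $\rho \geq \beta\rho' + \nu$ and $\rho'-\nu \geq \beta(\rho''+\nu)$, which means $\|\cdot\|_{\CC_{\rho'}} \leq \|\cdot\|_{\FF_\rho}$, $\|\cdot\|_{\CC'_{\rho'}} \leq \|\cdot\|_{\FF'_\rho}$, and $\|\cdot\|_{\FF_{\rho''}} \leq C_\nu \|\cdot\|_{\CC_{\rho''+\nu}}$ with $d^{\frac{s-1}{s}}(\rho''+\nu) < \rho'$. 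The hypothesis on $\|u\|_{\FF_{\rho'}}$ in parts (1)--(3) is exactly the condition $\|u\|_{\CC_{\rho'}} \leq \frac{\rho'^s}{d^{s-1}} - (\rho''+\nu)^s$ needed to apply Theorem~\ref{th:MarcoSauzin} and Lemma~\ref{lem:norm estimates} with $(\rho,\rho')$ there replaced by $(\rho',\rho''+\nu)$ here, since $\|u\|_{\CC_{\rho'}}\leq\|u\|_{\FF_{\rho'}}$.

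For part (1): write $f = \sum_k f_k e^{ik\cdot x}$; since $\|f\|_{\FF_\rho}$ controls $\|f\|_{\CC_{\rho'}}$ and $u \in \CC_{\rho'}$ with the required smallness, Theorem~\ref{th:MarcoSauzin} (applied at levels $\rho'$ and $\rho''+\nu$) gives $\|f\circ(\id+u)\|_{\CC_{\rho''+\nu}} \leq \|f\|_{\CC_{\rho'}} \leq \|f\|_{\FF_\rho}$, and then Lemma~\ref{lem:equivalence_norms}(2) upgrades the left side to $\|f\circ(\id+u)\|_{\FF_{\rho''}} \leq C_\nu\|f\|_{\FF_\rho}$. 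For part (2), apply part (1) componentwise to $Df$ (each entry $\partial^{e_j}f_i \in \FF_\rho$ with $\sum \|\partial^{e_j}f_i\|_{\FF_\rho}$ bounded by $\|f\|_{\FF'_\rho}$), or equivalently invoke Lemma~\ref{lem:norm estimates}(1) at the $\CC$-level and then Lemma~\ref{lem:equivalence_norms}(2) for the $\CC'_{\rho'} \hookleftarrow \FF'_\rho$ and $\FF_{\rho''} \hookleftarrow \CC_{\rho''+\nu}$ inclusions. For part (3), combine: $\|f\circ(\id+u)-f\|_{\CC_{\rho''+\nu}} \leq \|Df\|_{\CC_{\rho'}}\|u\|_{\CC_{\rho'}}$ by Lemma~\ref{lem:norm estimates}(2), then $\|Df\|_{\CC_{\rho'}} \leq \|f\|_{\CC'_{\rho'}} \leq \|f\|_{\FF'_\rho}$, $\|u\|_{\CC_{\rho'}} \leq \|u\|_{\FF_{\rho'}}$, and finally Lemma~\ref{lem:equivalence_norms}(2) to pass to $\FF_{\rho''}$, picking up the $C_\nu$.

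For the last inequality, the second smallness hypothesis on $u$ is precisely what is needed to invoke Lemma~\ref{lem:norm estimates}(4) with $(\rho,\rho')$ there set to $(\rho', \rho''+\nu)$ here, giving
$$
\|Df\circ(\id+u) - Df\|_{\CC_{\rho''+\nu}} \leq \frac{2^s}{(\rho' - d^{\frac{s-1}{s}}(\rho''+\nu))^s} \|Df\|_{\CC_{\rho'}}\|u\|_{\CC_{\rho'}}.
$$
By construction $\rho' - d^{\frac{s-1}{s}}(\rho''+\nu) = \rho' - \beta(\rho''+\nu)/s \cdot s/\beta \cdot \beta = \nu$ — more carefully, $\rho'' + \nu = \frac{\rho'-\nu}{\beta}$ so $\beta(\rho''+\nu) = \rho' - \nu$ and hence $d^{\frac{s-1}{s}}(\rho''+\nu) \leq \beta(\rho''+\nu) = \rho'-\nu$ when $s\geq1$; to get exactly $\nu^s$ in the denominator one uses $d^{\frac{s-1}{s}} \leq \beta$, so $\rho' - d^{\frac{s-1}{s}}(\rho''+\nu) \geq \rho' - \beta(\rho''+\nu) = \nu$, making the constant at most $2^s/\nu^s$. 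Then $\|Df\|_{\CC_{\rho'}} \leq \|f\|_{\FF'_\rho}$, $\|u\|_{\CC_{\rho'}} \leq \|u\|_{\FF_{\rho'}}$, and Lemma~\ref{lem:equivalence_norms}(2) yields $\|Df\circ(\id+u)-Df\|_{\FF_{\rho''}} \leq \frac{2^sC_\nu}{\nu^s}\|f\|_{\FF'_\rho}\|u\|_{\FF_{\rho'}}$.

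The only genuinely delicate point — the ``main obstacle'' — is bookkeeping the three scales and verifying that the $\FF$-smallness hypotheses really do imply the $\CC$-smallness hypotheses required by Theorem~\ref{th:MarcoSauzin} and Lemma~\ref{lem:norm estimates} after the substitution $(\rho,\rho') \mapsto (\rho', \rho''+\nu)$; in particular one must check $d^{\frac{s-1}{s}}(\rho''+\nu) < \rho'$, which follows from $\rho'' + \nu = (\rho'-\nu)/\beta < \rho'/d^{\frac{s-1}{s}}$. Everything else is a mechanical composition of the inclusion inequalities, applied coordinatewise where $\Rr^d$-valued functions appear. I would present parts (1)--(3) together in a few lines and then the final estimate, being explicit only about why each smallness constraint transfers and why the constant collapses to $2^sC_\nu/\nu^s$.
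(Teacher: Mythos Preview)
Your strategy is exactly the paper's: embed $\FF$ into $\CC$ via Lemma~\ref{lem:equivalence_norms}, apply Theorem~\ref{th:MarcoSauzin} and Lemma~\ref{lem:norm estimates} at the $\CC$-level with the pair $(\rho',\rho''+\nu)$, and return to $\FF$ using Lemma~\ref{lem:equivalence_norms}(2), picking up $C_\nu$.

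There is, however, a recurring indexing slip. When you substitute $(\rho,\rho')\mapsto(\rho',\rho''+\nu)$ in Theorem~\ref{th:MarcoSauzin} and Lemma~\ref{lem:norm estimates}, the norm of $u$ that appears is $\|u\|_{\CC_{\rho''+\nu}}$, not $\|u\|_{\CC_{\rho'}}$. This matters: the inequality $\|u\|_{\CC_{\rho'}}\leq\|u\|_{\FF_{\rho'}}$ that you invoke several times is false in general, since Lemma~\ref{lem:equivalence_norms}(1) would require $\rho'\geq\beta\rho'+\nu$, impossible once $\beta\geq1$. Indeed, from $u\in\FF_{\rho'}$ alone you cannot even conclude $u\in\CC_{\rho'}$. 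The correct inclusion is $\|u\|_{\CC_{\rho''+\nu}}\leq\|u\|_{\FF_{\rho'}}$, which holds precisely because $\rho''+\nu=(\rho'-\nu)/\beta$ gives $\beta(\rho''+\nu)+\nu=\rho'$, so Lemma~\ref{lem:equivalence_norms}(1) applies with equality. This is exactly what the paper does. Once you replace every occurrence of $\|u\|_{\CC_{\rho'}}$ by $\|u\|_{\CC_{\rho''+\nu}}$, your argument matches the paper's proof line for line; your verification that $\rho'-d^{\frac{s-1}{s}}(\rho''+\nu)\geq\rho'-\beta(\rho''+\nu)=\nu$ for the final constant is correct.
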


\begin{proof}\hfill
\begin{enumerate}
\item
By (2) of Lemma~\ref{lem:equivalence_norms}, 
$$
\|f\circ(\id+u)\|_{\FF_{\rho''}}\leq C_\nu\|f\circ(\id+u)\|_{\CC_{\rho''+\nu}}.
$$
Since, by (1) of Lemma~\ref{lem:equivalence_norms},
$$
\|u\|_{\CC_{\rho''+\nu}}\leq \|u\|_{\FF_{\rho'}}\leq \frac{\rho'^s}{d^{s-1}}-(\rho''+\nu)^s,
$$ 
we get by Theorem~\ref{th:MarcoSauzin} and (1) of Lemma~\ref{lem:equivalence_norms} that,
$$
\|f\circ(\id+u)\|_{\FF_{\rho''}}\leq C_\nu \|f\|_{\CC_{\rho'}}\leq C_\nu\|f\|_{\FF_{\rho}}.
$$
\item Similarly, by (2) of Lemma~\ref{lem:equivalence_norms}, 
$$
\|Df\circ(\id+u)\|_{\FF_{\rho''}}\leq C_\nu\|Df\circ(\id+u)\|_{\CC_{\rho''+\nu}}.
$$
Thus, by \eqref{estimate Df(id+u)} of Lemma~\ref{lem:norm estimates},
$$
\|Df\circ(\id+u)\|_{\FF_{\rho''}}\leq C_\nu \|Df\|_{\CC_{\rho'}}\leq C_\nu \|f\|_{\FF'_{\rho}}.
$$
\item Arguing as before we conclude using Lemma~\ref{lem:equivalence_norms} and \eqref{estimate f(id+u)-f} of Lemma~\ref{lem:norm estimates} that,
\begin{equation*}
\begin{split}
\|f\circ(\id+u)-f\|_{\FF_{\rho''}}&\leq  C_{\nu}\|f\circ(\id+u)-f\|_{\CC_{\rho''+\nu}}\\
&\leq C_\nu \|Df\|_{\CC_{\rho'}}\|u\|_{\CC_{\rho''+\nu}}\\
&\leq C_\nu \|f\|_{\FF'_{\rho}}\|u\|_{\FF_{\rho'}}.
\end{split}
\end{equation*}
\end{enumerate}
To prove the last estimate we can apply \eqref{estimate Df(id+u)-Df} of Lemma~\ref{lem:norm estimates} to get
\begin{equation*}
\begin{split}
\|Df\circ(\id + u)-Df\|_{\FF_{\rho''}}&\leq C_\nu \|Df\circ(\id + u)-Df\|_{\CC_{\rho''+\nu}}\\
&\leq C_\nu  \frac{2^s}{(\rho'-d^{\frac{s-1}{s}}(\rho''+\nu))^s}
\|Df\|_{\CC_{\rho'}}\|u\|_{\CC_{\rho''+\nu}}\\
&\leq \frac{2^s C_\nu }{\nu^s}\|f\|_{\FF'_{\rho}}\|u\|_{\FF_{\rho'}}
\end{split}
\end{equation*}

\end{proof}


Since $\FF'_{\rho}\subset\FF'_{\rho-\log\phi}$ whenever $\phi\geq1$, consider the inclusion operator $\II_{\phi}\colon \FF'_{\rho}\to \FF'_{\rho-\log\phi}$. 
Notice that $\II_\phi\circ\Ee=\Ee\circ\II_\phi=\Ee$.
When restricted to non-constant modes, its norm can be estimated as follows.

\begin{lemma}\label{lemma cutoff}
If $\phi\geq1$, then $\|\II_\phi (\Ii-\Ee)\| \leq \phi^{-1}$.
\end{lemma}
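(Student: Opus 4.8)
The plan is to work directly from the definition of the $\FF'_{s,\rho}$-norm, so that the statement reduces to a one-line estimate on the Fourier side. Given $f\in\FF'_\rho$, the function $(\Ii-\Ee)f$ has vanishing constant Fourier coefficient while all other coefficients are unchanged. Hence, unwinding the definition of the norm of $\FF'_{\rho-\log\phi}$,
\begin{equation*}
\|\II_\phi(\Ii-\Ee)f\|_{\FF'_{\rho-\log\phi}}
=\sum_{\veck\in\Zz^d\setminus\{0\}}(1+|\veck|)\,|f_\veck|\,e^{(\rho-\log\phi)|\veck|^{1/s}}
=\sum_{\veck\in\Zz^d\setminus\{0\}}(1+|\veck|)\,|f_\veck|\,e^{\rho|\veck|^{1/s}}\,\phi^{-|\veck|^{1/s}}.
\end{equation*}

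The key observation is then that for every nonzero $\veck\in\Zz^d$ one has $|\veck|\geq1$, and since $s\geq1$ this forces $|\veck|^{1/s}\geq1$; as $\phi\geq1$, it follows that $\phi^{-|\veck|^{1/s}}\leq\phi^{-1}$. Substituting this bound and then enlarging the index set back to all of $\Zz^d$ gives
\begin{equation*}
\|\II_\phi(\Ii-\Ee)f\|_{\FF'_{\rho-\log\phi}}
\leq \phi^{-1}\sum_{\veck\in\Zz^d}(1+|\veck|)\,|f_\veck|\,e^{\rho|\veck|^{1/s}}
=\phi^{-1}\|f\|_{\FF'_\rho}.
\end{equation*}
Taking the supremum over $f\in\FF'_\rho$ with $\|f\|_{\FF'_\rho}\leq1$ yields $\|\II_\phi(\Ii-\Ee)\|\leq\phi^{-1}$, which is the claim. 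The same computation applies verbatim to $\Rr^d$-valued functions by summing over the $d$ components, and to the $\FF_\rho$-norm by dropping the weights $1+|\veck|$.

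There is essentially no obstacle in this argument; the only point requiring a moment of care is the inequality $\phi^{-|\veck|^{1/s}}\leq\phi^{-1}$, which relies on \emph{both} hypotheses being used in tandem: $\phi\geq1$ makes $t\mapsto\phi^{-t}$ nonincreasing, and the fact that a nonzero integer vector has $\ell_1$-norm at least $1$ (together with $s\geq1$) guarantees the exponent $|\veck|^{1/s}$ is bounded below by $1$. This is precisely where the removal of the constant mode by $\Ii-\Ee$ is needed: without it, the term $\veck=0$ would contribute $\phi^{0}=1$ and the bound $\phi^{-1}$ would fail for $\phi>1$.
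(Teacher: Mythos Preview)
Your proof is correct and follows exactly the same approach as the paper: the paper's proof is the single displayed inequality
\[
\|(\Ii-\Ee)f\|_{\FF'_{\rho-\log\phi}} = \sum_{\veck\neq 0}(1+|\veck|)|f_\veck|e^{(\rho-\log\phi)|\veck|^{1/s}} \leq \phi^{-1}\|f\|_{\FF'_\rho},
\]
and you have simply spelled out the justification for the last step (namely that $|\veck|^{1/s}\geq 1$ for $\veck\neq 0$ and $\phi\geq 1$).
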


\begin{proof}
This follows simply by noticing that
$$
\|(\Ii-\Ee)f\|_{\FF'_{\rho-\log\phi}} 
= 
\sum_{\veck\not=0}(1+|k|) |f_k|e^{(\rho-\log\phi)|k|^{1/s}}
\leq 
\phi^{-1}
\|f\|_{\FF'_{\rho}}
$$
\end{proof}

\label{section:Renormalization}

\section{Coordinate transformations and time reparametrization}
\label{section:Coordinates}

A coordinate transformation $\phi$ on the $d$-torus $\Tt^d$ is a diffeomorphism isotopic to a matrix in $SL(d,\Zz)$. That is, $\psi=\phi\circ A$ where $A\in SL(d,\Zz)$ and $\phi\colon\Tt^d\to\Tt^d$ is an isotopic to the identity diffeomorphism, meaning that $\phi-\id$ is $2\pi\Zz^d$-periodic.

A vector field $X$ on $\Tt^d$ written on new coordinates $\psi$ is denoted by
$$
\psi^*X=(D\psi)^{-1}X\circ \psi.
$$
Notice that the set of vector fields on $\Tt^d$ can be identified with the set of functions from $\Tt^d$ to $\Rr^d$, i.e. $2\pi\Zz^d$-periodic maps of $\Rr^d$. 

Since $s\geq1$ is fixed throughout the paper and only the Banach spaces $\FF_{\rho}$ and $\FF'_{\rho}$ will be used, we shall simplify the notation by denoting their norms by $\|\cdot\|_\rho$ and $\|\cdot\|'_\rho$, respectively.


\subsection{Elimination of far from resonance modes}

Fix $\vecw\in\Rr^d$.
Given $\sigma>0$ we call {\em far from resonance modes} to the Fourier modes with indices in 
\begin{equation}\label{def I-}
I_{\sigma,\vecw}^- =\left\{ \veck\in \Zz^d \colon 
|\vecw\cdot \veck| > \sigma |\veck| 
\right\}.
\end{equation}
The {\em resonant modes} are the ones in $I_{\sigma,\vecw}^+=\Zz^d\setminus I_{\sigma,\vecw}^-$.
We also define the projections $\Ii_{\sigma,\vecw}^+$ and $\Ii_{\sigma,\vecw}^-$ over the spaces of functions by restricting the modes to $I_{\sigma,\vecw}^+$ and $I_{\sigma,\vecw}^-$, respectively.
Clearly, $\Ii=\Ii_{\sigma,\vecw}^++\Ii_{\sigma,\vecw}^-$ where $\Ii$ is the identity operator. Moreover, $\|\Ii^\pm_{\sigma,\vecw}\|_{\rho}\leq 1$.
To simplify the notation we occasionally omit the dependence of $I^\pm_{\sigma,\vecw}$ and $\Ii^\pm_{\sigma,\vecw}$ from $\vecw$.

Given $\rho>0$ and $\varepsilon>0$, we denote by $\VV_\varepsilon$ the set
\begin{equation}\label{def V ball}
\VV_\varepsilon=\{\vecw+f\in \FF'_{\rho} \colon  \|f\|'_\rho<\varepsilon\}.
\end{equation}

The following theorem is an adaptation of a result in \cite{jld,jld5} to the Gevrey class. For the convenience of the reader a proof can be found in the appendix.

\begin{theorem}
\label{thm unif}
Given $0<\sigma<|\vecw|$, $\rho>0$ and $0<\nu<\rho/(1+\beta+\beta^2)$,  let
\begin{equation}\label{eq:espilon}
\varepsilon=\varepsilon(\sigma,\nu,|\vecw|,s,d):=\frac{\sigma}{8(C_\nu-1)}\min\left\{\frac{\nu^s}{(2\beta)^s},\frac{\sigma}{8|\vecw| C_\nu}\left(\frac{2^s}{\nu^s}+7\right)^{-1}\right\},
\end{equation}
and
$$
\rho':=\frac{\rho-\nu}{\beta}\quad\text{and}\quad \rho'':=\frac{\rho'-\nu}{\beta}-\nu.
$$
There exist a smooth homotopy of Fr\'echet differentiable maps $\fU_t\colon  \VV_{\varepsilon} \to \Ii_\sigma^-\FF'_{\rho'}$ and $\UU_t\colon \VV_{\varepsilon} \to  (1-t)\Ii^-_\sigma\FF'_{\rho}\oplus t\Ii^+_\sigma\FF_{\rho''}$ such that 
$$
\UU_t(X)=(\id+\fU_t(X))^*X
$$ 
and
\begin{equation}\label{equation homotopy method}
\Ii_\sigma^-\UU_t(X)=(1-t)\,\Ii_\sigma^-X,
\qquad
t\in[0,1].
\end{equation}
Moreover,
\begin{equation}\label{estimate U-id}
\|\fU_t(X)\|'_{\rho'}
\leq \frac{8t(C_\nu-1)}{\sigma}\|\Ii_\sigma^-X\|_{\rho},
\end{equation}
and 
\begin{equation}\label{estimate U around X0}
\begin{split}
\|\UU_t(X)-\vecw\|_{\rho''}
\leq &
\|\Ii^+_\sigma (X-\vecw)\|_{\rho''}+(1-t)\|\Ii^-_\sigma X\|_{\rho''}\\
&+\frac{2^9t|\vecw|(C_\nu-1)(2C_\nu-1)}{\sigma^2}{\|X-\vecw\|'_\rho}^2.
\end{split}
\end{equation}
\end{theorem}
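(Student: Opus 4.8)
\textbf{Proof strategy for Theorem~\ref{thm unif}.}
The plan is to construct the conjugacy $\id+\fU_t(X)$ by the homotopy (deformation) method: we want $\UU_t(X)=(\id+\fU_t(X))^*X$ to satisfy $\Ii_\sigma^-\UU_t(X)=(1-t)\Ii_\sigma^-X$, so differentiating in $t$ yields a cohomological equation for $\dot\fU_t$. First I would set up the flow equation: writing $X_t:=\UU_t(X)$ and $u_t:=\fU_t(X)$, differentiation of $X_t=(\id+u_t)^*X$ in $t$ gives $\dot X_t = (\id+u_t)^*\big( (DX)\, v_t - (Dv_t)\, X + \text{lower order}\big)$ for a suitable vector field $v_t$ driving the isotopy ($\dot u_t = v_t\circ(\id+u_t)$ in the standard Lie-derivative bookkeeping). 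Imposing $\Ii_\sigma^-\dot X_t = -\Ii_\sigma^- X$ forces us to solve, at each $t$, an equation of the form $\Ii_\sigma^-\big( \vecw\cdot\partial v_t + \text{(perturbative terms)}\big) = -\Ii_\sigma^- X_t$ (the constant part $\vecw$ of $X_t$ provides the leading transport operator). On the far-from-resonance modes $\veck\in I_\sigma^-$ the operator $f\mapsto \vecw\cdot\partial f$ acts by multiplication by $\i\,\vecw\cdot\veck$ with $|\vecw\cdot\veck|>\sigma|\veck|$, hence is boundedly invertible there with norm controlled by $1/\sigma$; this is exactly where the divisor condition enters and where the factor $\sigma^{-1}$ in \eqref{estimate U-id} comes from. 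One then runs a fixed-point / iteration argument (or a Picard-type ODE existence argument in the Banach scale) on the space $\Ii_\sigma^-\FF'_{\rho'}$, using that $\VV_\varepsilon$ with the smallness \eqref{eq:espilon} keeps all the perturbative corrections (which involve compositions and derivatives, hence the loss $\rho\to\rho'\to\rho''$ quantified through Proposition~\ref{prop:norm estimates} and Lemma~\ref{lem:equivalence_norms}) subordinate to the leading term.

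The quantitative estimates are then obtained by tracking constants through the iteration. For \eqref{estimate U-id}: solving the cohomological equation costs a factor $1/\sigma$; the passage between the $\CC$-scale and $\FF$-scale costs the factor $C_\nu-1$ appearing in Proposition~\ref{prop:norm estimates}(3) when one subtracts off $f$ from $f\circ(\id+u)$; integrating the homotopy from $0$ to $t$ gives the linear factor $t$; assembling these yields the constant $8t(C_\nu-1)/\sigma$. For \eqref{estimate U around X0}: the right-hand side of the homotopy equation \eqref{equation homotopy method} already gives $\Ii_\sigma^-\UU_t(X)=(1-t)\Ii_\sigma^-X$, so $\|\Ii_\sigma^-(\UU_t(X)-\vecw)\|_{\rho''}=(1-t)\|\Ii_\sigma^- X\|_{\rho''}$ (note $\Ii_\sigma^-\vecw=0$ since $\vecw$ is a constant, i.e.\ a $\veck=0$ mode), while $\Ii_\sigma^+(\UU_t(X)-\vecw)$ splits as $\Ii_\sigma^+(X-\vecw)$ plus the correction $\UU_t(X)-X$ coming from the conjugation. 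The correction $\UU_t(X)-X = (\id+u_t)^*X - X$ is quadratically small in $\|X-\vecw\|'_\rho$: one factor of smallness comes from $\|u_t\|'_{\rho'}\lesssim \sigma^{-1}\|\Ii_\sigma^- X\|_\rho \lesssim \sigma^{-1}\|X-\vecw\|'_\rho$ via \eqref{estimate U-id}, and a second factor comes from estimating $(\id+u_t)^*X-X$ by $\|Du_t\|\,\|X\| + \|DX\circ(\id+u_t)-DX\|$, each of which carries a further $\|u_t\|$ or is itself already $O(\|X-\vecw\|)$; applying Proposition~\ref{prop:norm estimates} with the stated domain restriction and Lemma~\ref{lem:equivalence_norms} to re-expand in the $\FF$-norm produces the explicit constant $2^9 t|\vecw|(C_\nu-1)(2C_\nu-1)/\sigma^2$ and the factor $2^s/\nu^s+7$ implicit in the choice of $\varepsilon$.

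The smoothness in $t$ (a \emph{smooth homotopy of Fr\'echet-differentiable maps}) follows because the cohomological equation depends smoothly on $X_t$, which depends smoothly on $t$ by construction of the ODE; Fr\'echet differentiability in $X$ follows from the implicit function theorem applied to the map $(X,u)\mapsto \Ii_\sigma^-\big((\id+u)^*X - (1-t)\Ii_\sigma^-X\big)$ at $u=0$, $X=\vecw$, whose partial derivative in $u$ is the invertible transport operator on $\Ii_\sigma^-\FF'_{\rho'}$. The main obstacle I anticipate is bookkeeping the analyticity-radius losses: one must choose the intermediate radii and the number of iteration steps so that the geometric series of losses $\rho\to\rho'\to\rho''$ (controlled by $\nu$) does not consume more than is budgeted, while simultaneously keeping the composition hypotheses of Theorem~\ref{th:MarcoSauzin} and Proposition~\ref{prop:norm estimates} satisfied at every step — i.e.\ verifying that the iterates $u_t$ stay within the ball where composition estimates are valid. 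This is precisely the role of the somewhat baroque choice of $\varepsilon$ in \eqref{eq:espilon}: the first term $\nu^s/(2\beta)^s$ guarantees the composition hypotheses, and the second keeps the quadratic correction small enough to close the a priori bound. Since the theorem is stated as an adaptation of \cite{jld,jld5} with the proof deferred to the appendix, I would follow that template, replacing the analytic sup-norm estimates there by the Gevrey estimates of Section~\ref{section:gevrey}, and flag that the only genuinely new ingredient is the interplay between the two Gevrey scales $\CC_{s,\rho}$ and $\FF_{s,\rho}$ via Lemma~\ref{lem:equivalence_norms}.
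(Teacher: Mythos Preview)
Your proposal is correct and follows essentially the same approach as the paper's proof in the appendix: the homotopy method, inverting the transport operator $D_w$ on far-from-resonance modes with gain $\sim\sigma^{-1}$, controlling the perturbative terms via the Gevrey composition estimates, and reading off the quadratic remainder for \eqref{estimate U around X0}. The only presentational difference is that the paper works directly with the nonlinear map $\GG(u):=\Ii_\sigma^-(\id+u)^*X$ and the ODE $D\GG(u_t)\dot u_t=-\GG(0)$, inverting $D\GG(u)$ by a Neumann series around $D\GG(0)=\Ii_\sigma^-(\LL_f-D_w)$, rather than through the Lie-derivative/cohomological-equation language you use; the two formulations are equivalent, and your attribution of the constant $C_\nu-1$ to the $\CC$--$\FF$ passage is slightly off (in the paper it appears already in the bound for $\|D_w^{-1}\|$), but this does not affect the argument.
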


\begin{remark}\label{remark:estimate U around X0}
It follows from the definition of $\varepsilon$ and estimate \eqref{estimate U around X0} that,
$$
\|\UU_t(X)-\vecw\|_{\rho''}\leq (8-t)\|X-\vecw\|'_\rho.
$$
\end{remark}

\subsection{Rescaling}

A fundamental step in the renormalization scheme is a linear transformation of the
domain of definition of our vector fields.

Suppose that $T\in \SL(d,\Zz)$ and $\eta\in\Rr\setminus\{0\}$. 
Consider $X\in\FF_{\rho}$.
We are interested in the following coordinate and time linear changes:
\begin{equation}\label{coord x and time change}
\vecx\mapsto T^{-1}\vecx,
\qquad
t\mapsto \eta t.
\end{equation}
Notice that $\eta<0$ means inverting the direction of time. These changes determine a new vector field as the image of the map
$$
X\mapsto\TT(X):=\eta\, (T^{-1})^* X .
$$
It is simple to check that $\Ee\circ\TT=\TT\circ\Ee$.

Let $|T|$ denote the induced norm of the matrix $T$, i.e. $$|T|=\max_{1\leq j\leq d}\sum_{i=1}^d|T_{i,j}|$$ where $T_{i,j}$ is the $i,j$ entry of $T$.
Clearly, $|T|\in\Nn$.

Given $\sigma>0$ and $w\in\Rr\setminus\{0\}$, define
$$
A:=\sup_{\veck\in I_{\sigma,\vecw}^+\setminus\{0\}}\frac{|(T^{\top})^{-1}\veck|}{|\veck|}.
$$

\begin{lemma}\label{proposition TT}
Let $\rho>0$, $0<\delta<\rho/A^{1/s}$ and
\begin{equation}\label{hyp on rho n' for vf}
\rho':=\frac{\rho}{A^{1/s}}-\delta.
\end{equation}
The linear operator $\TT(\Ii^+_{\sigma,\vecw}-\Ee)$ maps $\FF_{\rho}$ into $(\Ii-\Ee)\FF'_{\rho'}$ and satisfies
\begin{equation}\label{bdd tilde LL n}
\|\TT(\Ii^+_{\sigma,\vecw}-\Ee)\|\leq
|\eta|\, |T|\,
\left(1+\frac{s^s}{\delta^s}\right).
\end{equation}
\end{lemma}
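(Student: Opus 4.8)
The plan is to prove Lemma~\ref{proposition TT} by a direct Fourier-side computation, estimating the action of $\TT(\Ii^+_{\sigma,\vecw}-\Ee)$ mode by mode and summing. First I would write, for $X\in\FF_\rho$ with Fourier expansion $X(\vecx)=\sum_{\veck}X_\veck e^{\i\veck\cdot\vecx}$, the explicit effect of the two operators. Applying $\Ii^+_{\sigma,\vecw}-\Ee$ keeps only the modes with $\veck\in I^+_{\sigma,\vecw}\setminus\{0\}$. The coordinate change $\vecx\mapsto T^{-1}\vecx$ sends $e^{\i\veck\cdot\vecx}$ to $e^{\i\veck\cdot T^{-1}\vecx}=e^{\i(T^{-\top}\veck)\cdot\vecx}$, and the Jacobian factor $(DT^{-1})^{-1}=T$ together with the time factor $\eta$ turns the coefficient $X_\veck$ into $\eta\,T\,X_\veck$ attached to the new frequency $\vecl:=T^{-\top}\veck$. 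So $\TT(\Ii^+_{\sigma,\vecw}-\Ee)X=\sum_{\veck\in I^+_{\sigma,\vecw}\setminus\{0\}}\eta\,(T X_\veck)\,e^{\i(T^{-\top}\veck)\cdot\vecx}$, and since $T^{-\top}\veck\neq0$ for $\veck\neq0$, this indeed lands in $(\Ii-\Ee)\FF'_{\rho'}$ once we check finiteness of the norm.

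Next I would estimate the $\FF'_{\rho'}$-norm. For each term, $|\eta\,T X_\veck|\le|\eta|\,|T|\,|X_\veck|$, and the new-frequency weight contributes $(1+|T^{-\top}\veck|)\,e^{\rho'|T^{-\top}\veck|^{1/s}}$. The point of the resonance restriction is precisely that, for $\veck\in I^+_{\sigma,\vecw}\setminus\{0\}$, we have $|T^{-\top}\veck|\le A|\veck|$ by the very definition of $A$. Hence $|T^{-\top}\veck|^{1/s}\le A^{1/s}|\veck|^{1/s}$, so $\rho'|T^{-\top}\veck|^{1/s}\le\rho'A^{1/s}|\veck|^{1/s}=(\rho-\delta A^{1/s})|\veck|^{1/s}$ by the choice~\eqref{hyp on rho n' for vf}. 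It remains to absorb the polynomial factor $1+|T^{-\top}\veck|\le 1+A|\veck|$: I would bound $(1+A|\veck|)\,e^{-\delta A^{1/s}|\veck|^{1/s}}$ against the function $x\mapsto(1+A x)e^{-\delta(Ax)^{1/s}}$ and use a standard one-variable maximization. Differentiating (or splitting the $1$ and the $A|\veck|$ parts and applying $\sup_{y\ge0}y^s e^{-\delta y}=(s/(\e\delta))^s\le (s/\delta)^s$ to $y=(A|\veck|)^{1/s}$) yields a bound of the shape $1+s^s/\delta^s$, uniformly in $\veck$. Pulling this constant out of the sum leaves $\sum_{\veck}|X_\veck|e^{\rho|\veck|^{1/s}}=\|X\|_\rho$, giving the claimed estimate~\eqref{bdd tilde LL n}.

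The main obstacle, and the only genuinely non-routine point, is getting the polynomial-versus-exponential tradeoff to produce exactly the constant $1+s^s/\delta^s$ rather than something messier; this is where the slight loss $\delta$ in the radius is spent, and one must be careful that the bound $1+As^s/(\e\delta A^{1/s})^s\cdot(\text{something})$ simplifies correctly — in particular that the factor $A$ inside $1+A|\veck|$ does not leak into the final constant, which works out because it is exactly compensated by the $A^{1/s}$ in the exponent after the substitution $y=(A|\veck|)^{1/s}$. Everything else — the Fourier bookkeeping, the triangle inequality, the use of $|\eta|\,|T|$ — is routine, and the fact that $T\in\SL(d,\Zz)$ guarantees $T^{-\top}$ maps $\Zz^d$ bijectively to $\Zz^d$ so no frequencies are lost or doubled. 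I would also remark that the hypothesis $0<\delta<\rho/A^{1/s}$ is exactly what makes $\rho'>0$, so the target space is well defined.
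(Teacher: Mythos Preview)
Your proposal is correct and follows essentially the same route as the paper: a mode-by-mode Fourier estimate, the bound $|T^{-\top}\veck|\le A|\veck|$ on resonant modes, and the one-variable inequality $\xi\,e^{-\delta\xi^{1/s}}\le(s/\delta)^s$ to trade the polynomial weight against the $\delta$-loss in radius. The only cosmetic difference is the order of operations: the paper applies the exponential--polynomial tradeoff directly with $\xi=|T^{-\top}\veck|$ and only afterwards invokes $|T^{-\top}\veck|^{1/s}\le A^{1/s}|\veck|^{1/s}$ in the remaining exponential, whereas you first pass to $A|\veck|$ and then set $y=(A|\veck|)^{1/s}$; both yield the same constant $1+s^s/\delta^s$.
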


\begin{proof}
Let $f\in(\Ii^+_{\sigma,\vecw}-\Ee)\FF_{\rho}$.
Then,
$$
\|f\circ T^{-1}\|_{\FF'_{\rho'}}
\leq
\sum_{\veck\in I^+_{\sigma,\vecw}\setminus\{0\}}
\left(1+|(T^{\top})^{-1}\,\veck|\right)
|f_\veck|
\e^{(\rho'-\delta+\delta)|(T^{\top})^{-1}\veck|^{1/s}}.
$$
Using the inequality $\xi \e^{-\delta\,\xi^{1/s}}\leq \left(\frac{s}{\delta}\right)^s$ with $\xi\geq0$, we get
\begin{align*}
\|f\circ T^{-1}\|_{\FF'_{\rho'}}
&\leq
\left(1+\frac{s^s}{\delta^s}\right)
\sum_{\veck\in I^+_{\sigma,\vecw}\setminus\{0\}} 
|f_\veck|
\e^{A^{1/s}(\rho'+\delta)|\veck|^{1/s}}\\
&\leq 
\left(1+\frac{s^s}{\delta^s}\right)
\|f\|_{\FF_{\rho}}.
\end{align*}
Finally, $\|\TT f\|_{\FF'_{\rho'}}\leq |\eta|\,|T| \,\|f\circ T^{-1}\|_{\FF'_{  \rho'}}$.
\end{proof}

Given $P\in \SL(d,\Zz)$, $\sigma>0$ and $w\in\Rr\setminus\{0\}$, define
$$
B:=\sup_{\veck\in I_{\sigma,\vecw}^-}\frac{|P^{\top}\veck|}{|\veck|}.
$$

\begin{lemma}\label{proposition TT v2}
Let $\rho>0$ and
\begin{equation*}
\rho':=\frac{\rho}{B^{1/s}}.
\end{equation*}
The linear operator $\tau\colon f\mapsto f\circ P$ maps $\Ii^-_{\sigma,w}\FF_{\rho}$ into $(\Ii-\Ee)\FF_{\rho'}$ and satisfies
$\|\tau\circ \Ii^-_{\sigma,w}\|\leq1$.
\end{lemma}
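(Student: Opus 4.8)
The plan is to mimic the proof of Lemma~\ref{proposition TT}, but now exploiting the fact that on $\Ii^-_{\sigma,w}\FF_\rho$ we compose with $P$ (not $P^{-\top}$) and we do \emph{not} need to absorb a factor $(1+|P^\top k|)$, so no Cauchy-type loss $\delta$ is required. First I would take $f\in\Ii^-_{\sigma,w}\FF_\rho$, so that $f=\sum_{k\in I^-_{\sigma,w}}f_k\,e^{\i k\cdot x}$ and in particular $f_0=0$, which gives $f\circ P\in(\Ii-\Ee)\FF_{\rho'}$ once we check the norm is finite (here one uses that $P\in\SL(d,\Zz)$ so $k\mapsto P^\top k$ is a bijection of $\Zz^d$ fixing $0$; composition with $P$ replaces the mode $k$ by $P^\top k$).

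Next I would estimate directly:
\begin{align*}
\|f\circ P\|_{\FF_{\rho'}}
&=\sum_{k\in I^-_{\sigma,w}}|f_k|\,\e^{\rho'|P^\top k|^{1/s}}
\leq\sum_{k\in I^-_{\sigma,w}}|f_k|\,\e^{\rho' B^{1/s}|k|^{1/s}}
=\sum_{k\in I^-_{\sigma,w}}|f_k|\,\e^{\rho|k|^{1/s}}
=\|f\|_{\FF_\rho},
\end{align*}
using the definition of $B$ (hence $|P^\top k|\leq B|k|$ for $k\in I^-_{\sigma,w}$) and $\rho'=\rho/B^{1/s}$. Since $\|\Ii^-_{\sigma,w}\|_\rho\leq1$, composing with the projection gives $\|\tau\circ\Ii^-_{\sigma,w}\|\leq1$ as claimed.

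There is essentially no hard analytic step here; the only thing to be careful about is the bookkeeping. One must note that the statement implicitly assumes $B\geq1$ (so that $\rho'\leq\rho$ and the map is between the stated spaces) — this holds because $P^\top$ is an integer matrix and $I^-_{\sigma,w}$ contains vectors realizing $|P^\top k|\geq|k|$, or more simply because $\det P^\top=1$ forces $|P^\top|\geq1$; if one wants to be safe one can just restrict to the regime $B\geq1$ that actually occurs in the renormalization (where $B=B_n$ from Lemma~\ref{mcf estimates 2}). I would also remark that, unlike Lemma~\ref{proposition TT}, no $\delta$-shrinkage of $\rho$ is needed because we land in $\FF_{\rho'}$ rather than $\FF'_{\rho'}$, so there is no extra polynomial weight $(1+|P^\top k|)$ to dominate. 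The main "obstacle", such as it is, is purely notational: making sure the change of variables $x\mapsto Px$ acts on Fourier modes as $k\mapsto P^\top k$ with the right transpose, and that the resulting map indeed has zero constant mode so that it lands in $(\Ii-\Ee)\FF_{\rho'}$.
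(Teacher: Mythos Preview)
Your proof is correct and follows essentially the same approach as the paper: the paper's proof is exactly the one-line estimate $\|f\circ P\|_{\FF_{\rho'}}=\sum_{k\in I^-_{\sigma,w}}|f_k|\e^{\rho'|P^\top k|^{1/s}}\leq\sum_{k\in I^-_{\sigma,w}}|f_k|\e^{B^{1/s}\rho'|k|^{1/s}}=\|f\|_{\FF_\rho}$. Your additional remarks on why the constant mode vanishes and why no $\delta$-loss is needed are helpful elaborations that the paper leaves implicit.
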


\begin{proof}
Let $f\in\Ii^-_{\sigma,\vecw}\FF_{\rho}$.
Then,
$$
\|f\circ P\|_{\FF_{\rho'}}
=
\sum_{\veck\in I^-_{\sigma,\vecw}}
|f_\veck|
\e^{\rho'|P^{\top}\veck|^{1/s}}
\leq
\sum_{\veck\in I^-_{\sigma,\vecw}} 
|f_\veck|
\e^{B^{1/s}\rho'|\veck|^{1/s}}
=
\|f\|_{\FF_{\rho}}.
$$
\end{proof}
%
\section{Renormalization}
\label{section:Renormalization}

As in the previous section, $s\geq1$ is fixed throughout and to simplify the notation we shall denote by  $\|\cdot\|_\rho$ and $\|\cdot\|'_\rho$ the norms of $\FF_{\rho}$ and $\FF'_{\rho}$, respectively.

\subsection{Renormalization operator}

Fix $\rho>0$.
Let $w\in\Rr^d\setminus\{0\}$, $\sigma>0$, $0<\nu<\rho/(1+\beta+\beta^2)$, $\eta\in\Rr\setminus\{0\}$ and $T\in SL(d,\Zz)$.
Recall also ~\eqref{def: beta C_nu}.
The renormalization operator 
$$
\RR\colon \FF'_{\rho} \to \bigcup_{r>0}\FF_r
$$ 
is defined for each $X\in\VV_\varepsilon$ by
$$
\RR(X)= \TT\circ \UU (X).
$$

\begin{proposition}\label{prop:one iteration}
Let $0<\delta<\rho''/A^{1/s}$, 
$$
\rho'=\frac{\rho''}{A^{1/s}}-\delta
\te{and}
\rho'' = \frac{\rho-\nu(1+\beta+\beta^2)}{\beta^2}.
$$
For any $X\in\VV_\varepsilon$ and $1\leq\phi<e^{\rho'}$ we have that $\RR(X)\in\FF'_{\rho'}$ and
\begin{equation*}
\begin{split}
\|(\Ii-\Ee)\RR(X)\|'_{\rho'-\log\phi} 
\leq &
\frac{|\eta|\, |T|}{\phi}\left( 1+\frac{s^s}{\delta^s} \right) 
\left[\vphantom{\frac{2^9|w|(C_\nu-1)(2C_\nu-1)}{\sigma^2}}
\|\Ii^+_{\sigma,w} (X-w)\|_{\rho''} 
\right. \\
& \left.
+\frac{2^9|w|(C_\nu-1)(2C_\nu-1)}{\sigma^2}{\|X-w\|'_\rho}^2
\right]
\end{split}
\end{equation*}

\end{proposition}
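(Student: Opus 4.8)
The strategy is to decompose $\RR = \TT\circ\UU$ and apply the estimates already established for each factor, then absorb the derivative-norm bookkeeping via the cutoff operator $\II_\phi$. First I would note that, since $\Ee\circ\TT=\TT\circ\Ee$ and $\TT$ kills nothing on the complement of the constant mode in a controlled way, it suffices to estimate $(\Ii-\Ee)\TT\,\UU(X)$. The key algebraic observation is that on non-constant modes the operator $\UU=\UU_1$ satisfies $\Ii^-_{\sigma,w}\UU_1(X)=0$ by the homotopy identity \eqref{equation homotopy method} with $t=1$; hence $(\Ii-\Ee)\UU_1(X) = \Ii^+_{\sigma,w}(\Ii-\Ee)\UU_1(X)$, so that
$$
(\Ii-\Ee)\RR(X) = \TT(\Ii^+_{\sigma,w}-\Ee)\bigl(\UU_1(X)-w\bigr),
$$
using also that $\TT(w)$ is a constant vector field and therefore lies in the kernel of $\Ii-\Ee$ after applying $\TT$. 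This reduces the problem to bounding $\|\TT(\Ii^+_{\sigma,w}-\Ee)\,g\|'_{\rho'-\log\phi}$ for $g=\UU_1(X)-w$.

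Next I would chain the three relevant lemmas. Lemma~\ref{proposition TT} gives $\|\TT(\Ii^+_{\sigma,w}-\Ee)\| \le |\eta|\,|T|\,(1+s^s/\delta^s)$ as an operator from $\FF_{\rho''}$ into $(\Ii-\Ee)\FF'_{\rho'}$, with the radius relation $\rho'=\rho''/A^{1/s}-\delta$ exactly as hypothesized. Then Lemma~\ref{lemma cutoff} supplies the factor $\phi^{-1}$ when passing from $\FF'_{\rho'}$ to $\FF'_{\rho'-\log\phi}$ on non-constant modes; here the constraint $1\le\phi<e^{\rho'}$ guarantees $\rho'-\log\phi>0$ so the target space is well-defined. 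Combining,
$$
\|(\Ii-\Ee)\RR(X)\|'_{\rho'-\log\phi} \le \frac{|\eta|\,|T|}{\phi}\Bigl(1+\frac{s^s}{\delta^s}\Bigr)\,\|\UU_1(X)-w\|_{\rho''}.
$$
Finally, I would invoke estimate \eqref{estimate U around X0} of Theorem~\ref{thm unif} with $t=1$: the term $(1-t)\|\Ii^-_\sigma X\|_{\rho''}$ vanishes, leaving precisely
$$
\|\UU_1(X)-w\|_{\rho''} \le \|\Ii^+_{\sigma,w}(X-w)\|_{\rho''} + \frac{2^9|w|(C_\nu-1)(2C_\nu-1)}{\sigma^2}\,\|X-w\|'^2_\rho,
$$
which when substituted yields exactly the claimed bound. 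One must also check the chain of radius reductions is consistent: $\rho''=(\rho-\nu(1+\beta+\beta^2))/\beta^2$ is chosen so that the intermediate radii $(\rho-\nu)/\beta$ and $((\rho-\nu)/\beta-\nu)/\beta$ of Theorem~\ref{thm unif} and Proposition~\ref{prop:norm estimates} match, and that $X\in\VV_\varepsilon$ with $\varepsilon$ as in \eqref{eq:espilon} puts $X$ in the domain where Theorem~\ref{thm unif} applies — this is where the precise definition of $\varepsilon$ is used.

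I expect the only real subtlety to be the bookkeeping of the radii and the verification that $\RR(X)\in\FF'_{\rho'}$ (as opposed to merely $\FF_{\rho'}$): the primed norm on the output is produced by Lemma~\ref{proposition TT}, whose conclusion lands in $(\Ii-\Ee)\FF'_{\rho'}$ precisely because the factor $1+s^s/\delta^s$ already accounts for the extra weight $(1+|k|)$; one should confirm that $\delta$ can indeed be chosen in $(0,\rho''/A^{1/s})$, i.e. that $\rho''/A^{1/s}>0$, which holds since $A$ is finite and $\rho''>0$ under the stated constraint on $\nu$. Everything else is a direct concatenation of the cited estimates with $t=1$, so no genuinely new argument is needed beyond assembling them in the right order and tracking constants.
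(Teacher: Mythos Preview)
Your proposal is correct and follows exactly the approach of the paper, which simply cites Theorem~\ref{thm unif}, Lemma~\ref{proposition TT}, and Lemma~\ref{lemma cutoff} without further elaboration. You have correctly filled in the details: the identity $(\Ii-\Ee)\RR(X)=\TT(\Ii^+_{\sigma,w}-\Ee)(\UU_1(X)-w)$ via $\Ii^-_\sigma\UU_1(X)=0$ and $\Ee\circ\TT=\TT\circ\Ee$, then the chain Lemma~\ref{proposition TT} $\to$ Lemma~\ref{lemma cutoff} $\to$ estimate~\eqref{estimate U around X0} at $t=1$, with the radius bookkeeping matching exactly.
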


\begin{proof}
Using Theorem~\ref{thm unif}, Lemma~\ref{proposition TT} and Lemma~\ref{lemma cutoff} we obtain the above statement.
\end{proof}


\subsection{Infinitely renormalizable vector fields}
\label{The Limit Set of the Renormalization}

For a rationally independent vector $\vecomega\in\Rr^d\setminus\{0\}$ consider its multidimensional continued fractions expansion, namely the sequences $\vecomega_n$, $T_n$ and $\eta_n$, $n\geq1$.
Moreover, consider some chosen sequences $\rho_n,\sigma_n,\nu_n>0$ satisfying
$$
\sigma_n<|\omega_n|
\te{and}
\nu_n<\rho_n/(1+\beta+\beta^2).
$$

We now define a sequence of renormalization operators $\RR_n$ in the following way. Each renormalization operator is the conposition of the operators $\TT_n:=\eta_n\,(T_n^{-1})^*$ and $\UU_n$ obtained by Theorem~\ref{thm unif} for $t=1$ and $\vecw=\vecomega_{n-1}$, i.e.
$$
\RR_n:=\TT_n\circ\UU_n\,,\quad n\geq1.
$$

The domain of the operator $\RR_n$ is the open ball $\VV_{\varepsilon_{n-1}}\subset\FF'_{\rho_{n-1}}$ centered at $\omega_{n-1}$ with radius
$$
\varepsilon_{n-1}=\varepsilon(\sigma_{n-1},\nu_{n-1},|\omega_{n-1}|,s,d)
$$
as given by \eqref{eq:espilon}.
Notice that $X$ and $\RR_n(X)$ are Gevrey-equivalent vector fields, i.e. their flows are conjugated by an $s$-Gevrey diffeomorphism.

\begin{defn}
We say that $X\in\FF'_{\rho}$ is \textit{infinitely renormalizable} if  $X$ belongs to the domain of the operator $\RR_n\circ\cdots\circ \RR_1$ for every $n\geq1$,
i.e.
$$
\|X_{n-1}-\omega_{n-1}\|_{\rho_{n-1}} <\varepsilon_{n-1}.
$$
\end{defn}

We will show later that infinitely renormalizable vector fields such that the renormalization converges to a constant have a flow which is linearizable by a Gevrey conjugacy.
In the remaining part of this section we want to find conditions for which a vector field is infinitely renormalizable. 

\subsection{Sufficient conditions}

Let $\rho_0:=\rho$. We fix the sequence $\nu_n:=\nu>0$ to be  constant along the iterations and so that
\begin{equation}\label{nu constant}
\nu < \frac{\rho_n}{1+\beta+\beta^2}
\end{equation}
for every $n\geq0$.
This can be achieved for the choice 
\begin{equation}\label{rhon}
\rho_n:=\frac{\rho_{n-1}-\nu(1+\beta+\beta^2)}{\beta^2A_n^{1/s}} -\delta-\log\phi_n
\end{equation}
for any sequence $\phi_n\geq1$ and $\delta>0$, as long as $\inf_n\rho_n>0$.
Iterating the equation above we get
$$
\rho_n = \frac{\rho-\BB_n}{\beta^{2n} A_1^{1/s}\dots A_n^{1/s}}
$$
where
\begin{equation}\label{eq defn BB_n}
\BB_n :=\sum_{i=1}^n \beta^{2i} A_1^{1/s}\dots A_i^{1/s} 
\left(
\delta+\frac{\nu(1+\beta+\beta^2)}{\beta^2A_i^{1/s}}+
\log\phi_i
\right)
\end{equation}
is an increasing sequence.
Define
$$
\phi_n := \max\left\{7(d+1)|\eta_n|\, |T_n|  \,\left( 1+\frac{s^s}{\delta^s} \right) \frac{\varepsilon_{n-1}}{\varepsilon_n\theta_n},1\right\}
$$
 where $0< \theta_n \leq1$ is any chosen sequence.

Notice that $\BB_n$ depends on the choice of the sequence $\sigma_n$ through the sequences $\varepsilon_n$ and $A_n$. Moreover, if for some sequence $\sigma_n$ we have $\lim\BB_n<\infty$, then necessarily $\beta^{2n}A_1^{1/s}\cdots A_n^{1/s}\to 0$.
Hence, if $\rho>\lim\BB_n$, we have
$$
\rho_n > 
\frac{\rho-\lim \BB_n}{\beta^{2n} A_1^{1/s}\dots A_n^{1/s}} \to+\infty
$$

Let $X_0:=X$ and $X_n:=\RR_n(X_{n-1})$ whenever $X_{n-1}$ is in the domain of $\RR_n$. 

\begin{theorem}\label{convergence Rn}
If $X\in\FF'_{\rho}$, $0<\theta_n\leq1$ and $0<\sigma_n<|\omega_n|$ satisfy
\begin{itemize}
\item
$\rot X=\vecomega$, 
\item
$\|X-\omega\|'_\rho < \varepsilon_0$,
\item
$\rho > \lim\BB_n$,
\end{itemize}
then $X$ is infinitely renormalizable and
\begin{equation}\label{bound on LLn less than epsilon}
\|X_n-\vecomega_n\|'_{\rho_n}
< \varepsilon_{n} \theta_{n},
\qquad
n\geq1.
\end{equation}
\end{theorem}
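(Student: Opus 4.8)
The plan is to prove the statement by induction on $n$, showing that at each step the renormalized vector field $X_n$ stays inside the domain $\VV_{\varepsilon_n}$ of the next operator $\RR_{n+1}$, with the quantitative bound \eqref{bound on LLn less than epsilon}. The base case $n=0$ is the hypothesis $\|X-\omega\|'_\rho<\varepsilon_0$. For the inductive step, assume $\|X_{n-1}-\omega_{n-1}\|'_{\rho_{n-1}}<\varepsilon_{n-1}\theta_{n-1}\le\varepsilon_{n-1}$, so $X_{n-1}\in\VV_{\varepsilon_{n-1}}$ and $X_n=\RR_n(X_{n-1})$ is well-defined. I would then apply Proposition~\ref{prop:one iteration} with $X=X_{n-1}$, $w=\omega_{n-1}$, $\rho$ replaced by $\rho_{n-1}$, and $\phi=\phi_n$, to estimate $\|(\Ii-\Ee)X_n\|'_{\rho_n}$.

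\medskip

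\textbf{Key steps.} First I would handle the constant mode. Since $\rot X=\omega$ is preserved under renormalization (the operators $\TT_n$ and $\UU_n$ are conjugacies composed with the linear action $\eta_n(T_n^{-1})^*$, and $\omega_n=\eta_n T_n\omega_{n-1}$ by construction), we have $\Ee X_n = \TT_n\Ee\UU_n(X_{n-1})$; using $\Ii_\sigma^-\UU_n(X_{n-1})=0$ at $t=1$ together with $\Ee\circ\TT_n=\TT_n\circ\Ee$ and the fact that $\TT_n\omega_{n-1}=\omega_n$, one checks $\Ee X_n=\omega_n$. Hence $\|X_n-\omega_n\|'_{\rho_n}=\|(\Ii-\Ee)X_n\|'_{\rho_n}$ and Proposition~\ref{prop:one iteration} gives
\begin{equation*}
\|X_n-\omega_n\|'_{\rho_n}\le \frac{|\eta_n||T_n|}{\phi_n}\left(1+\frac{s^s}{\delta^s}\right)\left[\|\Ii^+_{\sigma_{n-1},\omega_{n-1}}(X_{n-1}-\omega_{n-1})\|_{\rho_{n-1}''}+\frac{2^9|\omega_{n-1}|(C_\nu-1)(2C_\nu-1)}{\sigma_{n-1}^2}\|X_{n-1}-\omega_{n-1}\|_{\rho_{n-1}}'^2\right].
\end{equation*}
Next I bound the bracket by the inductive hypothesis: the first term is $\le\|X_{n-1}-\omega_{n-1}\|'_{\rho_{n-1}}<\varepsilon_{n-1}\theta_{n-1}\le\varepsilon_{n-1}$, and for the quadratic term I would use $\|X_{n-1}-\omega_{n-1}\|'^2_{\rho_{n-1}}<\varepsilon_{n-1}^2$ together with the explicit form of $\varepsilon_{n-1}$ from \eqref{eq:espilon}. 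The precise numerical coefficients in \eqref{eq:espilon}—in particular the factor $\tfrac{\sigma}{8|\vecw|C_\nu}\big(\tfrac{2^s}{\nu^s}+7\big)^{-1}$—are engineered so that the bracket is bounded by something like $7(d+1)\varepsilon_{n-1}$ (the $(d+1)$ absorbing the passage between the $\|\cdot\|$ and $\|\cdot\|'$ norms on vector-valued maps, and the $\min$ in \eqref{eq:espilon} guaranteeing the quadratic term is dominated by the linear one since $\varepsilon_{n-1}$ is small). Finally, the definition $\phi_n\ge 7(d+1)|\eta_n||T_n|(1+s^s/\delta^s)\varepsilon_{n-1}/(\varepsilon_n\theta_n)$ is exactly what is needed so that
\begin{equation*}
\frac{|\eta_n||T_n|}{\phi_n}\left(1+\frac{s^s}{\delta^s}\right)\cdot 7(d+1)\varepsilon_{n-1}\le \varepsilon_n\theta_n,
\end{equation*}
which is \eqref{bound on LLn less than epsilon}. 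One also verifies the hypothesis $1\le\phi_n<e^{\rho_n'}$ of Proposition~\ref{prop:one iteration} using $\rho>\lim\BB_n$: by the remark after \eqref{eq defn BB_n}, $\rho_n\to+\infty$ and $\log\phi_n$ enters \eqref{rhon} with the right sign, so $\rho_n>0$ for all $n$ and the admissibility conditions $0<\delta<\rho_{n-1}''/A_n^{1/s}$ and $\phi_n<e^{\rho_n'}$ hold; I would spell this bookkeeping out explicitly, since it is where the arithmetic condition $\rho>\lim\BB_n$ actually gets used.

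\medskip

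\textbf{Main obstacle.} The routine part is the norm juggling; the delicate point is verifying that the bracket in Proposition~\ref{prop:one iteration} is genuinely controlled by $7(d+1)\varepsilon_{n-1}$ rather than a larger constant. This requires carefully tracking how the scalar estimates of Theorem~\ref{thm unif} (stated for $w\in\Rr$, with factors like $\tfrac{2^9|w|(C_\nu-1)(2C_\nu-1)}{\sigma^2}$) combine across the $d$ components when $\omega_{n-1}\in\Rr^d$, and checking that the $\min$ defining $\varepsilon$ in \eqref{eq:espilon} kills the quadratic contribution; the factor $(d+1)$ and the specific constant $7$ must be reconciled with the precise form of \eqref{eq:espilon}, which is precisely why $\varepsilon$ was defined with those constants. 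I expect this to be the one place where the proof is more than formal bookkeeping, and I would devote most of the write-up to it; the rest follows by the induction together with the already-established estimates on $|\eta_n|$, $|T_n|$ from Lemma~\ref{mcf estimates} and the convergence $\BB_n\to\lim\BB_n$.
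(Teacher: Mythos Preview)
Your inductive scheme and use of Proposition~\ref{prop:one iteration} and the definition of $\phi_n$ match the paper's proof, but there is a genuine gap in your treatment of the constant mode.

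You claim that $\Ee X_n=\omega_n$, arguing from ``$\Ii_\sigma^-\UU_n(X_{n-1})=0$ together with $\Ee\circ\TT_n=\TT_n\circ\Ee$ and $\TT_n\omega_{n-1}=\omega_n$''. This does not follow. The operator $\UU_n$ is pullback by a diffeomorphism isotopic to the identity; such a pullback preserves the \emph{rotation vector} of the flow, but it does \emph{not} preserve the zeroth Fourier mode of the vector field. Knowing that $\UU_n(X_{n-1})$ has only resonant modes tells you nothing about its constant mode being exactly $\omega_{n-1}$; in general $\Ee\UU_n(X_{n-1})\neq\omega_{n-1}$, and hence $\Ee X_n\neq\omega_n$.

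The paper handles this differently: it uses that $\rot X_n=\omega_n$ (which \emph{is} a conjugacy invariant) together with \cite[Proposition~3.3]{jld10}, which bounds $|\Ee X_n-\omega_n|$ by $d\,\|(\Ii-\Ee)X_n\|'_{\rho_n}$. That is precisely the origin of the factor $(d+1)$ in
\[
\|X_n-\omega_n\|'_{\rho_n}\leq \|(\Ii-\Ee)X_n\|'_{\rho_n}+|\Ee X_n-\omega_n|\leq (d+1)\|(\Ii-\Ee)X_n\|'_{\rho_n}.
\]
Your alternative explanation, that $(d+1)$ ``absorbs the passage between the $\|\cdot\|$ and $\|\cdot\|'$ norms on vector-valued maps'', is not correct.

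On the other hand, what you flag as the ``main obstacle''---verifying that the bracket in Proposition~\ref{prop:one iteration} is controlled by $7\varepsilon_{n-1}$---is actually immediate from Remark~\ref{remark:estimate U around X0} at $t=1$: the estimate $\|\UU_1(X)-w\|_{\rho''}\leq 7\|X-w\|'_\rho$ already packages the constants in \eqref{eq:espilon}, so no further unpacking is needed there. Once you replace your incorrect $\Ee X_n=\omega_n$ step by the rotation-vector argument above, the rest of your outline becomes the paper's proof verbatim.
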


\begin{proof}
If at each step $X_n$ is in the domain of $\UU_{n+1}$, i.e.
\begin{equation}\label{cdn inf ren}
\|X_{n}-\vecomega_n\|'_{\rho_n} <\varepsilon_n\,,
\end{equation} 
then $X_n$ is renormalizable and $X_{n+1}=\RR_{n+1}(X_n)$. Being true for any $n\in\Nn$, then $X$ is infinitely renormalizable.
The inequality \eqref{cdn inf ren} can be estimated using~\cite[Proposition 3.3]{jld10} and Proposition~\ref{prop:one iteration}. First we get,
\begin{equation}
\begin{split}
\|X_{n}-\vecomega_n\|'_{\rho_n} 
&=
\|\RR_n(X_{n-1})-\vecomega_n\|'_{\rho_n}
\\
&\leq
\|(\Ii-\Ee)\RR_n(X_{n-1})\|'_{\rho_n} + |\Ee\RR_n(X_{n-1})-\vecomega_n| \\
&\leq
(d+1) \|(\Ii-\Ee)\RR_n(X_{n-1})\|'_{\rho_n}.
\end{split}
\end{equation}
Thus,
\begin{equation*}
\begin{split}
\|X_{n}-\vecomega_n\|'_{\rho_n}  
\leq &
(d+1)\frac{|\eta_n|\, |T_n|}{\phi_n}\left( 1+\frac{s^s}{\delta^s} \right) 
\left[\vphantom{\frac{2^9|\vecomega_{n-1}|(C_\nu-1)(2C_\nu-1)}{\sigma_{n-1}^2}}
\|\Ii^+_{n-1} (X_{n-1}-\vecomega_{n-1})\|_{\xi'} 
\right. \\
& \left.
+\frac{2^9|\vecomega_{n-1}|(C_\nu-1)(2C_\nu-1)}{\sigma_{n-1}^2}{\|X_{n-1}-\vecomega_{n-1}\|'_{\xi}}^2
\right],
\end{split}
\end{equation*}
where 
$$
\xi'=A_n^{1/s}\left(\rho_n+\delta+\log\phi_n\right)\te{and}\xi=\beta^2\xi'+\nu\left(1+\beta+\beta^2\right).
$$

We now proceed by induction. Assuming that \eqref{bound on LLn less than epsilon} holds for $n-1$, we substitute the value of $\phi_n$ and use Remark~\ref{remark:estimate U around X0} to get,
$$
\|X_{n}-\vecomega_n\|'_{\rho_n}\leq 7(d+1)\frac{|\eta_n|\, |T_n|}{\phi_n}\left( 1+\frac{s^s}{\delta^s} \right)\|X_{n-1}-\vecomega_{n-1}\|'_{\xi}<\varepsilon_{n}\theta_n.
$$
\end{proof}


\section{Conjugacy to torus translation}
\label{section:Differentiable rigidity}
In this section we give a sufficient condition for a conjugacy of the flow of $X$ to a torus translation to have Gevrey regularity.

\subsection{Convergence of the conjugation}

Fix $s\geq1$ and let $\rho>0$. Assume that $X$ is infinitely renormalizable, i.e.
$$
\|X_n-\omega_n\|_{\FF'_{\rho_n}}\leq\varepsilon_n,
\qquad
n\geq1.
$$  
Notice that
\begin{equation}
X_n=\lambda_n\,(U_1 \circ T_1^{-1}\circ \cdots \circ U_{n}\circ T_n^{-1})^*(X)\in\FF'_{\rho_n},
\end{equation}
with $U_n:=\id+\fU_n(X_{n-1})\in\FF'_{\rho_{n-1}}$ and $\lim_n\rho_n=\infty$. 
Furthermore, we can write
\begin{equation}\label{formula X n with Vs}
P_n^*X_n= \lambda_n\, h_{n}^*(X)
\end{equation}
by considering the $s$-Gevrey diffeomorphisms
\begin{equation}\label{def H n}
h_{n}:=g_1\circ\dots\circ g_n
\end{equation}
and
\begin{equation*}
g_{n}:=
P_{n-1}^{-1}\circ U_{n} \circ P_{n-1}\,,\quad n\geq 1.
\end{equation*}

For convenience of notations, set $T_0=P_0=I$ to be the identity matrix. Notice that $|I|=1$.

Define,
$$
r_n:=
\frac{\rho_{n-1}-\nu}{2\beta^2 B_{n-1}^{1/s}}
,
\qquad
n\geq1.
$$
We recall that $0<\nu<\rho_n/(1+\beta+\beta^2)$. 

\begin{lemma}\label{lemma Wn-id}
For every $n\geq1$,
\begin{equation}\label{norm of Wn-I in Drho n+1}
\|g_n-\id\|_{\CC_{r_n}}
\leq 
8(C_{\nu}-1)\frac{|P_{n-1}^{-1}| }{\sigma_{n-1}}\|\Ii_{n-1}^-X_{n-1}\|_{\FF_{\rho_{n-1}}}.
\end{equation}
\end{lemma}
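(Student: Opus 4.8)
The plan is to unwind the definition $g_n = P_{n-1}^{-1}\circ U_n\circ P_{n-1}$, where $U_n = \id + \fU_n(X_{n-1})$, so that $g_n - \id = P_{n-1}^{-1}\circ \fU_n(X_{n-1})\circ P_{n-1}$ in the sense that $g_n(x) - x = P_{n-1}^{-1}\,\fU_n(X_{n-1})(P_{n-1}x)$ (the linear parts cancel since $P_{n-1}\in\SL(d,\Zz)$ acts by a matrix). The estimate then factors into three contributions: the operator norm $|P_{n-1}^{-1}|$ coming from post-composition with the linear map; a change-of-variable estimate controlling $\|f\circ P_{n-1}\|$ in terms of $\|f\|$ on a smaller Gevrey radius (this is where $B_{n-1}$ and the halving of the radius enter, giving $r_n$); and finally the bound on $\|\fU_n(X_{n-1})\|'_{\rho_n}$ itself, which is exactly estimate \eqref{estimate U-id} of Theorem~\ref{thm unif} with $t=1$ and $\vecw=\omega_{n-1}$, namely $\|\fU_n(X_{n-1})\|'_{\rho_n}\le \frac{8(C_\nu-1)}{\sigma_{n-1}}\|\Ii^-_{n-1}X_{n-1}\|_{\rho_{n-1}}$.

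First I would recall that $\fU_n(X_{n-1})\in \Ii^-_{\sigma_{n-1}}\FF'_{\rho_n}$, so its nonzero Fourier modes are supported on the far-from-resonance cone $I^-_{\sigma_{n-1},\omega_{n-1}}$; this is precisely the set on which the expansion factor of $P_{n-1}^\top$ is controlled by $B_{n-1}$ (by the definition of $B_{n-1}$ and Lemma~\ref{proposition TT v2}). Then, applying the change-of-variables bound $\|f\circ P_{n-1}\|_{\FF_{\rho'}}\le \|f\|_{\FF_{\rho}}$ when $\rho' = \rho/B_{n-1}^{1/s}$ (Lemma~\ref{proposition TT v2}, with $\Ii^-$ in place), together with the inclusion Lemma~\ref{lem:equivalence_norms} to pass between the $\CC$- and $\FF$-norms (this accounts for the $C_\nu$ factor and for the extra factor $\beta$ and the $-\nu$ shift in the denominator of $r_n$), I would get $\|\fU_n(X_{n-1})\circ P_{n-1}\|_{\CC_{r_n}}$ bounded by $\|\fU_n(X_{n-1})\|'_{\rho_n}$ up to harmless constants. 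Post-composing with the linear map $P_{n-1}^{-1}$ multiplies the sup-norm of the function (and of its derivatives, suitably) by at most $|P_{n-1}^{-1}|$; since $g_n-\id$ is $P_{n-1}^{-1}$ applied to a function whose argument is rescaled by $P_{n-1}$, the $\CC_{r_n}$-norm picks up exactly the single factor $|P_{n-1}^{-1}|$.

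Combining these three steps and plugging in \eqref{estimate U-id} yields
$$
\|g_n-\id\|_{\CC_{r_n}} \le |P_{n-1}^{-1}|\cdot \|\fU_n(X_{n-1})\|'_{\rho_n}\le 8(C_\nu-1)\frac{|P_{n-1}^{-1}|}{\sigma_{n-1}}\|\Ii^-_{n-1}X_{n-1}\|_{\FF_{\rho_{n-1}}},
$$
which is the claim. The main obstacle I anticipate is bookkeeping the radius losses: one must verify that the chain of inequalities $\rho_n \rightsquigarrow r_n$ actually closes, i.e. that $\rho_{n-1}$ is large enough (relative to $\nu$, $\beta$, and $B_{n-1}$) for the change of variables by $P_{n-1}$, the $\FF\leftrightarrow\CC$ inclusions, and the Cauchy-type passage from $\FF'$ to $\CC$ to all apply with the stated $r_n = (\rho_{n-1}-\nu)/(2\beta^2 B_{n-1}^{1/s})$; the factor $2$ in the denominator is precisely the slack absorbing the shift $\nu$ in Lemma~\ref{lem:equivalence_norms} and the $\delta$-type loss, and one should check no further loss is needed. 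The actual norm manipulations are routine given Lemmas~\ref{lem:equivalence_norms}, \ref{proposition TT v2} and Theorem~\ref{thm unif}.
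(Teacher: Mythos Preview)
Your approach is correct and essentially identical to the paper's: factor $g_n-\id=P_{n-1}^{-1}\,\fU_n(X_{n-1})\circ P_{n-1}$, pull out $|P_{n-1}^{-1}|$, pass from $\CC_{r_n}$ to an $\FF$-norm via Lemma~\ref{lem:equivalence_norms}(1), undo the composition with $P_{n-1}$ via Lemma~\ref{proposition TT v2}, and finish with \eqref{estimate U-id}. Two minor bookkeeping corrections: the estimate \eqref{estimate U-id} places $\fU_n(X_{n-1})$ in $\FF'_{(\rho_{n-1}-\nu)/\beta}$, not $\FF'_{\rho_n}$ (the paper chooses the auxiliary shift $\mu_n=(\rho_{n-1}-\nu)/(2\beta B_{n-1}^{1/s})$ so that $\zeta_n:=B_{n-1}^{1/s}(\beta r_n+\mu_n)=(\rho_{n-1}-\nu)/\beta$ exactly matches this radius); and the factor $8(C_\nu-1)$ comes entirely from \eqref{estimate U-id}, not from the inclusion lemma (part~(1) of Lemma~\ref{lem:equivalence_norms} carries no $C_\nu$).
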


\begin{proof}
Lemma~\ref{lem:equivalence_norms}, Lemma~\ref{proposition TT v2}  and Theorem~\ref{thm unif} imply that
\begin{equation*}
\begin{split}
\|P_{n-1}^{-1} \circ U_n\circ P_{n-1}\|_{\CC_{r_n}} 
&\leq
|P_{n-1}^{-1}|\,\|U_n\circ P_{n-1}\|_{\FF_{\beta r_n+\mu_n}} \\
&\leq
|P_{n-1}^{-1}|\,\|U_n\|_{\FF_{\zeta_n}} \\
&\leq
\frac{8(C_{\nu}-1)\,|P_{n-1}^{-1}|}{\sigma_{n-1}} \, \|\Ii_{n-1}^-X_{n-1}\|_{\FF_{\rho_{n-1}}},
\end{split}
\end{equation*}
where 
$$
\zeta_n=B_{n-1}^{1/s}(\beta r_n+\mu_n)
\te{and}\mu_n=\frac{\rho_{n-1}-\nu}{2\beta B_{n-1}^{1/s}}.$$
\end{proof}

Given $\ell\in\Nn_0$, denote by $C^\ell(\Tt^d,\Rr^d)$ the space of $2\pi\Zz^d$-periodic function which have $\ell$ continuous derivatives. We consider the $C^\ell$-norm,
$$
\|f\|_{C^\ell}:=\sup_{|\alpha|\leq \ell}\|\partial^\alpha f\|_{C^0}.
$$
Also define,
$$
\Theta_n:=\frac{|P_{n-1}^{-1}| }{\sigma_{n-1}}\|X_{n-1}-\vecomega_{n-1}\|_{\FF_{\rho_{n-1}}}.
$$

From now on we consider a sequence of positive real numbers $\{R_n\}_{n\geq0}$ satisfying,
\begin{equation}\label{eq defn R_n}
R_n\leq r_n\te{and} d^{\frac{s-1}{s}}R_{n}<R_{n-1},\quad n\geq1.
\end{equation}

\begin{theorem}[Topological conjugacy]\label{th: top conjugacy}
If
$$
\sum_{n=1}^\infty
\frac{\Theta_n}{(R_{n-1}-d^{\frac{s-1}{s}}R_n)^s} <\infty,
$$
then $h:=\lim_nh_n$ is a homeomorphism and $\phi_X^t\circ h=h\circ \phi_\omega^t$.
\end{theorem}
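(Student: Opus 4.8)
The plan is to establish that the sequence of diffeomorphisms $h_n = g_1 \circ \cdots \circ g_n$ converges uniformly (in the $C^0$-sense, and in fact in a suitable Gevrey norm) to a limit $h$, that $h$ is a homeomorphism, and that it intertwines the two flows. First I would control the size of each $g_n - \id$. By Lemma~\ref{lemma Wn-id} we have $\|g_n - \id\|_{\CC_{r_n}} \leq 8(C_\nu - 1) \Theta_n'$ where $\Theta_n' = |P_{n-1}^{-1}|\sigma_{n-1}^{-1}\|\Ii_{n-1}^- X_{n-1}\|_{\FF_{\rho_{n-1}}}$, and since $\|\Ii_{n-1}^- X_{n-1}\|_{\FF_{\rho_{n-1}}} \leq \|X_{n-1} - \omega_{n-1}\|_{\FF_{\rho_{n-1}}}$ (the projection removes the constant mode and has norm $\leq 1$), this is bounded by a constant times $\Theta_n$. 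Restricting to the radii $R_n \leq r_n$ from~\eqref{eq defn R_n}, we get $\|g_n - \id\|_{\CC_{R_n}} \leq 8(C_\nu-1)\Theta_n$.

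Next I would invoke the composition estimates already proved for Gevrey maps. To apply Lemma~\ref{lemma Hn-Hn-1 v1} with the radii $\rho_n = R_n$ (which satisfy $d^{(s-1)/s} R_n < R_{n-1}$) I need the smallness hypothesis $\|g_n - \id\|_{\CC_{R_n}} \leq (R_{n-1} + d^{(s-1)/s} R_n)^s/(2^s d^{s-1}) - R_n^s$; this follows for $n$ large from the convergence of the series $\sum \Theta_n/(R_{n-1} - d^{(s-1)/s}R_n)^s$, which forces $\Theta_n \to 0$, and moreover forces $\Theta_n = o\big((R_{n-1}-d^{(s-1)/s}R_n)^s\big)$, hence in particular $\Theta_n$ is small compared to the right-hand side above (one checks $(R_{n-1}+d^{(s-1)/s}R_n)^s/(2^sd^{s-1}) - R_n^s \gtrsim (R_{n-1} - d^{(s-1)/s}R_n)^s$ up to a constant; here one may need to discard finitely many initial terms and start the composition from some index $n_0$). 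Lemma~\ref{lemma Hn-Hn-1 v1} then gives
\begin{equation*}
\|h_n - h_{n-1}\|_{\CC_{R_n}} \leq \Big(1 + \frac{2^s}{(R_{n-1} - d^{(s-1)/s}R_n)^s}\sum_{i=1}^{n-1}\|g_i - \id\|_{\CC_{R_i}}\Big)\|g_n - \id\|_{\CC_{R_n}}.
\end{equation*}
Using Lemma~\ref{lemma Hn-id vf v1} the partial sums $\sum_i \|g_i - \id\|_{\CC_{R_i}}$ are uniformly bounded (again they are controlled by $\sum \Theta_i < \infty$, which is implied by the hypothesis since $R_{n-1} - d^{(s-1)/s}R_n$ is bounded), so $\|h_n - h_{n-1}\|_{\CC_{R_n}} \lesssim \Theta_n$. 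Since $R_n$ is decreasing and bounded below away from $0$ — more precisely since $\|\cdot\|_{C^0} \leq \|\cdot\|_{\CC_{R}}$ for every $R$ (the constant term dominates) — the telescoping sum $\sum_n \|h_n - h_{n-1}\|_{C^0}$ converges, so $h_n \to h$ uniformly and $h$ is continuous with $h - \id$ periodic; one also passes the estimate to $Dh_n$ via the $\CC'$-norm or Cauchy's estimate Lemma~\ref{lm:cauchy} to see $h$ is $C^1$, or at least gets $C^0$ convergence of inverses to conclude $h$ is a homeomorphism (the maps $h_n$ are diffeomorphisms isotopic to the identity and the limit of their inverses, controlled the same way, provides a continuous inverse).

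Finally, the intertwining relation. The renormalization was built so that $X_n = \lambda_n h_n^* X$ after conjugating by $P_n$, i.e.~\eqref{formula X n with Vs} gives $P_n^* X_n = \lambda_n h_n^* X$, equivalently $h_n^* X = \lambda_n^{-1} P_n^* X_n$. Since $X_n \to \omega_\infty$ in a strong enough sense (boundedness of $\|X_n - \omega_n\|'_{\rho_n}$ together with $\rho_n \to \infty$), the vector field $h_n^* X$ converges, after the linear rescalings encoded in $\lambda_n^{-1} P_n$, to a constant vector field parallel to $\omega$; passing to the limit in the flow relation $\phi_X^t \circ h_n \approx h_n \circ \phi_{c_n}^t$ (with $c_n$ the rescaled constant fields) and using uniform convergence $h_n \to h$ together with continuous dependence of flows on the vector field yields $\phi_X^t \circ h = h \circ \phi_\omega^t$. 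The main obstacle I anticipate is precisely this last step: making rigorous the passage to the limit in the conjugacy equation, since the $h_n$ converge only in $C^0$ (or $C^1$) while the vector fields live in Gevrey spaces with growing radius, so one must carefully track how the rescaling matrices $P_n$ and factors $\lambda_n$ interact with the decay of $X_n - \omega_n$; equivalently one shows $\|h_n^* X - \omega\|_{C^0} \to 0$ directly and then uses standard ODE comparison to get the flows to converge. The other technical point requiring care is verifying the smallness hypotheses of Lemmas~\ref{lemma Hn-id vf v1} and~\ref{lemma Hn-Hn-1 v1} hold from some index onward and handling the finitely many initial compositions separately.
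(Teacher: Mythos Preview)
Your outline follows the same route as the paper: bound $\|g_n-\id\|_{\CC_{R_n}}$ via Lemma~\ref{lemma Wn-id}, feed this into Lemma~\ref{lemma Hn-Hn-1 v1} to control $h_n-h_{n-1}$, pass to $C^0$ to get a Cauchy sequence, handle the inverses, and finally pass to the limit in the flow relation.

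One slip worth correcting: from the displayed inequality you do \emph{not} get $\|h_n-h_{n-1}\|_{\CC_{R_n}}\lesssim\Theta_n$. The prefactor $1+2^s(R_{n-1}-d^{(s-1)/s}R_n)^{-s}\sum_i\|g_i-\id\|$ is uniformly bounded only if $(R_{n-1}-d^{(s-1)/s}R_n)^s$ is bounded \emph{below}, which is not assumed (and is typically false, since $R_n\to0$ in the applications). The correct bound is $\|h_n-h_{n-1}\|_{\CC_{R_n}}\lesssim \Theta_n/(R_{n-1}-d^{(s-1)/s}R_n)^s$, which is exactly the summand in the hypothesis; this is why the statement is formulated that way. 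Your Cauchy conclusion survives unchanged.

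On the two points you flagged as obstacles, the paper is more direct than your sketch. For the inverses it uses the identity $\|h_n^{-1}-h_{n-1}^{-1}\|_{C^0}=\|g_n^{-1}-\id\|_{C^0}=\|g_n-\id\|_{C^0}$ (since $h_n^{-1}=g_n^{-1}\circ h_{n-1}^{-1}$), which is immediately summable. For the conjugacy it writes $\phi_X^t\circ h_n=h_n\circ\phi^t_{\lambda_n^{-1}P_n^*X_n}$ and bounds
\[
\bigl\|\phi^t_{\lambda_n^{-1}P_n^*X_n}-\phi_\omega^t\bigr\|_{C^0}\le |t|\,\lambda_n^{-1}|P_n^{-1}|\,\|X_n-\omega_n\|_{C^0}\le |t|\,C_1|\omega|\,\Theta_{n+1},
\]
using $\sigma_n<|\omega_n|$ and $|\omega_n|\le C_1|\omega|\lambda_n$ from Lemma~\ref{mcf estimates}; this is precisely your ``show $\|h_n^*X-\omega\|_{C^0}\to0$'' made explicit.
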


\begin{proof}
Notice that, by Lemma~\ref{eq standard estimates2},
$$
\frac{(R_{n-1}+d^{\frac{s-1}{s}}R_n)^s}{2^sd^{s-1}}-R_n^s\geq \frac{\left(R_{n-1}-d^{\frac{s-1}s}R_n\right)^s}{2^sd^{s-1}}.
$$
The convergence of the series in the hypothesis implies that $$\lim_n\Theta_n/(R_{n-1}-d^{(s-1)/s}R_n)^s=0.$$ Thus, for $n$ sufficiently large we have
\begin{equation}\label{eq:condition}
\frac{8(C_{\nu}-1)|P_{n-1}^{-1}|}{\sigma_{n-1}}
\,
\|\Ii_{n-1}^-X_{n-1}\|_{\FF_{\rho_{n-1}}}
\leq
\frac{(R_{n-1}+d^{\frac{s-1}{s}}R_n)^s}{2^sd^{s-1}}-R_n^s.
\end{equation}
This condition is sufficient to apply Lemma~\ref{lemma Hn-Hn-1 v1}. So we get,
\begin{equation*}\label{estimate Hn-Hn-1 f}
\|h_{n}-h_{n-1}\|_{\CC_{R_{n}}}
\leq
\frac{\Gamma_n}{(R_{n-1}-d^{\frac{s-1}{s}}R_n)^s}
 \|g_{n}-\id\|_{\CC_{R_{n}}},
\end{equation*}
where
$$
\Gamma_n:=(R_{n-1}-d^{\frac{s-1}{s}}R_n)^s+ 
2^s
\sum_{i=1}^{n-1}
\|g_i-\id\|_{\CC_{R_i}}.
$$
Follows from Lemma~\ref{lemma Wn-id}, the properties of $R_n$ and $\sum_n\Theta_n<\infty$ that
$\Gamma:=\sup_n\Gamma_n<\infty$.
So, 
\begin{equation*}
\begin{split}
\|h_n-h_{n-1}\|_{\CC_{R_{n}}}
&\leq
\frac{\Gamma}{(R_{n-1}-d^{\frac{s-1}{s}}R_n)^s}\|g_{n}-\id\|_{\CC_{R_{n}}}.
\end{split}
\end{equation*}
Using again Lemma~\ref{lemma Wn-id}, we have
\begin{equation}\label{estimate h_n-h_n-1}
\|h_n-h_{n-1}\|_{\CC_{R_{n}}}\leq 8(C_\nu-1)\Gamma\frac{\Theta_n}{(R_{n-1}-d^{\frac{s-1}{s}}R_n)^s}.
\end{equation}
Noticed that $\|h_n-h_{n-1}\|_{C^0}\leq \|h_n-h_{n-1}\|_{\CC_{s,R_{n}}}$. Thus, $h_n-\id$ is a Cauchy sequence in $C^0$. Hence, it converges to $h-\id\in C^0(\Tt^d,\Rr^d)$ where $h:=\lim_n h_n$. To show that $h$ is a homeomorphism we prove that the inverse $h_n^{-1}$ also converges in $C^0$. Notice that,
$$
\|h_{n}^{-1}-h_{n-1}^{-1}\|_{C^0}=\|g_{n}^{-1}-\id\|_{C^0},
$$ 
and 
$$
\|g_n^{-1}-\id\|_{C^0}=\|(g_n-\id)\circ g_{n}^{-1}\|_{C^0}=\|g_n-\id\|_{C^0}.
$$
Thus,
$$
\|h_{n}^{-1}-h_{n-1}^{-1}\|_{C^0}=\|g_n-\id\|_{C^0}\leq \|g_{n}-\id\|_{\CC_{R_{n}}}.
$$
It follows immediately that $h_n^{-1}$ converges in $C^0$. Thus $h$ is a homeomorphism. 

Finally, we show that $h$ conjugates the flow of $X$ to a linear flow with frequency $\omega$. First notice that
$$
\phi^t_X\circ h_n = h_n \circ \phi^t_{\lambda_n^{-1}P_n^{*}X_n}.
$$
Since $\lambda_n^{-1}P_n^{*}X_n=\omega+\lambda_n^{-1}P_n^*(X_n-\omega_n)$
we get,
\begin{align*}
\left\|\phi^t_{\lambda_n^{-1}P_n^{*}X_n}-\phi^t_{\omega}\right\|_{C^0}&\leq\left\|\int_0^t\lambda_n^{-1}P_n^*(X_n-\omega_n)\circ \phi^s_{\lambda_n^{-1}P_n^{*}X_n}\,ds\right\|_{C^0}\\
&\leq |t|\lambda_n^{-1}|P_n^{-1}|\|X_n-\omega_n\|_{C^0}\\
&= |t|\sigma_{n}\lambda_n^{-1}\Theta_{n+1}\\
&\leq  |t||\omega_n|\lambda_n^{-1}\Theta_{n+1}.
\end{align*}
Since $\sigma_n<|\omega_n|$ by definition of the sequence $\sigma_n$ (see Theorem~\ref{thm unif}) and $|\omega_n|\leq  C_1 |\omega|\lambda_n$ by Lemma~\ref{mcf estimates}, the time-$t$ map $\phi^t_{\lambda_n^{-1}P_n^{*}X_n}$ converges to $\phi^t_{\omega}$ in the $C^0$-topology for every $t\in\Rr$. 
\end{proof}

\begin{theorem}[$C^\ell$ conjugacy]\label{thm: gevrey conjugacy}
If there exists $\ell\in\Nn$, $\eta>0$ and $C>0$ such that,
$$
\sum_{n=1}^\infty
\frac{\Theta_n}{(R_{n-1}-d^{\frac{s-1}{s}}R_n)^s R_n^{s\ell}} \leq C \eta^{s\ell},
$$
then $h:=\lim_nh_n$ is a $C^\ell$ diffeomorphism and moreover
\begin{equation}\label{eq:Gevrey h}
\|\partial^{\alpha}(h-\id)\|_{C^0}\leq C' \alpha!^{s} \eta^{2s\ell},\quad  |\alpha|= \ell
\end{equation}
where $C'>0$ is independent of $\ell$.
\end{theorem}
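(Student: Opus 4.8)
The plan is to feed the $\CC_{R_n}$-estimate for the telescoping differences $h_n-h_{n-1}$, already obtained inside the proof of Theorem~\ref{th: top conjugacy}, into the Cauchy inequality of Lemma~\ref{lm:cauchy}, and then to sum. Throughout we work in the situation of Theorem~\ref{th: top conjugacy}, so $h=\lim_n h_n$ exists, is a homeomorphism, and satisfies $\phi_X^t\circ h=h\circ\phi_\omega^t$; only the $C^\ell$ regularity, the quantitative bound, and the invertibility of $h$ remain. From the hypothesis here together with the summability already in force for Theorem~\ref{th: top conjugacy}, and using the elementary inequality $R_n^{-sk}\leq 1+R_n^{-s\ell}$ for $0\leq k\leq\ell$, one first records that
$$
\sum_{n\geq 1}\frac{\Theta_n}{R_n^{sk}(R_{n-1}-d^{\frac{s-1}{s}}R_n)^s}<\infty,\qquad 0\leq k\leq\ell .
$$

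Next I would recall from the proof of Theorem~\ref{th: top conjugacy} that, for all large $n$,
$$
\|h_n-h_{n-1}\|_{\CC_{R_n}}\leq 8(C_\nu-1)\,\Gamma\,\frac{\Theta_n}{(R_{n-1}-d^{\frac{s-1}{s}}R_n)^s},\qquad \Gamma:=\sup_n\Gamma_n<\infty ,
$$
the finitely many remaining differences being $C^\infty$ and hence harmless. Applying Lemma~\ref{lm:cauchy} and letting its inner radius shrink to $0$ gives, for every multi-index $\alpha$ with $|\alpha|=k\leq\ell$,
$$
\|\partial^\alpha(h_n-h_{n-1})\|_{C^0}\leq \frac{k!^s}{R_n^{sk}}\,\|h_n-h_{n-1}\|_{\CC_{R_n}}\leq 8(C_\nu-1)\,\Gamma\,k!^s\,\frac{\Theta_n}{R_n^{sk}(R_{n-1}-d^{\frac{s-1}{s}}R_n)^s}.
$$
Summing over $n$ and invoking the convergence recorded above shows that $(\partial^\alpha h_n)_n$ is Cauchy in $C^0$ for every $|\alpha|\leq\ell$; hence $h_n\to h$ in $C^\ell$ and $h-\id\in C^\ell(\Tt^d,\Rr^d)$.

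For the quantitative estimate \eqref{eq:Gevrey h}, fix $|\alpha|=\ell$ and telescope from $h_0=\id$:
$$
\|\partial^\alpha(h-\id)\|_{C^0}\leq\sum_{n\geq1}\|\partial^\alpha(h_n-h_{n-1})\|_{C^0}\leq 8(C_\nu-1)\,\Gamma\,\ell!^s\sum_{n\geq1}\frac{\Theta_n}{R_n^{s\ell}(R_{n-1}-d^{\frac{s-1}{s}}R_n)^s}\leq 8(C_\nu-1)\,\Gamma\,C\,\ell!^s\,\eta^{s\ell}.
$$
Since $\ell!/\alpha!$ is a multinomial coefficient, $\ell!^s\leq d^{s\ell}\alpha!^s$ for $|\alpha|=\ell$; after replacing $\eta$ by $\max\{\eta,d\}$ (which only strengthens the hypothesis) the right-hand side is $\leq C'\alpha!^s\eta^{2s\ell}$ with $C':=8(C_\nu-1)\Gamma C$ independent of $\ell$. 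Finally, $h$ is a $C^\ell$ diffeomorphism: each $g_n=P_{n-1}^{-1}\circ U_n\circ P_{n-1}$ is a near-identity diffeomorphism (as $\fU_n(X_{n-1})$ is small by Theorem~\ref{thm unif}), so each $Dh_n(x)$ is a finite product of invertible matrices, and since $\sum_n\|g_n-\id\|_{\CC_{r_n}}<\infty$ by Lemma~\ref{lemma Wn-id} the tail factor $D(g_{N+1}\circ g_{N+2}\circ\cdots)(x)$ stays within distance $<1$ of the identity for $N$ large; thus $Dh(x)=Dh_N(x)\cdot D(g_{N+1}\circ\cdots)(x)$ is invertible for every $x$, and together with $h$ being a $C^\ell$ homeomorphism the inverse function theorem yields $h^{-1}\in C^\ell$.

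The step I expect to be the main obstacle is the bookkeeping that keeps $C'$ free of $\ell$: this is exactly why the exponent $2s\ell$ appears rather than $s\ell$, the spare factor $\eta^{s\ell}$ being needed to absorb the combinatorial loss $\ell!^s/\alpha!^s\leq d^{s\ell}$ incurred in passing from Lemma~\ref{lm:cauchy} (which naturally produces $\ell!^s$) to the per-component bound \eqref{eq:Gevrey h}; a secondary point requiring care is that the single summability hypothesis, combined with the one already in force for Theorem~\ref{th: top conjugacy}, must control all derivatives of order $\leq\ell$ simultaneously, which is what $R_n^{-sk}\leq 1+R_n^{-s\ell}$ provides.
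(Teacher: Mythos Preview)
Your argument follows the same overall architecture as the paper's proof—use the $\CC_{R_n}$-bound on $h_n-h_{n-1}$ from Theorem~\ref{th: top conjugacy}, pass to $C^0$-bounds on derivatives, and telescope—so the strategy is right. There are, however, two points where you diverge from the paper in a way that matters.

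First, you extract derivative bounds via Lemma~\ref{lm:cauchy}, which gives $\ell!^s$; the paper instead reads the bound $\|\partial^\alpha f\|_{C^0}\leq \alpha!^s R_n^{-s|\alpha|}\|f\|_{\CC_{R_n}}$ directly from the \emph{definition} of the $\CC_{R_n}$-norm (each summand is dominated by the full sum). This gives the sharper factor $\alpha!^s$ at once and makes your multinomial detour $\ell!^s\leq d^{s\ell}\alpha!^s$ unnecessary.

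Second, and more substantively, your explanation of the exponent $2s\ell$ is not the paper's. You account for the extra $\eta^{s\ell}$ by absorbing $d^{s\ell}$ after replacing $\eta$ by $\max\{\eta,d\}$; but this replacement alters the $\eta$ appearing in the conclusion, so you have not proved \eqref{eq:Gevrey h} for the \emph{given} $\eta$ when $\eta<d$. The paper's mechanism is different: from the hypothesis $\sum_n D_n\leq C\eta^{s\ell}$ one obtains $\sum_i\Theta_i\leq (\text{const})\,\eta^{s\ell}$ (via $\Theta_i=D_i(R_{i-1}-d^{\frac{s-1}{s}}R_i)^s R_i^{s\ell}$), and hence $\Gamma\leq C'\eta^{s\ell}$ with $C'$ independent of $\ell$. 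Combining this with $\sum_n D_n\leq C\eta^{s\ell}$ and the sharp $\alpha!^s$ bound yields \eqref{eq:Gevrey h} directly, with no modification of $\eta$. Your assertion that $C'=8(C_\nu-1)\Gamma C$ is independent of $\ell$ tacitly assumes $\Gamma$ is an $\ell$-free constant inherited from Theorem~\ref{th: top conjugacy}; the paper instead bounds $\Gamma$ using only the hypothesis of the present theorem, which is what makes the argument self-contained.

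The diffeomorphism part of your argument is equivalent to the paper's: both amount to showing that the tail $g_{N+1}\circ g_{N+2}\circ\cdots$ is $C^1$-close to the identity via Lemma~\ref{lemma Wn-id} and $\sum_n\Theta_n<\infty$.
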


\begin{proof}
Define
$$
D_n:=\frac{\Theta_n}{(R_{n-1}-d^{\frac{s-1}{s}}R_n)^s R_n^{s\ell}}.
$$
By hypothesis, there exists $\ell\in\Nn$ such that $\sum_n D_n<\infty$. 
From the definition of the $\CC_{R_n}$-norm we have for each $\alpha\in\Nn_0^d$ that
$$
\|\partial^{\alpha}(h_n-h_{n-1})\|_{C^0}\leq \frac{\alpha!^s}{R_n^{s|\alpha|}}\|h_n-h_{n-1}\|_{\CC_{R_n}}.
$$
So, by \eqref{estimate h_n-h_n-1} we get 
$$
\|\partial^{\alpha}(h_n-h_{n-1})\|_{C^0}\leq 8(C_\nu-1)\Gamma\alpha!^sD_n,\quad |\alpha|\leq \ell,
$$
where $\Gamma=\sup_n\Gamma_n$ and
$$
\Gamma_n:=(R_{n-1}-d^{\frac{s-1}{s}}R_n)^s+ 
2^s
\sum_{i=1}^{n-1}
\|g_i-\id\|_{\CC_{R_i}}.
$$
By the hypothesis of the theorem we conclude that $\Gamma\leq C'\eta^{s\ell}$ for some constant $C'>0$ independent of $\ell$. Since $\sum_n D_n<\infty$, the sequence $h_n-\id$ is Cauchy in $C^\ell(\Tt^d,\Rr^d)$. Thus, $h-\id\in C^\ell(\Tt^d,\Rr^d)$ where $h:=\lim_n h_n$. Taking in consideration Lemmas~\ref{lemma Hn-id vf v1} and ~\ref{lemma Hn-Hn-1 v1} we obtain, for any $m\geq1$ sufficiently large, that,
$$
\|g_m\circ\cdots\circ g_n -\id\|_{\CC_{R_n}}\leq \sum_{i=m}^n\|g_i-\id\|_{\CC_{R_i}},
\quad
n\geq m.
$$
In view of Lemma~\ref{lemma Wn-id} and $\sum_n\Theta_n<\infty$, the previous estimate gives $\|h-\id\|_{C^1}<1$. Thus, $h$ is a diffeomorphism. Let $|\alpha|=\ell$. To get the final estimate we write using a telescopic argument,
\begin{align*}
\|\partial^{\alpha}(h_n-\id)\|_{C^0}&\leq \sum_{i=1}^n\|\partial^{\alpha}(h_i-h_{i-1})\|_{C^0}\\
&\leq 8(C_\nu-1)\Gamma\alpha!^s\sum_{i=1}^n D_i\\
&\leq 8(C_\nu-1)C'C\alpha!^s\eta^{2s\ell}, 
\end{align*}
where we have assumed for convenience $h_0=\id$.
\end{proof}

\subsection{Sufficient conditions}
\label{sec: Sufficient conditions}

Define $R_0:=\rho_0-\lim\BB_n$, recalling~\eqref{eq defn BB_n}, and
$$
R_n:=\frac12\min\left\{\frac{R_0}{\beta^{2n}\Omega_n^{1/s}},\frac{R_{n-1}}{d^{\frac{s-1}{s}}}\right\},\quad n\geq 1,
$$
where,
$$
\Omega_n:=\max_{1\leq i \leq n} A_1\cdots A_{i-1}B_{i-1}.
$$
Notice that $\Omega_n\leq \Omega_{n+1}$. For convenience we set $\Omega_0=1$.
\begin{lemma}\label{lem:Rn} For every $n\geq 1$, ~\eqref{eq defn R_n} holds and
$$ \frac{R_0}{2^n\beta^{2n}\Omega_n^{1/s}}\leq 
R_n\leq 
\frac{R_0}{2\beta^{2n}\Omega_n^{1/s}}.
$$
\end{lemma}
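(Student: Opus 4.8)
The plan is to establish the two-sided estimate (the second displayed inequality) first, by induction on $n$, and then to read off both inequalities in \eqref{eq defn R_n} from it. Throughout one works in the regime where $R_0=\rho_0-\lim\BB_n>0$ (so in particular $\lim\BB_n<\infty$), and uses the elementary facts that $\BB_n$ is increasing in $n$, that $\Omega_n$ is non-decreasing with $\Omega_n\geq\Omega_1=B_0=1$ (because $P_0=I$), that $\beta=d^{(s-1)/s}s\geq1$ and hence $\beta^2\geq d^{(s-1)/s}$, that $A_i,B_i>0$ (the $T_i$ and $P_i$ are invertible integer matrices), and that $\rho_{n-1}-\nu>0$ (since $\nu<\rho_{n-1}/(1+\beta+\beta^2)$).

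For the two-sided estimate, the upper bound $R_n\leq R_0/(2\beta^{2n}\Omega_n^{1/s})$ is immediate because $R_n$ is one half of a minimum one of whose two arguments is exactly $R_0/(\beta^{2n}\Omega_n^{1/s})$. The lower bound I would prove by induction. For $n=1$ one has $\Omega_1=1$, so the claim is that the minimum defining $R_1$ is attained at its first argument, i.e.\ $\beta^2\geq d^{(s-1)/s}$, which holds. For the inductive step, if the minimum defining $R_n$ equals $R_0/(\beta^{2n}\Omega_n^{1/s})$ there is nothing to prove; otherwise $R_n=R_{n-1}/(2d^{(s-1)/s})$, and combining the inductive hypothesis with $\Omega_n\geq\Omega_{n-1}$ and $\beta^2\geq d^{(s-1)/s}$ gives $R_n\geq R_0/(2^n d^{(s-1)/s}\beta^{2(n-1)}\Omega_{n-1}^{1/s})\geq R_0/(2^n\beta^{2n}\Omega_n^{1/s})$.

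Turning to \eqref{eq defn R_n}: the inequality $d^{(s-1)/s}R_n<R_{n-1}$ is immediate from $R_n\leq R_{n-1}/(2d^{(s-1)/s})$ together with $R_{n-1}>0$ (which holds by the lower bound just proved when $n\geq2$, and by $R_0>0$ when $n=1$). For $R_n\leq r_n$, I would start from the upper bound $R_n\leq R_0/(2\beta^{2n}\Omega_n^{1/s})$ and use $\Omega_n\geq A_1\cdots A_{n-1}B_{n-1}$ (the $i=n$ term in the maximum defining $\Omega_n$) to reduce the desired inequality to $R_0\leq(\rho_{n-1}-\nu)\,\beta^{2n-2}(A_1\cdots A_{n-1})^{1/s}$. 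By the iterated formula $\rho_{n-1}\beta^{2n-2}(A_1\cdots A_{n-1})^{1/s}=\rho-\BB_{n-1}$ and the identity $R_0=\rho-\lim\BB_n$, this is in turn equivalent to $\lim\BB_n\geq\BB_{n-1}+\nu\beta^{2n-2}(A_1\cdots A_{n-1})^{1/s}$.

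That last inequality is the heart of the lemma, and it follows from $\lim\BB_n\geq\BB_n$ together with the lower bound $\BB_n-\BB_{n-1}\geq\nu(1+\beta+\beta^2)\beta^{2n-2}(A_1\cdots A_{n-1})^{1/s}$, obtained by discarding the nonnegative summands $\delta$ and $\log\phi_n$ in the $i=n$ term of \eqref{eq defn BB_n} and using $1+\beta+\beta^2\geq1$. I do not expect a real obstacle beyond this: the remaining work is purely the bookkeeping of making the products $A_1\cdots A_k$ and the powers of $\beta^2$ telescope correctly against the closed form of $\rho_n$ and the definition of $\BB_n$, plus the handful of positivity checks listed above.
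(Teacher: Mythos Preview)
Your proposal is correct and follows essentially the same route as the paper. The paper proves $R_n\leq r_n$ first (asserting without comment the inequality $\rho_{n-1}-\nu>R_0/(\beta^{2(n-1)}A_1^{1/s}\cdots A_{n-1}^{1/s})$, which is exactly the bound $\lim\BB_n\geq\BB_{n-1}+\nu\beta^{2n-2}(A_1\cdots A_{n-1})^{1/s}$ that you spell out), then says the two-sided estimate follows by induction; you simply reverse the order and supply more detail, but the ingredients are identical.
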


\begin{proof}
Using \eqref{rhon} we see that
$$
\rho_{n-1}-\nu>\frac{\rho_0-\lim\BB_n}{\beta^{2(n-1)}A_1^{1/s}\cdots A_{n-1}^{1/s}}.
$$
Hence,
$$
r_n=\frac{\rho_{n-1}-\nu}{2\beta^2B_{n-1}^{1/s}}>\frac{R_0}{2\beta^{2n}A_1^{1/s}\cdots A_{n-1}^{1/s}B_{n-1}^{1/s}}\geq R_n.
$$
This shows the first inequality in ~\eqref{eq defn R_n}.
The other one is immediate from the definition of $R_n$.

Finally, the last inequalities follow by induction on $n$.
\end{proof}

In the following we give a sufficient condition for the conjugacy $h$ to have $C^\ell$-smooth regularity in terms of the growth of the sequence $t_n$. Recall that $W_n=W(t_n)$ and $\Delta_n=\tau_{k_n}-W(\tau_{k_n})$. Define
\begin{equation}\label{def:sigma}
\sigma_n := n^{-1}C_1^{-1}\mu^{-1} e^{-(d-1)(t_{n+1} -t_{n})-(d-1)W_n-t_{n+1}+\Delta_{n}}.
\end{equation}
and
\begin{equation}
\label{eq defn mu}
\mu:=(2\beta^2)^sC_2.
\end{equation}

\begin{proposition}\label{lem:Cl conjugacy}
Let $\ell\in\Nn$. If 
$$
t_{n+1}\geq 
5(\ell+1) t_n,
$$
then there are constants $C,\eta>0$ not depending on $\ell$ such that~\eqref{eq:Gevrey h} holds.
\end{proposition}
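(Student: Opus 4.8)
The plan is to reduce the statement to the $C^\ell$ conjugacy criterion of Theorem~\ref{thm: gevrey conjugacy}: with the sequence $\{R_n\}$ constructed in Section~\ref{sec: Sufficient conditions}, I would show that the growth hypothesis $t_{n+1}\ge 5(\ell+1)t_n$ forces
$$
\sum_{n\ge 1}\frac{\Theta_n}{(R_{n-1}-d^{\frac{s-1}{s}}R_n)^s R_n^{s\ell}}\le C\eta^{s\ell},
$$
with $C,\eta>0$ independent of $\ell$; estimate~\eqref{eq:Gevrey h} is then immediate. The building blocks are Lemma~\ref{lem:Rn} (the two-sided bound $R_0/(2^n\beta^{2n}\Omega_n^{1/s})\le R_n\le R_0/(2\beta^{2n}\Omega_n^{1/s})$ together with $d^{\frac{s-1}{s}}R_n\le R_{n-1}/2$), Lemma~\ref{mcf estimates}(5) for $|P_{n-1}^{-1}|$, Lemma~\ref{mcf estimates 2}(2)--(3) for $A_1\cdots A_{n}$ and $A_1\cdots A_{n}B_{n}$, the definition~\eqref{def:sigma} of $\sigma_n$ (which is exactly the admissible choice in Lemma~\ref{mcf estimates 2} with $\xi_n=1/n$, so $(1+\xi_n)^n\le e$), Theorem~\ref{convergence Rn} (so that $X$ is infinitely renormalizable and $\|X_{n-1}-\omega_{n-1}\|'_{\rho_{n-1}}<\varepsilon_{n-1}$), and the bound $\varepsilon_{n-1}/\sigma_{n-1}\le c_0\sigma_{n-1}$ read off from~\eqref{eq:espilon} using $|\omega_{n-1}|\ge1$. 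A key bookkeeping point is that the choice $\mu=(2\beta^2)^sC_2$ in~\eqref{eq defn mu} is made precisely so that $C_2^n\mu^{-n}=(2\beta^2)^{-sn}$ cancels against the $\beta^{2n}$ factors appearing everywhere.

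First I would estimate the denominator: from Lemma~\ref{lem:Rn}, $(R_{n-1}-d^{\frac{s-1}{s}}R_n)^s\ge (R_{n-1}/2)^s\ge R_0^s/(2^{ns}\beta^{2(n-1)s}\Omega_{n-1})$ and $R_n^{s\ell}\ge R_0^{s\ell}/(2^{ns\ell}\beta^{2ns\ell}\Omega_n^{\ell})$, so the reciprocal is at most $(2\beta^2)^{sn(\ell+1)}\Omega_{n-1}\Omega_n^{\ell}/R_0^{s(\ell+1)}$. Next, combining Lemma~\ref{mcf estimates 2}(3) with $\mu=(2\beta^2)^sC_2$, $(1+1/i)^i\le e$, the bound $W_i\le t_i$ and $\Delta_i\ge0$ (the latter because $\Delta$ is non-decreasing and $\Delta(0)=0$) gives $A_1\cdots A_{i-1}B_{i-1}\le e|\omega|C_1(2\beta^2)^{-s(i-1)}e^{d t_{i-1}}$, hence $\Omega_n\le K e^{d t_{n-1}}$ with $K:=e|\omega|C_1$. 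For the numerator, $\Theta_n<c_0|P_{n-1}^{-1}|\sigma_{n-1}$; inserting Lemma~\ref{mcf estimates}(5) and~\eqref{def:sigma}, the exponents combine — the factor $\sigma_{n-1}$ supplying the decisive $e^{-(d-1)(t_n-t_{n-1})-t_n}$ — and using $W_{n-1}\ge-\log\delta_0$ and $\Delta_{n-1}\le t_{n-1}-W_{n-1}$ one gets $\Theta_n\le c_1\,e^{(2d-1)t_{n-1}-dt_n}$ for $n\ge2$, with $\Theta_1$ bounded by a constant.

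Multiplying the two estimates, the $n$-th summand is at most $c_2\,(L^{ns}M)^{\ell+1}\exp\{[(2d-1)+d(\ell+1)]t_{n-1}-dt_n\}$, where $L:=2\beta^2$, $M:=K/R_0^s$. Here the hypothesis enters decisively: $t_n\ge5(\ell+1)t_{n-1}$ makes the bracketed exponent at most $[(2d-1)-4d(\ell+1)]t_{n-1}\le-2d(\ell+1)t_{n-1}$, i.e. an exponential decay scaling linearly in $\ell+1$ (the precise value $5$ is not essential, only that it exceeds a fixed threshold above $2$). Iterating the hypothesis, $t_{n-1}\ge(5(\ell+1))^{n-2}t_1\ge10^{n-2}t_1$ for $n\ge2$, so the $n$-th summand is at most $c_2\bigl(L^{ns}M e^{-2d\,10^{n-2}t_1}\bigr)^{\ell+1}$. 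Since $10^{n-2}$ eventually dominates $ns$, there is $n_0$ — depending only on $s,d,\beta,R_0,t_1,|\omega|,C_1$, not on $\ell$ — with $L^{ns}Me^{-2d\,10^{n-2}t_1}\le2^{-n}$ for $n\ge n_0$, so the tail sums to at most $\sum_{n\ge n_0}2^{-n(\ell+1)}\le2^{-\ell}$, while the finitely many terms $n<n_0$ are bounded by $n_0(g^*)^{\ell+1}$ with $g^*:=\max_{1\le n<n_0}L^{ns}M$. Together with the constant bound on the $n=1$ term this yields $C\eta^{s\ell}$ for suitable $\ell$-independent $C,\eta$, and Theorem~\ref{thm: gevrey conjugacy} finishes the argument.

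The main difficulty I expect is precisely this last bookkeeping: tracking the $\ell$-dependence so that the final bound is genuinely $C\eta^{s\ell}$ with constants uniform in $\ell$. The compounded rescaling factor $(2\beta^2)^{sn(\ell+1)}$ grows geometrically in $n$ and would destroy summability even for fixed $\ell$; only the super-geometric growth of $t_n$ guaranteed by $t_{n+1}\ge5(\ell+1)t_n$ — which makes $R_n$, hence $R_n^{-s\ell}$, large, but makes $\sigma_{n-1}$ and therefore $\Theta_n$ exponentially small in $t_n$ — overcomes it, and the inequalities must be arranged so that the net $t_{n-1}$-exponent is negative and proportional to $\ell+1$, uniformly in $n$.
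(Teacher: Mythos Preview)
Your proposal is correct and follows the same overall strategy as the paper: bound $\Theta_n\lesssim |P_{n-1}^{-1}|\sigma_{n-1}$ via $\varepsilon_{n-1}\lesssim\sigma_{n-1}^2$, bound $1/R_n^{s(\ell+1)}$ through Lemma~\ref{lem:Rn}, insert the continued-fraction estimates of Lemmas~\ref{mcf estimates} and~\ref{mcf estimates 2}, and use the hypothesis $t_{n+1}\ge 5(\ell+1)t_n$ to force summability with the correct $\ell$-dependence, then invoke Theorem~\ref{thm: gevrey conjugacy}.

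The one substantive difference is how the geometric factor $(2\beta^2)^{sn(\ell+1)}$ coming from $R_n^{-s(\ell+1)}$ is disposed of. The paper keeps the factor $(C_2/\mu)^{n-1}$ in its bound for $\Omega_n$ and uses the exact identity $(2\beta^2)^sC_2\mu^{-1}=1$ (this is the content of~\eqref{eq defn mu}) to cancel it completely, so that the $n$-th summand reduces to the clean expression $a_n=e^{-d(t_n-4(\ell+1)t_{n-1})}$, which sums as a single geometric series $\sum_n e^{-d(\ell+1)t_1(n-1)}$. You instead discard $(C_2/\mu)^{i-1}\le1$ early when bounding $\Omega_n\le Ke^{dt_{n-1}}$, leaving the residual $(2\beta^2)^{sn(\ell+1)}$ explicit; you then compensate by iterating the hypothesis to get $t_{n-1}\ge 10^{n-2}t_1$, so that the super-exponential decay $e^{-2d(\ell+1)t_{n-1}}$ swallows the geometric growth in a tail $n\ge n_0$, with the finitely many initial terms handled separately. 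Both routes reach the form $C\eta^{s\ell}$; the paper's is a little tidier because the cancellation is exact and avoids the head/tail split, while yours is more robust in that it does not rely on tracking the $(C_2/\mu)^{n}$ factor through the maximum defining $\Omega_n$.
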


\begin{proof}
By Lemma~\ref{lem:Rn}
$$
(R_{n-1}-d^{\frac{s-1}{s}}R_n)^s>\frac{R_{n}^s}{2^s},\quad\forall\,n\geq 1.
$$
Moreover, from the definition of $\Theta_n$ and $\varepsilon_n$ (see Theorem~\ref{thm unif}) we have that,
$$
\Theta_n\leq \frac{|P_{n-1}^{-1}|}{\sigma_{n-1}}\varepsilon_{n-1}\leq \frac{|P_{n-1}^{-1}|\sigma_{n-1}}{C_\nu-1},\quad\forall\,n\geq1.
$$
Thus, by Lemma~\ref{lem:Rn},
\begin{align*}
\frac{\Theta_n}{(R_{n-1}-d^{\frac{s-1}{s}}R_n)^s R_n^{s\ell}}&\leq \frac{2^s\Theta_n}{R_n^{s(\ell+1)}}\\
&\leq \frac{2^s|P_{n-1}^{-1}|\sigma_{n-1}}{(C_\nu-1)R_n^{s(\ell+1)}}\\
&\leq c_1(2\beta^2)^{ns(\ell+1)}\Omega_n^{\ell+1}|P_{n-1}^{-1}|\sigma_{n-1},
\end{align*}
where
$$
c_1:=\frac{2^s}{(C_\nu-1)R_0^{s(\ell+1)}}.
$$

By Lemma~\ref{mcf estimates 2}, 
$$
\Omega_n\leq |\omega|C_1\left(\frac{C_2}{\mu}\right)^{n-1}\left(1+\frac1{n-1}\right)^{n-1} e^{(d+1)t_{n-1}}.
$$
Notice that
$$
\sigma_n\leq C_1^{-1}\mu^{-1} e^{-d(t_{n+1} -t_{n})}.
$$
Moreover, by Lemma~\ref{mcf estimates} and the definition of $\sigma_n$,
\begin{align*}
|P_{n-1}^{-1}|\sigma_{n-1}&\leq C_1^{-1}\mu^{-1}  C_2|\omega| \, e^{(d-1)(t_{n-1}-W_{n-1})+ dW_{n-1}-d(t_{n}-t_{n-1})}\\
&\leq C_1^{-1}\mu^{-1} C_2|\omega|\, e^{-d t_{n}+ 2d t_{n-1}}.
\end{align*}
Putting these estimates together we get
$$
\Omega_n^{\ell+1}|P_{n-1}^{-1}|\sigma_{n-1}
\leq 
e^{\ell+1}(C_1\mu C_2^{-1})^\ell|\omega|^{\ell+2} (\mu^{-1}  C_2)^{n(\ell+1)}e^{-d t_{n}+ 4d(\ell+1) t_{n-1}}.
$$
Thus,
$$
\frac{\Theta_n}{(R_{n-1}-d^{\frac{s-1}{s}}R_n)^s R_n^{s\ell}}
\leq 
c_2 a_n
$$
where
$$
c_2:=c_1e^{\ell+1}(C_1\mu C_2^{-1})^\ell|\omega|^{\ell+2}
$$
and, using the definition of $\mu$,
\begin{align*}
a_n:=&((2\beta^2)^s C_2\mu^{-1})^{n(\ell+1)}e^{-d t_{n}+ 4d(\ell+1) t_{n-1}} \\
=&
e^{-d (t_{n} - 4(\ell+1)t_{n-1})}.
\end{align*}
By the hypothesis on $t_n$ we conclude that $a_n\leq e^{-d(\ell+1)t_{n-1}}$ and $t_{n}\geq t_1(5(\ell+1))^{n-1} \geq t_1 n$. 
Hence,
\begin{align*}
\sum_{n=1}^\infty \frac{\Theta_n}{(R_{n-1}-d^{\frac{s-1}{s}}R_n)^s R_n^{s\ell}}
&\leq 
c_2\sum_{n=1}^\infty a_n \\
&\leq
c_2\sum_{n=1}^\infty e^{-d(\ell+1)t_1(n-1)} \\
&= \frac{c_2 e^{d(\ell+1)t_1}}{e^{d(\ell+1)t_1}-1} \\
&\leq
\frac{c_2 e^{d(\ell+1)t_1}}{e^{2dt_1}-1}. 
\end{align*}
\end{proof}

\subsection{Class of frequency vectors}

Recall that the numbers $A_n$, $\eta_n$ and $|T_n|$ depend on the choice of a strictly increasing unbounded sequence $t_n$.

\begin{lemma}\label{condition W renormalization}
If 
\begin{equation}
\label{eq defn sum t_n}
\sum_{n=0}^\infty e^{-\frac{1}{s}\Delta_n}t_{n+1}<\infty,
\end{equation}
then $\lim \BB_n<\infty$.
\end{lemma}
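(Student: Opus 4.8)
The plan is to bound the $i$-th summand of $\BB_n$ in~\eqref{eq defn BB_n} by a geometrically decaying factor $2^{-i}$ times a quantity that the hypothesis~\eqref{eq defn sum t_n} keeps summable, and then sum over $i$. The crucial input is Lemma~\ref{mcf estimates 2}: by the very definition~\eqref{def:sigma}, the sequence $\sigma_n$ equals the right-hand side of the hypothesis of that lemma with $\xi_n=n^{-1}$, so conclusion~(2) applies and gives, for $n\ge1$,
\[
A_1\cdots A_n\le\Bigl(1+\tfrac{1}{n}\Bigr)^{n}C_2^{\,n}\mu^{-n}e^{-\Delta_n}\le e\,C_2^{\,n}\mu^{-n}e^{-\Delta_n}.
\]
Raising to the power $1/s$, multiplying by $\beta^{2n}$, and invoking the definition $\mu=(2\beta^2)^sC_2$ from~\eqref{eq defn mu} --- chosen \emph{precisely} so that $\beta^{2}(C_2/\mu)^{1/s}=\tfrac12$ --- yields the basic estimate
\[
\beta^{2n}A_1^{1/s}\cdots A_n^{1/s}\le e\,2^{-n}e^{-\Delta_n/s},\qquad n\ge0,
\]
with the empty-product convention $\beta^{0}\cdot1=1\le e$ for $n=0$ (the first factor $A_1$ depends on the separately fixed $\sigma_0$ but is a finite constant, so the estimate holds up to adjusting the constant $e$ if necessary). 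We also record that $\Delta_n\ge\Delta_0=0$ since $W(t)\le t$, hence $e^{-\Delta_n/s}\le1$.

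Next I would split $\BB_n$ according to the three terms $\delta$, $\nu(1+\beta+\beta^2)/(\beta^2A_i^{1/s})$ and $\log\phi_i$ inside the parentheses of~\eqref{eq defn BB_n}. By the basic estimate, the $\delta$-contribution is at most $\delta e\sum_{i\ge1}2^{-i}=\delta e$; the middle contribution cancels one factor $A_i^{1/s}$ and is at most $\nu(1+\beta+\beta^2)e\sum_{i\ge1}2^{-(i-1)}=2e\,\nu(1+\beta+\beta^2)$; both are finite constants, using only $e^{-\Delta_i/s}\le1$. The remaining contribution is $\sum_{i\ge1}\beta^{2i}A_1^{1/s}\cdots A_i^{1/s}\log\phi_i\le e\sum_{i\ge1}2^{-i}e^{-\Delta_i/s}\log\phi_i$, and estimating $\log\phi_i$ is the main obstacle.

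Recalling $\phi_i=\max\{7(d+1)|\eta_i|\,|T_i|(1+s^s/\delta^s)\,\varepsilon_{i-1}/(\varepsilon_i\theta_i),\,1\}$, the plan is to bound $\log\phi_i$ (for $i$ large) by a linear combination of $t_{i-1},t_i,t_{i+1},W_{i-1},W_i$ and $\log i$ with constant coefficients, using $\log|\eta_i|=(d-1)(t_i-t_{i-1})$, Lemma~\ref{mcf estimates}(6) for $\log|T_i|$, the explicit formula~\eqref{def:sigma} for $\sigma_i$ and $\sigma_{i-1}$, Lemma~\ref{mcf estimates}(3) for $\log|\omega_i|$, and the formula~\eqref{eq:espilon}, from which $\varepsilon_{i-1}\le c\,\sigma_{i-1}$ and $1/\varepsilon_i\le c'(\sigma_i^{-1}+|\omega_i|\sigma_i^{-2})$ for constants depending only on $s,d,\nu$. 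Since $W_j\le t_j\le t_{i+1}$ and $\Delta_j\le t_j+\log\delta_0$ for $j\le i$, every such term is $O(t_{i+1})+O(\log i)+O(1)$, so $\log\phi_i\le c_0(t_{i+1}+\log i+1)$ for a constant $c_0$ independent of $i$ (here I take $\theta_i$ bounded below, e.g.\ $\theta_i\equiv1$, so that $-\log\theta_i$ is absorbed). Consequently
\begin{align*}
\sum_{i\ge1}\beta^{2i}A_1^{1/s}\cdots A_i^{1/s}\log\phi_i
&\le e\,c_0\sum_{i\ge1}2^{-i}e^{-\Delta_i/s}t_{i+1}+e\,c_0\sum_{i\ge1}2^{-i}(\log i+1)+\mathrm{const}\\
&\le e\,c_0\sum_{n\ge0}e^{-\Delta_n/s}t_{n+1}+\mathrm{const}<\infty
\end{align*}
by hypothesis~\eqref{eq defn sum t_n}. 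Adding the three contributions gives $\sup_n\BB_n<\infty$, and since $\BB_n$ is increasing, $\lim\BB_n<\infty$. The delicate point is precisely the bookkeeping for $\log\phi_i$: one must check that each of $|\eta_i|$, $|T_i|$ and $\varepsilon_{i-1}/\varepsilon_i$ grows at most like $e^{c\,t_{i+1}}$ (times a power of $i$), so that $\log\phi_i$ is only \emph{linear} in $t_{i+1}$ --- exactly the growth that~\eqref{eq defn sum t_n} tames against the geometric weight $2^{-i}e^{-\Delta_i/s}$ produced by the choice of $\mu$.
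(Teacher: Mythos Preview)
Your proof is correct and follows the same strategy as the paper: bound $\log\phi_i$ linearly in $t_{i+1}$ (via Lemma~\ref{mcf estimates} for $|\eta_i|,|T_i|$ and the explicit formula~\eqref{eq:espilon} together with~\eqref{def:sigma} for $\varepsilon_{i-1}/\varepsilon_i$), then combine with Lemma~\ref{mcf estimates 2}(2) and the identity $\beta^{2}(C_2/\mu)^{1/s}=\tfrac12$ coming from~\eqref{eq defn mu} to control the product $\beta^{2i}A_1^{1/s}\cdots A_i^{1/s}$ by $c\,2^{-i}e^{-\Delta_i/s}$. The paper's proof is terser---it observes directly that $\varepsilon_{n-1}/\varepsilon_n=(\sigma_{n-1}/\sigma_n)^2$ for large $n$ to get $\log(\varepsilon_{n-1}/(\varepsilon_n\theta_n))\le c_0 t_{n+1}$ in one line, and then leaves the three-term split of $\BB_n$ and the summation to the reader---whereas you make this bookkeeping explicit, which is a virtue.
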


\begin{proof}
It follows from the definition of $\varepsilon_n$ (see \eqref{eq:espilon}) that
$$
\frac{\varepsilon_{n-1}}{\varepsilon_n}=\left(\frac{\sigma_{n-1}}{\sigma_n}\right)^2,
$$
for every $n\geq1$ sufficiently large. Moreover, from the definition of sequences $\sigma_n$ and $\theta_n=1$ we have that
$$
\log\left( \frac{\varepsilon_{n-1}}{\varepsilon_n\theta_n}\right)\leq 2d t_{n+1}+\log\theta_n^{-1}+2\log\left(\frac{n}{n-1}\right)\leq c_0 t_{n+1},
$$
for every $n$ sufficiently large and some constant $c_0>0$ independent of $n$.
Moreover, by Lemma~\ref{mcf estimates},
$$
\log(|\eta_n||T_n|)\leq \log(C_1C_2)+2d t_n.
$$
Hence, 
$$
\log\phi_n\leq  c_1 t_{n+1},
$$
for every $n$ sufficiently large and some constant $c_1>0$ independent of $n$.
Now the claim follows since by Lemma~\ref{mcf estimates 2}, we have that
$$
A_1^{1/s}\cdots A_n^{1/s}\leq (1+1/n)^\frac{n}{s}(\mu^{-1}C_2)^\frac{n}{s}e^{-\frac{1}{s}\Delta_n}
\leq 
c_2 \,e^{-\frac{1}{s}\Delta_n},
$$
for some constant $c_2>0$ independent of $n$.
\end{proof}

\subsection{Gevrey conjugation}

Notice that if $t_n=\tau_n$, the sequence of stopping times for $\omega$, then~\eqref{eq defn sum t_n} holds for $s$-Brjuno vectors.

The following theorem is the main result of this paper.

\begin{theorem}\label{th:main}
Let $X\in\FF'_{\rho_0}$ be an $s$-Gevrey vector field such that $\rot X=\vecomega=(\alpha,1)$ is an $s$-Brjuno vector. There is $\varepsilon_0=\varepsilon_0(\rho_0,\omega,s,d)>0$ such that if $\|X-\omega\|'_{\rho_0} < \varepsilon_0$, then there exists an $s$-Gevrey diffeomorphism $h$ such that $\phi_X^t\circ h=h\circ \phi_\omega^t$.
\end{theorem}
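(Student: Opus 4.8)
The plan is to feed $X$ into the renormalization scheme of Sections~\ref{section:Renormalization} and~\ref{section:Differentiable rigidity}, run with the multidimensional continued fractions data of $\omega$, and to read the regularity of the limiting conjugacy off the rate of convergence; the $s$-Brjuno hypothesis is precisely what provides a fast enough rate.

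First I would fix the renormalization data. Since $\omega=(\alpha,1)\in BC(s)$, Proposition~\ref{prop:equivBrjuno} gives $B_3(s)=\sum_{n\ge0}e^{-(\tau_n-W(\tau_n))/s}\tau_{n+1}<\infty$. I would choose a strictly increasing unbounded sequence $t_n$ adapted to the stopping times of $\omega$ (a sufficiently rapidly spaced subsequence of $\{\tau_n\}$), together with $\sigma_n$ as in~\eqref{def:sigma}, $\theta_n\equiv1$, and $\nu,\delta>0$ small, arranged so that~\eqref{eq defn sum t_n} holds; Lemma~\ref{condition W renormalization} then gives $\lim_n\BB_n<\infty$. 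Ensuring $\rho_0>\lim_n\BB_n$ and taking $\varepsilon_0=\varepsilon_0(\rho_0,\omega,s,d)>0$ small enough, any $X$ with $\rot X=\omega$ and $\|X-\omega\|'_{\rho_0}<\varepsilon_0$ lies in the domain $\VV_{\varepsilon_0}$ of $\RR_1$, so Theorem~\ref{convergence Rn} shows that $X$ is infinitely renormalizable with $\|X_n-\omega_n\|'_{\rho_n}<\varepsilon_n$ for every $n$ and $\rho_n\to\infty$.

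Next I would build the conjugacy. With these data, Section~\ref{section:Differentiable rigidity} produces diffeomorphisms $h_n=g_1\circ\dots\circ g_n$ (with $g_n=P_{n-1}^{-1}\circ U_n\circ P_{n-1}$) satisfying $P_n^*X_n=\lambda_n h_n^*X$, and I would first check the hypothesis $\sum_n\Theta_n/(R_{n-1}-d^{\frac{s-1}{s}}R_n)^s<\infty$ of Theorem~\ref{th: top conjugacy}: by Lemmas~\ref{mcf estimates},~\ref{mcf estimates 2},~\ref{lem:Rn} and $\Theta_n\lesssim|P_{n-1}^{-1}|\sigma_{n-1}$ this reduces, as in the $\ell=0$ case of the proof of Proposition~\ref{lem:Cl conjugacy}, to a summability estimate valid once $t_{n+1}/t_n\to\infty$ — which is why one takes $t_n$ rapidly spaced rather than equal to $\{\tau_n\}$ — whence $h:=\lim_n h_n$ exists, is a homeomorphism, and $\phi_X^t\circ h=h\circ\phi_\omega^t$. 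For the Gevrey regularity I would run the same construction, but for each fixed $\ell\in\Nn$ with data additionally satisfying $t_{n+1}\ge 5(\ell+1)t_n$: Proposition~\ref{lem:Cl conjugacy} and Theorem~\ref{thm: gevrey conjugacy} then give a conjugacy $h^{(\ell)}$ that is $C^\ell$ with $\|\partial^\alpha(h^{(\ell)}-\id)\|_{C^0}\le C'\alpha!^s\eta^{2s\ell}$ for $|\alpha|=\ell$, where $C',\eta$ do not depend on $\ell$. Finally, since $\phi_\omega^t$ is a minimal translation, its self-conjugacies are exactly the torus translations, so any two conjugacies of $\phi_X^t$ to $\phi_\omega^t$ differ by such a translation, under which $\|\partial^\alpha(\,\cdot\,-\id)\|_{C^0}$ is unchanged whenever $|\alpha|\ge1$; hence the conjugacy $h$ (normalized once and for all) satisfies $\|\partial^\alpha(h-\id)\|_{C^0}=\|\partial^\alpha(h^{(|\alpha|)}-\id)\|_{C^0}\le C'\alpha!^s\eta^{2s|\alpha|}$ for every $\alpha$, so $h-\id\in\CC_{s,\eta^{-2}}$ and $h$ is the desired $s$-Gevrey conjugacy.

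The crux — and where the quantitative $s$-Brjuno condition is really used — is the simultaneous calibration of the renormalization data: for each $\ell$ the sequence $t_n$ (and the $\ell$-dependent auxiliary parameters) must at once keep $\lim_n\BB_n<\rho_0$, so $X$ never leaves the domain of a renormalization step; contract fast enough that $\sum_n\Theta_n/(R_{n-1}-d^{\frac{s-1}{s}}R_n)^s<\infty$; and satisfy $t_{n+1}\ge 5(\ell+1)t_n$, for the Gevrey bound of Proposition~\ref{lem:Cl conjugacy} with constants uniform in $\ell$. The statement $B_3(s)<\infty$ is exactly what permits accelerating $t_n$ this much while the geometric gain $e^{-\Delta(t_n)/s}$, coming from the Diophantine quality of $\omega$ at scale $e^{\Delta(t_n)}$, still dominates $t_{n+1}$; turning this into quantitative bounds, and combining it with the uniqueness argument that decouples the order $\ell$ from the choice of sequence, is the heart of the proof.
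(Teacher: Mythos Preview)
Your proposal is correct and follows the same route as the paper: run the renormalization of Section~\ref{section:Renormalization} on the continued-fractions data of $\omega$, invoke Theorems~\ref{th: top conjugacy} and~\ref{thm: gevrey conjugacy} via Proposition~\ref{lem:Cl conjugacy} for each $\ell$, and use uniqueness of the conjugacy up to translation to pass from the $\ell$-indexed family $h^{(\ell)}$ to a single $s$-Gevrey $h$.

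Two technical points are handled slightly differently in the paper and are worth recording. First, rather than a ``rapidly spaced subsequence of $\{\tau_n\}$'', the paper keeps $t_n=\tau_n$ and for each $\ell$ passes to the explicit modified sequence $\tilde t_1=t_1$, $\tilde t_{n+1}=\max\{5(\ell+1)\tilde t_n,\,t_{n+1}\}$ (generally \emph{not} stopping times); the summability~\eqref{eq defn sum t_n} for $\tilde t_n$ is then checked by a two-case argument: if $\tilde t_{n+1}=t_{n+1}$ use $\Delta_n\le\Delta(\tilde t_n)$, and if $\tilde t_{n+1}=5(\ell+1)\tilde t_n$ bound the term by a constant times $e^{-\Delta_{m_n}/s}t_{m_n+1}$ with $m_n=\max\{j:t_j\le\tilde t_n\}$. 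This makes the ``simultaneous calibration'' you identify as the crux completely concrete, whereas for a genuine subsequence of stopping times the required bound on $e^{-(\tau_{m_n}-W(\tau_{m_n}))/s}\tau_{m_{n+1}}$ goes in the wrong direction and needs extra work. Second, you simply assume $\rho_0>\lim_n\BB_n$; the paper secures this by the finite-iteration device of \cite[Theorem~8.1]{jld10}: apply $\RR_1,\dots,\RR_N$ first so that $\rho_N>\BB(\omega_N)$, then restart the whole argument from $X_N$.
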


\begin{proof}
Denote by $\BB(\omega)$ the limit of $\BB_n$ for a vector $\omega$.
As in \cite[Theorem~8.1]{jld10} we can iterate the renormalization operator a finite number of steps $N\geq1$ to get $\rho_N>\BB(\omega_N)$. Therefore, we assume from the very beginning that $\rho_0>\BB(\omega)$. Notice that $\BB(\omega)<\infty$ by Lemma~\ref{condition W renormalization}. We now apply Theorem~\ref{convergence Rn} with $\sigma_n$ defined in \eqref{def:sigma} and $\theta_n=1$ to conclude that $X$ is infinitely renormalizable. The value of $\varepsilon_0$ is given by \eqref{eq:espilon}.

Define the following modified sequence. Let $\tilde{t}_1=t_1$ and
$$
\tilde{t}_{n+1}=\max\left\{5(\ell+1) \tilde{t}_n,t_{n+1}\right\},
\quad n\geq1.
$$
Notice that $\tilde{t}_n\geq t_n$ and $\tilde{t}_n$ satisfies the assumption of Proposition~\ref{lem:Cl conjugacy}. Moreover, if $\tilde{t}_{n+1}=t_{n+1}$, then
$$
e^{-\frac1s\Delta(\tilde{t}_n)}\tilde{t}_{n+1}\leq e^{-\frac1s\Delta_n}t_{n+1},
$$
since $\Delta_n\leq \Delta(\tilde{t}_n)$.
On other hand, if $\tilde{t}_{n+1}=5(\ell+1) \tilde{t}_n\geq t_{n+1}$, then
$$
e^{-\frac1s\Delta(\tilde{t}_n)}\tilde{t}_{n+1}\leq c\, e^{-\frac1s\Delta(\tilde{t}_n)}\tilde{t}_n\leq c\, e^{-\frac1s\Delta_{m_n}}t_{m_n+1},
$$
where $m_n=\max\{j\in\Nn_0\colon t_j\leq \tilde{t}_n\}$ and $c>0$ is a constant independent of $n$. Notice that $m_n\geq n$ and $m_n\to\infty$ since $\tilde{t}_n$ is unbounded. In either case, we conclude that
$$
\sum_{n=0}^\infty e^{-\frac1s\Delta(\tilde{t}_n)}\tilde{t}_{n+1}<\infty.
$$
So, both the hypothesis of Theorem~\ref{convergence Rn} and Proposition~\ref{lem:Cl conjugacy} hold using the modified sequence $\tilde{t}_n$. By Theorems~\ref{th: top conjugacy} and \ref{thm: gevrey conjugacy}, the vector field $X$ is $C^\ell$ conjugated to the constant vector field $\omega$. The conjugacy $h$ has $s$-Gevrey estimates \eqref{eq:Gevrey h}. Since $\ell\in\Nn$ is arbitrary and the conjugacy $h$ is unique up to a composition with a translation, we conclude that $h$ is $s$-Gevrey smooth. 
\end{proof}


\appendix
\section{}
\label{appendixC}
\subsection{Proof of Theorem~\ref{thm unif}}
Define
$$
\delta:=\frac{8(C_\nu-1)}{\sigma}\varepsilon <\frac12.
$$ 
Let $X=\vecw + f$ where $f\in \FF'_{\rho}$. 
We seek a coordinate transformation $U=\id +u$ where $u$ belongs to
$$
\BB_\delta=\{u\in \Ii^-_\sigma\FF'_{\rho'}\colon \|u\|'_{\rho'}<\delta\}.
$$
Notice that
$$
U^*X=(I+Du)^{-1}(\vecw+f\circ (\id + u))\,.
$$
Since 
\begin{equation*}
\begin{split}
\frac{\left(\rho'+d^{\frac{s-1}{s}}(\rho''+\nu)\right)^s}{2^sd^{s-1}}-(\rho''+\nu)^s&\geq \frac{1}{\beta^s}\left(\left(\rho'-\frac\nu2\right)^s-(\rho'-\nu)^s\right)\\
&\geq \frac{\nu^s}{(2\beta)^s},
\end{split}
\end{equation*}
and $\delta\leq\frac{\nu^s}{(2\beta)^s}$, Proposition~\ref{prop:norm estimates} implies that we have a well defined operator $\GG\colon\BB_\delta\to \Ii^-_\sigma\FF_{\rho''}$ given by,
$$
\GG(u):= \Ii^-_\sigma(I+Du)^{-1}(\vecw+f\circ (\id + u)).
$$
Notice that, $\GG(0)=\Ii^-_\sigma X\in\Ii^-_\sigma\FF'_{\rho}$.

We want to find $u\in\BB_\delta$ such that $\GG(u)=0$. We solve this problem using a homotopy, i.e. we will look for a smooth family $u_t\colon[0,1]\to\BB_\delta$ satisfying the equation,
$$
\GG(u_t)=(1-t)\GG(0).
$$
Differentiating with respect to $t$ we conclude that $u_t$ has to satisfy the differential equation
$$
D\GG(u_t)\frac{du_t}{dt}=-\GG(0)\,.
$$
In order to solve this differential equation we invert $D\GG(u_t)$. The following lemmas provide the necessary estimates.

\begin{lemma}
If $u\in\BB_\delta$, then the derivative of $\GG$ at $u$ is a linear operator $D\GG(u)\colon\Ii^-_\sigma\FF'_{\rho'}\to \Ii^-_\sigma\FF_{\rho''}$ defined by
\begin{equation}\label{eq:DF}
h\mapsto\Ii^-_\sigma(I+Du)^{-1}\left[(Df)\circ U h-Dh(I+Du)^{-1}(\vecw+f\circ U)\right].
\end{equation}
\end{lemma}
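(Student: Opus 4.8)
The plan is to obtain \eqref{eq:DF} by differentiating the three factors of $\GG(u)=\Ii^-_\sigma(I+Du)^{-1}(\vecw+f\circ U)$, with $U=\id+u$, via the Leibniz rule combined with the chain rule. First I would record the auxiliary differentiabilities. The map $u\mapsto Du$ is a bounded linear map from $\Ii^-_\sigma\FF'_{\rho'}$ into the Banach algebra $\mathrm{Mat}_d(\FF_{\rho'})$ of $d\times d$ matrices with entries in $\FF_{\rho'}$, of norm at most $1$; since $\|Du\|_{\rho'}\le\|u\|'_{\rho'}<\delta<\tfrac12$, the element $I+Du$ is invertible in that algebra by a Neumann series, $(I+Du)^{-1}=\sum_{k\ge0}(-Du)^k$, with norm $\le(1-\delta)^{-1}<2$. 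Inversion $A\mapsto A^{-1}$ being analytic on invertible elements of a Banach algebra with derivative $B\mapsto-A^{-1}BA^{-1}$, the map $u\mapsto(I+Du)^{-1}$ is Fréchet differentiable with derivative $h\mapsto-(I+Du)^{-1}Dh\,(I+Du)^{-1}$. For the composition factor, Proposition~\ref{prop:norm estimates} together with Lemma~\ref{lem:norm estimates} shows that $u\mapsto f\circ U$ is a well-defined $C^1$ map from $\BB_\delta$ into $\FF_{\rho''}$ with derivative $h\mapsto\big((Df)\circ U\big)h$, the first-order remainder being controlled by estimate (3) of Proposition~\ref{prop:norm estimates}, while the Cauchy estimate Lemma~\ref{lem:Cauchy} lets $Dh$ be paired with quantities composed with $U$ at the reduced radius.

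Assembling these by the product rule, the derivative of $(I+Du)^{-1}(\vecw+f\circ U)$ in the direction $h$ is
$$
-(I+Du)^{-1}Dh\,(I+Du)^{-1}(\vecw+f\circ U)+(I+Du)^{-1}\big((Df)\circ U\big)h,
$$
and applying the continuous Fourier projection $\Ii^-_\sigma$, which acts diagonally on Fourier modes and hence commutes with $D$, yields exactly \eqref{eq:DF}. It then remains to check that $D\GG(u)$ maps $\Ii^-_\sigma\FF'_{\rho'}$ boundedly into $\Ii^-_\sigma\FF_{\rho''}$: the term $\big((Df)\circ U\big)h$ is handled by $\|Df\circ U\|_{\rho''}\le C_\nu\|f\|'_{\rho}$ from Proposition~\ref{prop:norm estimates}(2) and the Banach-algebra property, while in the second term $\|Dh\|_{\rho'}\le\|h\|'_{\rho'}$, $\|(I+Du)^{-1}\|\le 2$, and $\|\vecw+f\circ U\|_{\rho''}\le|\vecw|+C_\nu\|f\|_{\rho}$; the radius drops $\rho\to\rho'\to\rho''$ are precisely those allowed by the definitions of $\rho'$ and $\rho''$ under the smallness $\|u\|'_{\rho'}<\delta$ fixed at the start of the appendix.

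The hard part will not be the formal computation but the bookkeeping of the analyticity radii and the verification that the formal expression is the genuine Fréchet derivative. Concretely, one must confirm that each of the three operations — multiplication by $(I+Du)^{-1}$, the composition $f\circ U$, and differentiation — costs no more radius than the chain of inequalities $\rho\ge\beta\rho'+\nu$ and $\rho'=\beta(\rho''+\nu)+\nu$ built into $\rho'$ and $\rho''$, and that the remainders are $o(\|h\|'_{\rho'})$ uniformly on $\BB_\delta$; both follow from the Lipschitz and quadratic estimates of Proposition~\ref{prop:norm estimates} and Lemma~\ref{lem:norm estimates}, together with the fact that $\|Du\|_{\rho'}<\tfrac12$ keeps $I+Du$ uniformly invertible on $\BB_\delta$.
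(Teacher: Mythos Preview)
Your proposal is correct and follows essentially the same approach as the paper: the paper simply cites \cite[Lemma 9.2]{jld} for the derivative computation and invokes Proposition~\ref{prop:norm estimates} for the mapping property, whereas you spell out the same product-rule/chain-rule computation explicitly. The one superfluous remark is that $\Ii^-_\sigma$ ``commutes with $D$''; no such commutation is needed, since $\Ii^-_\sigma$ is a bounded linear operator applied last and the chain rule gives $D(\Ii^-_\sigma\circ G)=\Ii^-_\sigma\circ DG$ directly.
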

\begin{proof}
See~\cite[Lemma 9.2]{jld} for the computation of the derivative. To see that $D\GG(u)h\in\Ii^-_\sigma\FF_{\rho''}$ for any $h\in \Ii^-_\sigma\FF'_{\rho'}$ just apply Proposition~\ref{prop:norm estimates}.
\end{proof}

\begin{lemma}\label{lem:DF0} If $\|f\|'_{\rho}<\varepsilon$, then $D\GG(0)^{-1}$ is a bounded linear operator from $\Ii^-_\sigma\FF_{\rho}$ to $\Ii^-_\sigma\FF'_{\rho'}$. Moreover 
$$
\|D\GG(0)^{-1}\|<\frac{4(C_\nu-1)}{\sigma}.
$$
\end{lemma}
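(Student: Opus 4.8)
The plan is a perturbative inversion: split off from $D\GG(0)$ the part produced by the constant drift $\vecw$, show it is boundedly invertible on the far from resonance modes (where the small-divisor bound enters), and treat the rest by a Neumann series.

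From the formula for $D\GG(u)$ in the previous lemma, at $u=0$,
$$
D\GG(0)h=\Ii_\sigma^-\bigl[(Df)h-Dh(\vecw+f)\bigr]=Lh+Kh,
$$
with $Lh:=-Dh\cdot\vecw$ and $Kh:=\Ii_\sigma^-\bigl[(Df)h-Dh\cdot f\bigr]$; since $-Dh\cdot\vecw$ already carries only far from resonance modes when $h$ does, $L$ maps $\Ii_\sigma^-\FF'_{\rho'}$ into $\Ii_\sigma^-\FF_{\rho'}$. On the $\veck$-th Fourier coefficient $L$ acts by multiplication by a scalar of modulus $|\veck\cdot\vecw|$, and for $\veck\in I_\sigma^-$ the definition of that set forces $|\veck\cdot\vecw|>\sigma|\veck|\geq\sigma>0$ (in particular $0\notin I_\sigma^-$). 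Hence $L$ is invertible; moreover, because
$$
\frac{1+|\veck|}{|\veck\cdot\vecw|}<\frac{1+|\veck|}{\sigma|\veck|}\leq\frac{2}{\sigma},\qquad \veck\in I_\sigma^-,
$$
the inverse gains a derivative: $L^{-1}\colon\Ii_\sigma^-\FF_r\to\Ii_\sigma^-\FF'_r$ is bounded with $\|L^{-1}\|\leq 2/\sigma$ for every $0<r\leq\rho$. This is the only place the cut-off $\Ii_\sigma^-$ and the arithmetic of $\vecw$ are used, and I would regard it as the structural core of the proof, even though the estimate itself is short.

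For the remainder, the Banach-algebra property of $\FF_{\rho'}$ gives $\|(Df)h\|_{\rho'}\leq\|Df\|_\rho\|h\|'_{\rho'}$ and $\|Dh\cdot f\|_{\rho'}\leq\|Dh\|_{\rho'}\|f\|_\rho\leq\|h\|'_{\rho'}\|f\|'_\rho$, hence $\|Kh\|_{\rho'}\leq 2\|f\|'_\rho\|h\|'_{\rho'}<2\varepsilon\|h\|'_{\rho'}$ and so $\|L^{-1}K\|\leq 4\varepsilon/\sigma$ as an operator on $\Ii_\sigma^-\FF'_{\rho'}$. By the choice of $\varepsilon$ in \eqref{eq:espilon}, which makes $8(C_\nu-1)\varepsilon/\sigma<1/2$, one has $\varepsilon<\sigma/(16(C_\nu-1))$, so $\|L^{-1}K\|<1$ and $D\GG(0)=L(\Ii+L^{-1}K)$ is invertible with
$$
D\GG(0)^{-1}=(\Ii+L^{-1}K)^{-1}L^{-1}\colon\Ii_\sigma^-\FF_\rho\to\Ii_\sigma^-\FF'_{\rho'},\qquad \|D\GG(0)^{-1}\|\leq\frac{2/\sigma}{1-4\varepsilon/\sigma}.
$$
Inserting the bound on $\varepsilon$ then yields $\|D\GG(0)^{-1}\|<4(C_\nu-1)/\sigma$. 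I expect the only fussy point to be this last estimate: pinning the constant down to exactly $4(C_\nu-1)/\sigma$ uses that $C_\nu$ is not too close to $1$ (for instance $C_\nu\geq 2$ is comfortably enough), which can always be arranged by taking $\nu$ sufficiently small since $C_\nu\to\infty$ as $\nu\to 0^+$; the rest is the routine propagation of the constants from \eqref{eq:espilon}.
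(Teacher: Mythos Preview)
Your argument is correct and follows the same route as the paper: split $D\GG(0)$ into the directional derivative $-D_{\vecw}$ and the Lie-derivative term $\LL_f$, invert $-D_{\vecw}$ on far-from-resonance modes via the small-divisor bound, and handle the remainder by a Neumann series. The only cosmetic difference is where the factor $C_\nu-1$ enters: the paper inserts it already in the bound $\|D_{\vecw}^{-1}\|\leq 2(C_\nu-1)/\sigma$ (using the crude pointwise estimate $|g_k|\leq\|g\|_\rho e^{-\rho|k|^{1/s}}$ and summing $\sum_{k\neq0}e^{-\nu|k|^{1/s}}=C_\nu-1$), so the final constant $4(C_\nu-1)/\sigma$ drops out directly, whereas you get the sharper $\|L^{-1}\|\leq 2/\sigma$ and must absorb the extra room at the end via $C_\nu\geq2$, which is indeed implicit throughout the paper.
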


\begin{proof}
Let $\LL_fh=Df\,h-Dh\,f$ and $D_\vecw h=Dh\,\vecw$. Then
$$
D\GG(0)h=\Ii^-_\sigma(\LL_f-D_\vecw)h.
$$
We wish to invert $D\GG(0)$ on $\Ii^-_\sigma\FF_{\rho}$, i.e. on elements in $\FF_\rho$ having only far from resonant modes. Formally,
$$
D\GG(0)^{-1}=(\Ii^-_\sigma(\LL_f-D_\vecw))^{-1}=D_\vecw^{-1}(\Ii_\sigma^-\LL_f D_\vecw^{-1}-\Ii)^{-1}.
$$
The inverse of $D_\vecw$ is a bounded linear operator from $\Ii^-_\sigma\FF_{\rho}$ to $\Ii^-_\sigma\FF'_{\rho'}$. Indeed, given $g\in \Ii^-_\sigma\FF_{\rho}$,
\begin{align*}
\|(D_\vecw)^{-1}g\|'_{\rho'}&=\sum_{\veck\in I^-_\sigma}\frac{1+|\veck|}{|\veck\cdot\vecw|}|g_\veck|e^{\rho'|\veck|^{1/s}}\\
&\leq \sum_{\veck\in I^-_\sigma}\frac{1+|\veck|}{\sigma|\veck|}|g_\veck|e^{(\rho-\nu)|\veck|^{1/s}}\\
&\leq \frac{2(C_\nu-1)}{\sigma} \|g\|_{\rho}\,.
\end{align*}
Moreover, $\LL_f$ is a bounded linear operator from $\Ii^-_\sigma\FF'_{\rho'}$ to $\FF_{\rho'}$,
$$
\|\LL_f h\|_{\rho'}\leq \|Df\,h\|_{\rho'}+\|Dh\,f\|_{\rho'}\leq 2\|f\|'_{\rho} \|h\|'_{\rho'}.
$$
Thus, 
$$
\|\Ii^-_\sigma\LL_f D_\vecw^{-1}\|\leq \frac{4(C_\nu-1)}{\sigma}\|f\|'_{\rho} <\frac{1}{2},
$$ 
since $\|f\|'_\rho<\varepsilon<\frac{\sigma}{8(C_\nu-1)}$. Hence, 
$$
\|D\GG(0)^{-1}\|\leq\frac{\|D_\vecw^{-1}\|}{1-\|\Ii_\sigma^-\LL_f D_\vecw^{-1}\|}< \frac{4(C_\nu-1)}{\sigma} .
$$
\end{proof}

\begin{lemma}\label{lem:DFu0}
If $u\in\BB_\delta$ and $\|f\|'_\rho<\varepsilon$, then the linear operator $D\GG(u)-D\GG(0)$ mapping $\Ii_\sigma^-\FF'_{\rho'}$ to $\Ii_\sigma^-\FF_{\rho''}$ is bounded and
$$
\|D\GG(u)-D\GG(0)\|< \frac{\delta|\vecw| C_\nu}{C_\nu-1}\left(\frac{2^s}{\nu^s}+7\right)
$$
\end{lemma}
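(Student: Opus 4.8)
The plan is to start from the explicit formula \eqref{eq:DF} for $D\GG(u)$ and from its value $D\GG(0)h=\Ii^-_\sigma[(Df)h-Dh(\vecw+f)]$ obtained in the proof of Lemma~\ref{lem:DF0}, and to expand the difference $D\GG(u)-D\GG(0)$ so that every summand carries a manifestly small factor, namely $Df\circ U-Df$, $f\circ U-f$, or $(I+Du)^{-1}-I$. Set $V:=(I+Du)^{-1}$; since $\|Du\|_{\rho'}\le\|u\|'_{\rho'}<\delta<\tfrac12$ (and likewise in the $\FF_{\rho''}$ operator norm), the Neumann series converges with $\|V\|<2$, and writing $W:=V-I=-Du\,V$ one gets $\|W\|\le\delta/(1-\delta)<2\delta$. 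Substituting $V=I+W$ into \eqref{eq:DF}, expanding both occurrences of $I+W$, and subtracting $D\GG(0)h$ gives
\begin{align*}
D\GG(u)h-D\GG(0)h
={}& \Ii^-_\sigma\big[(Df\circ U-Df)h\big]-\Ii^-_\sigma\big[Dh\,(f\circ U-f)\big] \\
&-\Ii^-_\sigma\big[Dh\,W(\vecw+f\circ U)\big]+\Ii^-_\sigma W\big[(Df\circ U)h-Dh\,V(\vecw+f\circ U)\big].
\end{align*}

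The key step is to bound each of these four terms in $\|\cdot\|_{\rho''}$ by a constant times $\|h\|'_{\rho'}$, using $\|\Ii^-_\sigma\|\le1$, the Banach algebra property of $\FF_{\rho''}$ (so $\|Dh\,g\|_{\rho''}\le\|Dh\|_{\rho''}\|g\|_{\rho''}\le\|h\|'_{\rho'}\|g\|_{\rho''}$, and similarly for the matrix products with $V$ and $W$), and Proposition~\ref{prop:norm estimates}. Explicitly, the first term is controlled via $\|Df\circ U-Df\|_{\rho''}\le\frac{2^sC_\nu}{\nu^s}\|f\|'_\rho\|u\|_{\rho'}$; the second via $\|f\circ U-f\|_{\rho''}\le C_\nu\|f\|'_\rho\|u\|_{\rho'}$; and in the last two one also invokes $\|\vecw+f\circ U\|_{\rho''}\le|\vecw|+C_\nu\|f\|_\rho$ (item (1)) and $\|Df\circ U\|_{\rho''}\le C_\nu\|f\|'_\rho$ (item (2)), together with $\|V\|<2$ and $\|W\|<2\delta$. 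All the smallness hypotheses on $u$ required by Proposition~\ref{prop:norm estimates} hold because $\|u\|_{\rho'}<\delta\le\nu^s/(2\beta)^s$, the inequality already noted above.

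Collecting the four estimates and dividing by $\|h\|'_{\rho'}$, then inserting $\|f\|'_\rho<\varepsilon$, $\|f\|_\rho<\varepsilon$ and $\|u\|_{\rho'}<\delta$, one arrives at a bound of the form $\delta\big[\tfrac{2^sC_\nu}{\nu^s}\varepsilon+c\,C_\nu\varepsilon+6|\vecw|\big]$ with $c$ a small numerical constant, where $6\delta|\vecw|$ comes from the two terms carrying $W$ and everything else is of order $\delta\varepsilon$. The last step is to absorb the $O(\delta\varepsilon)$ part: from the definition \eqref{eq:espilon} of $\varepsilon$ together with $\sigma<|\vecw|$ and $\delta<\tfrac12$ one has $C_\nu\varepsilon\,(\tfrac{2^s}{\nu^s}+7)$ much smaller than $|\vecw|$, so that $\tfrac{2^sC_\nu}{\nu^s}\varepsilon+c\,C_\nu\varepsilon+6|\vecw|<7|\vecw|\le\frac{|\vecw|C_\nu}{C_\nu-1}\big(\tfrac{2^s}{\nu^s}+7\big)$, which is the asserted estimate.

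The part I expect to be most delicate is not any single inequality but the bookkeeping: keeping track of which radius among $\rho,\rho',\rho''$ (and which of the primed/unprimed norms) is in force at each composition and product, checking each time that the smallness condition on $u$ demanded by Proposition~\ref{prop:norm estimates} is met, and being careful enough with the numerical constants that the final sum genuinely fits under $\frac{|\vecw|C_\nu}{C_\nu-1}(\frac{2^s}{\nu^s}+7)$ rather than a slightly larger constant.
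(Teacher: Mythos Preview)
Your approach is essentially the paper's: decompose $D\GG(u)-D\GG(0)$ so that every summand carries a factor of $u$ (or $W$), then estimate each piece with Proposition~\ref{prop:norm estimates}. The paper uses a three-term grouping obtained by factoring out $(I+Du)^{-1}$ on the left, namely $\Ii^-_\sigma(I+Du)^{-1}(A_1+A_2+A_3)$ with $A_1=(Df\circ U-Df-Du\,Df)h$, $A_2=Du\,Dh(\vecw+f)$, $A_3=-Dh(I+Du)^{-1}(f\circ U-f-Du(\vecw+f))$, whereas you expand $V=I+W$ into four terms; both decompositions are correct and lead to comparable bounds.

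There is one slip in your final absorption step. From \eqref{eq:espilon} one only gets $C_\nu\varepsilon\big(\tfrac{2^s}{\nu^s}+7\big)\le \tfrac{\sigma^2}{64(C_\nu-1)|\vecw|}<\tfrac{|\vecw|}{64(C_\nu-1)}$, which is \emph{not} bounded by $|\vecw|$ when $C_\nu$ is close to $1$; hence your intermediate inequality ``$<7|\vecw|$'' is not justified. The fix is simply to skip that step and compare directly with the target: using $\varepsilon<\tfrac{\sigma}{8(C_\nu-1)}<\tfrac{|\vecw|}{8(C_\nu-1)}$ to convert every $\varepsilon$ into $\tfrac{|\vecw|}{8(C_\nu-1)}$ (as the paper does), one arrives at a bound of the form $\tfrac{\delta|\vecw|C_\nu}{4(C_\nu-1)}\big(\tfrac{2^s}{\nu^s}+c\big)$ for a moderate numerical $c$, which is indeed $\le\tfrac{\delta|\vecw|C_\nu}{C_\nu-1}\big(\tfrac{2^s}{\nu^s}+7\big)$.
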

\begin{proof}
According to \eqref{eq:DF} we can write
$$
\left(D\GG(u)-D\GG(0)\right)h=\Ii_\sigma^-(I+Du)^{-1}\left(A_1+A_2+A_3\right),
$$
where 
\begin{align*}
A_1&=\left(Df\circ(\id+u)-Df-Du\,Df\right)h,\\
A_2&=Du\,Dh(\vecw+f),\\
A_3&=-Dh(I+Du)^{-1}\left(f\circ(\id+u)-f-Du(\vecw+f)\right).
\end{align*}
It follows from Proposition~\ref{prop:norm estimates} that,
\begin{align*}
\|A_1\|_{\rho''}&\leq \left(\frac{2^s C_\nu}{\nu^s}+1\right)\|f\|'_{\rho}\|u\|'_{\rho'}\|h\|'_{\rho'},\\
\|A_2\|_{\rho''}&\leq (|\vecw| + \|f\|'_{\rho})\|u\|'_{\rho'}\|h\|'_{\rho'},\\
\|A_3\|_{\rho''}&\leq  \frac{\|u\|'_{\rho'}}{1-\|u\|'_{\rho'}}\left[|\vecw|+ \left(1 +  C_\nu\right)\|f\|'_{\rho}\right]\|h\|'_{\rho'}.
\end{align*}
Taking into account that $\|u\|'_{\rho'}<\delta< 1/2$, $\|f\|'_\rho<\varepsilon<\frac{\sigma}{8(C_\nu-1)}$ and $0<\sigma<|\vecw|$ we get,
$$
\|D\GG(u)-D\GG(0)\|< \frac{|\vecw|\delta C_\nu}{4(C_\nu-1)}\left(\frac{2^s}{\nu^s}+26\right)
$$
which gives the final estimate.
\end{proof}

\begin{lemma}\label{lem:DFu}
If $u\in\BB_\delta$ and $\|f\|'_\rho<\varepsilon$, then $D\GG(u)^{-1}$ is a bounded linear operator from $\Ii_\sigma^-\FF_{\rho}$ to $\Ii_\sigma^-\FF'_{\rho'}$. Moreover,
\begin{align*}
\|D\GG(u)^{-1}\|<\frac\delta\varepsilon.
\end{align*}
\end{lemma}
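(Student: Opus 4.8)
The plan is to invert $D\GG(u)$ by a Neumann series perturbation of $D\GG(0)$, whose inverse is already under control by Lemma~\ref{lem:DF0}. Writing
$$
D\GG(u)=D\GG(0)\bigl(\Ii+D\GG(0)^{-1}(D\GG(u)-D\GG(0))\bigr),
$$
it suffices to verify that $\|D\GG(0)^{-1}\|\,\|D\GG(u)-D\GG(0)\|<1$; if this holds, then $D\GG(u)^{-1}$ exists as a bounded operator from $\Ii_\sigma^-\FF_{\rho}$ to $\Ii_\sigma^-\FF'_{\rho'}$ and, by the usual Neumann estimate,
$$
\|D\GG(u)^{-1}\|\le\frac{\|D\GG(0)^{-1}\|}{1-\|D\GG(0)^{-1}\|\,\|D\GG(u)-D\GG(0)\|}.
$$

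First I would combine the two bounds already established: Lemma~\ref{lem:DF0} gives $\|D\GG(0)^{-1}\|<4(C_\nu-1)/\sigma$, and Lemma~\ref{lem:DFu0} gives $\|D\GG(u)-D\GG(0)\|<\frac{\delta|\vecw|C_\nu}{C_\nu-1}\bigl(\frac{2^s}{\nu^s}+7\bigr)$, so their product is at most $\frac{4\delta|\vecw|C_\nu}{\sigma}\bigl(\frac{2^s}{\nu^s}+7\bigr)$. Then I would feed in the precise choice of $\varepsilon$: since $\delta=\frac{8(C_\nu-1)}{\sigma}\varepsilon$ and, by \eqref{eq:espilon}, $\varepsilon\le\frac{\sigma}{8(C_\nu-1)}\cdot\frac{\sigma}{8|\vecw|C_\nu}\bigl(\frac{2^s}{\nu^s}+7\bigr)^{-1}$, one gets $\delta\le\frac{\sigma}{8|\vecw|C_\nu}\bigl(\frac{2^s}{\nu^s}+7\bigr)^{-1}$, hence $\frac{4\delta|\vecw|C_\nu}{\sigma}\bigl(\frac{2^s}{\nu^s}+7\bigr)\le\frac12<1$. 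Plugging this into the displayed estimate, the denominator is at least $1/2$, so $\|D\GG(u)^{-1}\|\le 2\|D\GG(0)^{-1}\|<\frac{8(C_\nu-1)}{\sigma}=\frac{\delta}{\varepsilon}$, which is exactly the claimed bound.

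There is no genuine obstacle here: all the analytic content sits in Lemmas~\ref{lem:DF0} and~\ref{lem:DFu0} and in the definition of $\varepsilon$ in \eqref{eq:espilon}. The only point requiring care is the constant bookkeeping — one must check that the product of the two operator norms is driven below $1/2$ (not merely below $1$) precisely so that the factor $1/(1-q)$ contributes a clean $2$ and the final bound comes out as $\delta/\varepsilon$ rather than something slightly larger.
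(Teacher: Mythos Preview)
Your proposal is correct and follows essentially the same approach as the paper: a Neumann series perturbation of $D\GG(0)$, combining Lemmas~\ref{lem:DF0} and~\ref{lem:DFu0} with the choice of $\varepsilon$ in~\eqref{eq:espilon} to force the perturbation below $1/2$, then reading off $\|D\GG(u)^{-1}\|<2\|D\GG(0)^{-1}\|<8(C_\nu-1)/\sigma=\delta/\varepsilon$. The only cosmetic difference is that the paper factors as $D\GG(u)^{-1}=D\GG(0)^{-1}\bigl[I+(D\GG(u)-D\GG(0))D\GG(0)^{-1}\bigr]^{-1}$ rather than your left factorization, which is immaterial.
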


\begin{proof}
Notice that,
\begin{align*}
D\GG(u)^{-1}&=(D\GG(u)-D\GG(0)+D\GG(0))^{-1} \\
&=D\GG(0)^{-1}\left[I+(D\GG(u)-D\GG(0))D\GG(0)^{-1}\right]^{-1}.
\end{align*}
By Lemmas~\ref{lem:DF0} and \ref{lem:DFu0},
\begin{align*}
\|D\GG(u)-D\GG(0)\|\|D\GG(0)^{-1}\|&<\frac{4\delta|\vecw| C_\nu}{\sigma}\left(\frac{2^s}{\nu^s}+7\right)\\
&<\frac12,
\end{align*}

by our choice of $\delta$. Thus, again using Lemma~\ref{lem:DF0}
$$
\|D\GG(u)^{-1}\|<2\|D\GG(0)^{-1}\|<\frac{8(C_\nu-1)}{\sigma} =\frac{\delta}{\varepsilon}.
$$
\end{proof}

Now we conclude the proof of Theorem~\ref{thm unif}. Notice that,
$$
u_t=-\int_0^tD\GG(u_s)^{-1}\GG(0)\,ds.
$$
Since $\GG(0)\in\Ii_\sigma^-\FF_\rho$, it follows from Lemma~\ref{lem:DFu} that, 
\begin{equation}\label{norm of u}
\|u_t\|'_{\rho'}\leq t \sup_{u\in\BB_\delta}\|D\GG(u)^{-1}\|\|\GG(0)\|_\rho<\frac{8t(C_\nu-1)}{\sigma}\|\Ii_\sigma^- X\|_\rho.
\end{equation}
This implies that $u_t\in\BB_\delta$ for every $t\in[0,1]$. So $X\mapsto u_t$ defines an operator $\fU_t$ from $\VV_\varepsilon$ to $\Ii^-_\sigma\FF'_{\rho'}$ and $X\mapsto (\id+\fU_t(X))^*X$ defines another operator $\UU_t$ from $\VV_\epsilon$ to $(1-t)\Ii^-_\sigma\FF'_{\rho}\oplus t\Ii^+_\sigma\FF_{\rho''}$.
In addition, 
$$
\UU_t(\vecw+f)-\vecw=\Ii^+_\sigma f+(1-t)\Ii^-_\sigma f+\Ii^+_\sigma(A_1 +A_2+A_3)
$$
where 
\begin{align*}
A_1&=Df\,u_t-Du_tf-Du_tDf\,u_t,\\
A_2&=\left(I-Du_t\right)\left(f\circ(\id+u_t)-f-Df\,u_t\right),\\
A_3&=\sum_{n=2}^\infty(-Du_t)^n\left(\vecw+f\circ(\id+u_t)\right).
\end{align*}
Using \eqref{norm of u} and Proposition~\ref{prop:norm estimates} we get,
\begin{align*}
\|A_1\|_{\rho''}&\leq \frac{24t(C_\nu-1)}{\sigma}{\|f\|'_\rho}^2,\\
\|A_2\|_{\rho''}&\leq \frac{32tC_\nu(C_\nu-1)}{\sigma}{\|f\|'_\rho}^2,\\
\|A_3\|_{\rho''}&\leq \frac{2^7t|\vecw|(C_\nu-1)(2C_\nu-1)}{\sigma^2}{\|f\|'_\rho}^2.
\end{align*}
Therefore, $\UU_t$ is Fr\'echet differentiable at $\vecw$ with derivative $\Ii^+_\sigma f+(1-t)\Ii^-_\sigma f$ and the estimates in the statement follow immediately. This concludes the proof of Theorem~\ref{thm unif}.

\section*{Acknowledgements}

The authors were partially supported by the Project CEMAPRE - UID/MULTI/00491/2013 financed by FCT/MCTES through national funds.
JPG was also supported by the postdoctorial fellowship SFRH/BPD/78230/2011 funded by FCT/MCTES.

The authors are grateful for the comments and suggestions from participants in the `10th AIMS Conference on Dynamical Systems, Differential Equations and Applications'' (Madrid, July 2014) and ``The Dynamics of Complex Systems: A meeting in honour of the 60th birthday of Robert MacKay'' (University of Warwick, May 2016) where this work was presented.


\bibliographystyle{plain} 
\bibliography{rfrncs}

\end{document}